\documentclass[12pt,reqno]{amsart}

\usepackage{amsmath,amssymb,amsfonts,amsthm}

\usepackage{eurosym}
\usepackage{amsfonts}
\usepackage{amsmath,amssymb,amsfonts,mathtools,paralist}
\usepackage{amsfonts}
\usepackage{graphicx}
\usepackage{graphics}
\usepackage{url,hyperref}
\usepackage{xcolor}

\newtheorem{theorem}{Theorem}

\newtheorem{corollary}[theorem]{Corollary}

\newtheorem{definition}[theorem]{Definition}
\newtheorem{example}[theorem]{Example}

\newtheorem{notation}[theorem]{Notation}

\newtheorem{proposition}[theorem]{Proposition}
\newtheorem{remark}[theorem]{Remark}

\begin{document}

\title[Oscillator]{Sub-Riemannian and sub-Lorentzian geodesics \\
of the oscillator groups}
\thanks{This work was supported by Consejo Nacional de Investigaciones
Cient\'ificas y T\'ecnicas and Secre\-tar\'{\i}a de Ciencia y T\'ecnica de la
Universidad Nacional de C\'ordoba. The collaboration would have not been 
possible without the fellowship awarded to the second author by the IMU-Simons Research 
 Program for Developing Countries}

\author{Mauricio Godoy Molina and Marcos Salvai}
\date{}

\begin{abstract}
The oscillator groups are four-dimensional solvable Lie groups, extensions
of the Heisenberg Lie group. We present them as circle bundles of the
standard contact sub-Riemannian structure on $\mathbb{R}^{3}$. We define
sub-Riemannian and sub-Lorentzian structures on them, describe their
geodesics explicitly and determine which of them are periodic.

We also study a generalization: 
Let $G$ be a semisimple Lie group and $K$ a compact subgroup such that 
$\left( G,K\right) $ is a symmetric
pair of the compact or the noncompact type. Let $\mathfrak{g}$ and $%
\mathfrak{k}$ be the Lie algebras of $G$ and $K$,  respectively.
We consider on $\mathfrak{g}$
the usual nilpotent Lie group structure  with center $\mathfrak{k}$ 
and call it $N\left( \mathfrak{g},\mathfrak{k}\right)$.  
A suitable semidirect product with $\operatorname{Ad}\left( K\right) $
yields a solvable Lie group $\operatorname{Osc}\left( G,K\right) $, which
generalizes the oscillator groups and is also quadratic (that is, it
possesses bi-invariant metrics; in particular, a canonical one). We define
nonholonomic pseudo-Riemannian structures on it and find their geodesics explicitly.
In doing so, we obtain a result that may not be merely auxiliary: formulas
for the monoparametric subgroups of $\operatorname{Osc}\left( G,K\right) $ and for
the sub-Riemannian geodesics of $N\left( \mathfrak{g},\mathfrak{k}\right) $ with the standard
left-invariant distribution $\mathcal{D}$. Moreover, when $G/K$ is compact with
rank one, we realize $\operatorname{Osc}\left( G,K\right) $ as a Stiefel bundle of 
$\left( N\left( \mathfrak{g},\mathfrak{k}\right) ,\mathcal{D}\right) $, up to finite coverings.
\end{abstract}

\subjclass{53C17, 49N10, 53C22, 17B30}
\keywords{oscillator group, sub-Riemannian structure,
	sub-Lorentzian structure, geodesic, Heisenberg group, double extension of a
	Lie algebra, quadratic solvable Lie group, Stiefel bundle}

\maketitle

%
%

\section{Introduction}

Sub-Riemannian geometry is a branch of differential geometry close to
control theory. It deals with systems in which objects are allowed to move only
in certain directions, but can still reach any point in space. The
restriction in the displacements is given by a linear condition and the cost of
admissible motions is defined. %
(it is quadratic at the infinitesimal level).
It is usually an interesting problem to find the motions that locally
minimize the cost. For generalities on sub-Riemannian geometry see \cite%
{RM} and \cite{AgrachevLibro}. In the case of no restrictions, the
theory reduces to Riemannian geometry. The Lorentzian and,
more generally, pseudo-Riemannian counterparts have also been considered
worthy of study. In this case, there may be no cost to
minimize, but there are still distinguished curves, induced by a Hamiltonian
defined analogously as in the positive definite case.

The oscillator groups are four-dimensional nonnilpotent solvable Lie groups,
semi\-direct products of the Heisenberg group with a one-parameter group. They
were introduced in \cite{Streater} in 1967. They have the remarkable
property of possessing bi-invariant Lorentz metrics. As far a we know,
sub-Riemannian or sub-Lorentzian structures on this groups have been studied
only in \cite{BiggsNagy}, with other approach and different results.  
Concerning
these structures on general solvable Lie groups which are not nilpotent,
there are thorough studies in dimension three; see \cite{SachkovSE2,SachkovSol,LokuNnS,LinoGrama}. 
But aside from those papers, the literature on
the subject seems to be quite limited.

We present the oscillator groups $\operatorname{Osc}\left( \kappa \right) $ ($\kappa
=\pm 1$) as circle bundles of the canonical contact distributions on $%
\mathbb{R}^{3}$ with the natural metric (Proposition \ref{identificacion}).
This allows for visualization and may be useful to prompt new questions. We
define a family of sub-Riemannian or sub-Lorentzian structures on them and
compute their geodesics explicitly (Theorem \ref{subOsc}), providing new
applications of a well-established technique (valid under
certain conditions, with an ambient space with a bi-invariant
pseudo-Riemannian metric). We also determine which geodesics are periodic
(Proposition \ref{PropoPeriodic}).

In Section \ref{SectionHD} we study higher dimensional generalizations. Let $%
\left( G,K\right) $ be a semisimple symmetric pair, either of the compact or
the noncompact type, of any rank, with $K$ compact. We define the associated
oscillator group $\operatorname{Osc}\left( G,K\right) $ as an extension 
by $\operatorname{Ad}%
_{G}\left( K\right) $ of the nilpotent Lie group $N\left( \mathfrak{g},%
\mathfrak{k}\right) $, which is the Lie algebra $\mathfrak{g}$ of $G$
endowed with a natural multiplication with center $\mathfrak{k}$. This is
a particular case of the important method called double extension,
introduced in \cite{Medina, Favre}, which led to the
complete description of Lie algebras endowed with ad-invariant metrics. 
The groups  $\operatorname{Osc}\left( G,K\right) $ are very particular cases 
of the interesting class of symmetric spaces whose transvection groups are not semisimple; 
see \cite{Kath} and references therein.

Following the approach of presenting $\operatorname{Osc}\left( \kappa \right) $ as
circle bundles, for a rank-one classical pair $\left( G,K\right) $, we
identify $\operatorname{Osc}\left( K,G\right) $ with some Stiefel bundle on $N\left( 
\mathfrak{g},\mathfrak{k}\right) $ (Proposition \ref{PropStiefel}). Finally,
we define nonholonomic pseudo-Riemannian structures on $\operatorname{Osc}\left(
G,K\right) $ (of which the ones considered on $\operatorname{Osc}\left( \kappa
\right) $ are particular cases) and describe their geodesics (Theorem \ref%
{sub-semi}).

Auxiliary for this paper, but of interest in itself, is the explicit
computation of the sub-Riemannian geodesics of the Carnot group $N\left( 
\mathfrak{g},\mathfrak{k}\right) $ (Theorem \ref{geoSubN}). Moreover, we
realize them as projections of orbits of the exponential mapping of the
extended group $\operatorname{Osc}\left( G,K\right) $ (Theorem \ref{ThmGamma} and
Corollay \ref{Coro}). In the
particular case $\mathfrak{g}$ $=\mathfrak{o}\left( 1+m\right) $, $\mathfrak{k}=\mathfrak{o}\left(
m\right) $, the group $N\left( \mathfrak{g},\mathfrak{k}\right) $ is the
free, step-two Carnot group of rank $m$ and its sub-Riemannian geometry is
well understood, for instance in \cite{RizziCut}.  In Section \ref{HighNilpotent} below 
we give other references, containing formulas for the
geodesics, for $m$ small. Perhaps our general
approach provides a more conceptual perspective on the topic.

Regarding notation, we denote distributions on nilpotent groups with the
letter $\mathcal{D}$ and on the solvable extensions with the letter $%
\mathcal{E}$.

We would like to thank Patrick Massot for a picture of his that inspired 
Fig.\ \ref{fig1} and Ramiro Salvai for drawing the latter based on the former.

We are deeply grateful to the International Mathematical Union and the
Simons Foundation. Through the Research Fellowship Program for Developing
Countries they made the second author's stay at the Universidad de la
Frontera possible. He expresses his gratitude for the warm hospitality he
experienced there.

\section{The oscillator groups}

We fix $\kappa \in \left\{ 1,-1\right\} $ and consider on $\mathbb{R}^{3}$
the distribution $\mathcal{D}^{\kappa }=$ span$\left\{ \mathcal{X},\mathcal{Y%
}\right\} $, where the vector fields $\mathcal{X},\mathcal{Y}$ are%
\begin{equation}
\mathcal{X}\left( x,y,v\right) =\left( 1,0,-\tfrac{1}{2}\kappa y\right) 
\text{\ \ \ \ and\ \ \ }\mathcal{Y}\left( x,y,v\right) =\left( 0,1,\tfrac{1}{%
2}\kappa x\right)  \label{xieta}
\end{equation}%
(for the sake of simplicity, sometimes the dependence on $\kappa $ remains
tacit). This is one of the presentations of the canonical contact form on $%
\mathbb{R}^{3}$. As customary, for each $p\in \mathbb{R}^{3}$ we endow $%
\mathcal{D}_{p}^{\kappa }$ with the inner product and the orientation such that $\left\{ 
\mathcal{X}_{p},\mathcal{Y}_{p}\right\} $ is a positively oriented orthonormal basis
(equivalently, such that the canonical projection of $\mathcal{D}%
_{p}^{\kappa }$ onto $\mathbb{R}^{2}$ is a linear isometry 
preserving the orientation). Let $\mathcal{D}%
^{\kappa ,1}$ be the unit circle bundle%
\begin{equation}
\mathcal{D}^{\kappa ,1}=\left\{ \left( p,W\right) \mid p\in \mathbb{R}^{3}%
\text{, }W\in \mathcal{D}_{p}^{\kappa }\text{ with }\left\Vert W\right\Vert
=1\right\}  \label{UnitCB}
\end{equation}

Figure \ref{fig1} illustrates $\mathcal{D}^{\kappa ,1}$ in the case $%
\kappa =-1$; the fibers are round circles with respect to the inner product on the
distribution (we have compressed the vertical scale to make it look nicer).

\begin{figure}[h!t]
	\centering{
		\includegraphics[totalheight=2.7in,angle=0]
		{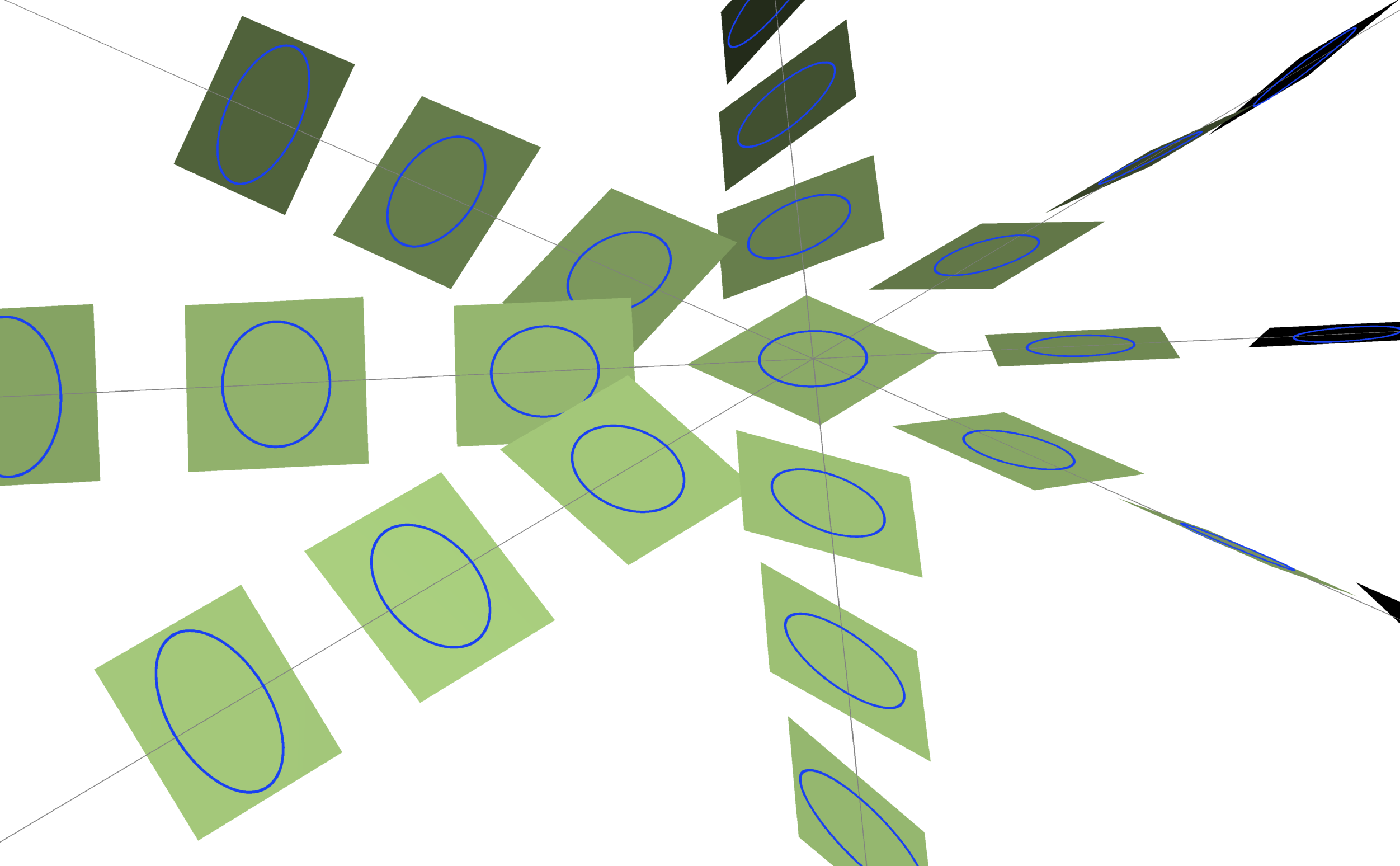}}
	\caption{The oscillator group as a circle bundle. (AI-generated image created with Claude, inspired by a drawing by Patrick Massot.)}
	\label{fig1}
\end{figure}

Let $\Phi _{\kappa }:\mathbb{R}^{4}\rightarrow \mathbb{R}^{4\times 4}$ be
defined by $\Phi _{\kappa }\left( u,x,y,v\right) =\left( 
\begin{array}{cc}
1 & 0 \\ 
a & A%
\end{array}%
\right) $, where $0$ is the row vector $\left( 0,0,0\right) $ and%
\begin{equation}
a=\left( 
\begin{array}{c}
x \\ 
y \\ 
v%
\end{array}%
\right) \text{,\ \ \ \ \ }A=\left( 
\begin{array}{ccc}
\cos u & -\sin u & 0 \\ 
\sin u & \cos u & 0 \\ 
\tfrac{1}{2}\kappa \left( x\sin u-y\cos u\right) & \tfrac{1}{2}\kappa \left(
x\cos u+y\sin u\right) & 1%
\end{array}%
\right) \text{.}  \label{aA}
\end{equation}

\begin{definition}
The image of $\Phi _{\kappa }$ is a Lie subgroup of $GL\left( 4,\mathbb{R}%
\right) $ called the $\kappa $-oscillator group, denoted by $\operatorname{Osc}%
\left( \kappa \right) .$
\end{definition}

\begin{proposition}
\label{identificacion}The group $\operatorname{Osc}\left( \kappa \right) $ may be
identified with $\mathcal{D}^{\kappa ,1}$ as the orbit of $\left(
0,e_{1}\right) $ under the following simply transitive action:%
\begin{equation*}
\left( 
\begin{array}{cc}
1 & 0 \\ 
a & A%
\end{array}%
\right) \cdot \left( p,W\right) =\left( a+Ap,AW\right) \text{.}
\end{equation*}
\end{proposition}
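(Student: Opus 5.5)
The plan is to verify four things in turn: the displayed formula defines an action on $\mathcal{D}^{\kappa,1}$; that action is free and transitive; its orbit map at $\left(0,e_{1}\right)$ is a diffeomorphism; and $\operatorname{Osc}(\kappa)$ is a group. The key observation is that $g=\Phi_\kappa(u,x,y,v)$ acts on $\mathbb{R}^3$ by the affine map $p\mapsto a+Ap$, and this map is a strict contactomorphism preserving $\mathcal{D}^\kappa$ together with its metric and orientation. Concretely, it is a left translation of the Heisenberg group $(\mathbb{R}^3,\mathcal{D}^\kappa)$ composed with a rotation about the vertical axis.

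First I compute $dA=A$, the derivative of the affine map, on the frame. Write $R_u$ for the rotation block and $p=(x',y',v')$. I check directly that
\begin{equation*}
A\,\mathcal{X}(p)=\cos u\,\mathcal{X}(a+Ap)+\sin u\,\mathcal{Y}(a+Ap),\qquad A\,\mathcal{Y}(p)=-\sin u\,\mathcal{X}(a+Ap)+\cos u\,\mathcal{Y}(a+Ap).
\end{equation*}
The first two coordinates agree trivially. For the third coordinate one must verify
\begin{equation*}
\tfrac12\kappa(x\sin u-y\cos u)-\tfrac12\kappa y'=-\tfrac12\kappa\bigl(y+\sin u\,x'+\cos u\,y'\bigr)\cos u+\tfrac12\kappa\bigl(x+\cos u\,x'-\sin u\,y'\bigr)\sin u .
\end{equation*}
After expanding and using $\cos^2u+\sin^2u=1$, both sides reduce to the same expression, and the second identity is verified the same way. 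This is the computational core of the argument and also explains the shape of the bottom row of $A$. It shows that $(p,W)\mapsto(a+Ap,AW)$ maps $\mathcal{D}^{\kappa,1}$ to itself. Moreover, in the frames $\{\mathcal{X},\mathcal{Y}\}$ the map acts on $W$ by the rotation $R_u$, so it preserves the inner product and the orientation.

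Next, the action property. The formula is just the linear action of the $4\times4$ matrix on the vector $(1,p)$, together with the induced action of its linear part on $W$. Hence $g\cdot(h\cdot(p,W))=(gh)\cdot(p,W)$ holds as soon as products stay in $\operatorname{Osc}(\kappa)$. Closure under products and inverses follows from the previous step. The composition of two such maps is again an affine map preserving the structure, so it is determined by where it sends $(0,e_1)$. It therefore suffices to show that every structure-preserving affine map of this kind equals some $\Phi_\kappa(u,x,y,v)$, and this is what the orbit computation below gives. Alternatively one can multiply two matrices directly and read off the parameters of the product, which I would do as a check.

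Finally, simple transitivity. Evaluating $g$ at $(0,e_1)$ gives $g\cdot(0,e_1)=(a,Ae_1)=\bigl((x,y,v),(\cos u,\sin u,\tfrac12\kappa(x\sin u-y\cos u))\bigr)$. By the frame computation above, this is the unit vector $\cos u\,\mathcal{X}(a)+\sin u\,\mathcal{Y}(a)$. Given an arbitrary $(p,W)\in\mathcal{D}^{\kappa,1}$, we write $W=\cos\theta\,\mathcal{X}(p)+\sin\theta\,\mathcal{Y}(p)$ and recover $a=p$ and $u=\theta$ mod $2\pi$ uniquely. Hence the orbit map $\operatorname{Osc}(\kappa)\to\mathcal{D}^{\kappa,1}$ is a bijection. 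It is smooth, and so is its inverse, since $\Phi_\kappa$ is injective on $\mathbb{R}/2\pi\mathbb{Z}\times\mathbb{R}^3$ and an immersion. This also shows that $\operatorname{Osc}(\kappa)$ is a closed embedded submanifold, hence a Lie subgroup. The main obstacle is only the trigonometric identity in the first step; everything else is formal.
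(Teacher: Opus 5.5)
Your proposal is correct and follows essentially the paper's route. The orbit map $g\mapsto(a,Ae_{1})$ is onto $\mathcal{D}^{\kappa,1}$, and it is injective because $a$ and the first column of $A$ recover $u$ modulo $2\pi$, hence the whole matrix; this is exactly the paper's remark that the isotropy is trivial since the second column of $A$ can be recovered from its first column and $a$. Your frame computation (which checks out) and the closure argument are extra detail that the paper leaves implicit or builds into the definition of $\operatorname{Osc}(\kappa)$. The frame computation is not strictly needed: the formula defines an action of the whole affine group on $\mathbb{R}^{3}\times\mathbb{R}^{3}$, so the orbit of $(0,e_{1})$ is automatically invariant once you show it equals $\mathcal{D}^{\kappa,1}$. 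For closure, the direct matrix product is cleaner than your claim, asserted without proof, that a structure-preserving affine map is determined by the image of $(0,e_{1})$.
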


It is no difficult to show that the action is transitive. The isotropy
subgroup is trivial since the second column of $A$ in (\ref{aA}) can be
recovered from its first column and $a$.

The Lie algebra $\operatorname{osc}\left( \kappa \right) $ of $\operatorname{Osc}\left(
\kappa \right) $ is the image of the mapping $\phi _{\kappa }:=\left( d\Phi
_{\kappa }\right) _{0}:\mathbb{R}^{4}\rightarrow \mathbb{R}^{4\times 4} $
defined analogously as $\Phi _{\kappa }$, but with%
\begin{equation*}
A=\left( 
\begin{array}{ccc}
0 & -u & 0 \\ 
u & 0 & 0 \\ 
-\tfrac{1}{2}\kappa y & \tfrac{1}{2}\kappa x & 0%
\end{array}%
\right) \text{.}
\end{equation*}

Let $\left\{ e_{0},e_{1},e_{2},e_{3}\right\} $ be the canonical basis of $%
\mathbb{R}^{4}$. We call 
\begin{equation}
U=\phi _{\kappa }\left( e_{0}\right) \text{,\qquad }X=\phi _{\kappa }\left(
e_{1}\right) \text{,\qquad }Y=\phi _{\kappa }\left( e_{2}\right) \text{%
,\qquad }V=\phi _{\kappa }\left( e_{3}\right)  \label{UXYV}
\end{equation}%
and have the usual relations defining $\operatorname{osc}\left( \kappa \right) $:%
\begin{equation}
\left[ X,Y\right] =\kappa V\text{,\qquad }\left[ U,X\right] =Y\text{,\qquad }%
\left[ U,Y\right] =-X\text{.}  \label{bracketUV}
\end{equation}

The oscillator group $\operatorname{Osc}\left( \kappa \right) $ has a canonical
bi-invariant Lorentz metric $g_{\kappa }$, given at the identity by 
\begin{equation}
g_{\kappa }\left( \phi _{\kappa }\left( u,x,y,v\right) ,\phi _{\kappa
}\left( \mu ,\xi ,\eta ,\nu  \right) \right) =x\xi +y\eta +\kappa \left(
u\nu  +v\mu \right)  \label{biOsc}
\end{equation}
In particular, the geodesics of $\operatorname{Osc}\left( \kappa \right) $ through
the identity have the form $t\mapsto \exp \left( tZ\right) $ with $Z\in 
\operatorname{osc}\left( \kappa \right) $.

The groups $\operatorname{Osc}\left( 1\right) $ and $\operatorname{Osc}\left( -1\right) $
are inessentially different. Indeed, the mapping $\mathcal{I}:\operatorname{Osc}%
\left( 1\right) \rightarrow \operatorname{Osc}\left( -1\right) $, $\mathcal{I}\left(
\Phi _{1}\left( u,x,y,v\right) \right) =\Phi _{-1}\left( u,x,y,-v\right) $
is a Lie group isomorphism. We deal with both of them (which does not mean
much extra effort) for two reasons. On the one hand, most computations in
the literature are done for $\kappa =1$, but in the nice picture created by
Patrick Massot (see Fig.\ \ref{fig1}) the sense of the asps corresponds to $\kappa =-1$.
On the other hand, we will see that they are very particular cases of the
quadratic solvable Lie groups we introduce in Section \ref{SectionHD}, 
which are associated with symmetric spaces of the compact or the noncompact
type, and for the two-sphere and the hyperbolic plane they may be identified
with $\operatorname{Osc}\left( 1\right) $ and $\operatorname{Osc}\left( -1\right) $,
respectively (see Examples \ref{Example1} and \ref{Example2}).

\subsection{Geodesics of $\operatorname{Osc}\left( \protect\kappa \right) $ in terms
of sub-Riemannian geodesics of the Heisenberg group\label{SubSec1}}

The vector fields $\mathcal{X}$ and $\mathcal{Y}$ on $\mathbb{R}^{3}$ in (%
\ref{xieta}) are left-invariant vector fields of the Lie group $\mathbb{R}%
^{3}$ with multiplication%
\begin{equation*}
\left( x,y,v\right) \cdot \left( \xi ,\eta ,\nu  \right) =\left( x+\xi
,y+\eta ,v+\nu  +\tfrac{\kappa }{2}\left( x\eta -y\xi \right) \right) 
\text{,}
\end{equation*}%
which is the standard presentation of the Heisenberg group and we call $%
\operatorname{Hei}\left( \kappa \right) $. Its Lie algebra bracket is given by%
\begin{equation*}
\left[ \left( x,y,v\right) ,\left( \xi ,\eta ,\nu  \right) \right]
=\left( 0,0,\kappa \left( x\eta -y\xi \right) \right) \text{.}
\end{equation*}

Now, let $N\left(
\kappa \right) =\Phi _{\kappa }\left( \left\{ 0\right\} \times \mathbb{R}%
^{3}\right) \subset GL\left( 4,\mathbb{R}\right) $, which is another presentation 
of the Heisenberg group.
The map $\operatorname{Hei}%
\left( \kappa \right) \rightarrow N\left( \kappa \right) $, $\left(
x,y,v\right) \mapsto \Phi _{\kappa }\left( 0,x,y,v\right) $, is a Lie group
isomorphism sending $\mathcal{X}$ and $\mathcal{Y}$ to the left-invariant
vector fields $X$ and $Y$ on $\operatorname{Osc}\left( \kappa \right) $ defined in (%
\ref{UXYV}), which restrict to vector fields on $N\left( \kappa \right) $.

Consider on $N\left( \kappa \right) $ the canonical sub-Riemannian
structure, that is, the left-invar\-iant distribution $\overline{\mathcal{D}}%
^{\kappa }$ on $N\left( \kappa \right) $ generated by $X,Y$, with $\left\{
X,Y\right\} $ orthonormal. For the definition of sub-Riemannian and
sub-Lorentzian structures and their geodesics, see the Appendix (Section \ref%
{Appendix}). Meanwhile, we comment that in this article, geodesics refer to
normal geodesics, that is, those distinguished curves which are projections
of the integral curves of the symplectic gradient vector field of the
associated Hamiltonian.

A smooth curve $\sigma $ of a manifold with a distribution $\mathcal{E}$ is
said to be admissible for $\mathcal{E}$ if $\sigma ^{\prime }\left( t\right)
\in \mathcal{E}_{\sigma \left( t\right) }$ for all $t$.

Explicit formulas for the geodesics of $\operatorname{Osc}\left( \kappa \right) $
departing form the identity (that is, one-parameter subgroups) and for the
sub-Riemannian geodesics of $N\left( \kappa \right) $ are well-known. See,
for instance, \cite{Gabriela} (or the previous paper \cite{NorbertDorr},
using other coordinates) and Section 1.4.1 in \cite{LibroLeDonne},
respectively. We recall them now, bringing to light how they are related.

We identify $\mathbb{R}^{2}\equiv \mathbb{C}$ in the usual way, and write $%
\operatorname{Osc}\left( \kappa \right) =\Phi _{\kappa }\left( \mathbb{R\times C}%
\times \mathbb{R}\right) $ and $N\left( \kappa \right)=\Phi _{\kappa }\left( \left\{ 0\right\}
\times \mathbb{C\times R}\right) $

\begin{proposition}
\label{PropGeoHeiO}The sub-Riemannian geodesics of $\left( N\left( \kappa
\right) ,\overline{\mathcal{D}}^{\kappa }\right) $ have the form $\Phi
_{\kappa }\circ \left( 0,\sigma _{\left( z,u\right) }\right) $, where either%
\begin{equation}
\sigma _{\left( z,u\right) }\left( s\right) =\left( \tfrac{1}{u}\left(
1-e^{isu}\right) iz,\tfrac{\kappa }{2u^{2}}\left\vert z\right\vert
^{2}\left( us-\sin \left( us\right) \right) \right)  \label{sigmaPropo1}
\end{equation}%
for some $u\in\mathbb{R}, u\neq 0$ and $z\in \mathbb{C}$, or $\sigma _{\left( z,0\right)
}\left( s\right) =\left( sz,0\right) $ for some $z\in \mathbb{C}$.

\smallskip

The geodesic in $\operatorname{Osc}\left( \kappa \right) $ with initial velocity $%
\phi _{\kappa }\left( u,z,v\right) $ is%
\begin{equation}
\gamma \left( s\right) =\Phi _{\kappa }\left( us,\sigma _{\left( z,u\right)
}\left( s\right) +\left( 0,vs\right) \right) \text{.}  \label{gammaPropo1}
\end{equation}
\end{proposition}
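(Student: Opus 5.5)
The plan is to prove the second assertion first, since the one-parameter subgroups of $\operatorname{Osc}(\kappa)$ are geodesics of the bi-invariant metric $g_\kappa$. We then read off the sub-Riemannian geodesics of $N(\kappa)$ from it.

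\emph{Step 1 (one-parameter subgroups).} Let $Z=\phi_\kappa(u,z,v)$. We verify directly that $\gamma$ in (\ref{gammaPropo1}) satisfies $\gamma(0)=I$ and $\gamma(s)^{-1}\gamma'(s)=Z$ for all $s$; uniqueness of integral curves of left-invariant fields then gives $\gamma(s)=\exp(sZ)$.

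\begin{itemize}
\item It is convenient to multiply out $\Phi_\kappa(u_1,a_1)\Phi_\kappa(u_2,a_2)$. The rotation parts compose, $u=u_1+u_2$. The $\mathbb C$-part is $w_1+e^{iu_1}w_2$. The $v$-part is $v_1+v_2$ plus a correction $\tfrac{\kappa}{2}\operatorname{Im}(\overline{w_1}\,e^{iu_1}w_2)$, read off from the third row of $A$.
\item Writing $\gamma(s)=\Phi_\kappa(us,w(s),c(s))$, the left-invariance condition becomes
\begin{equation*}
w'(s)=e^{ius}z,\qquad c'(s)=v+\tfrac{\kappa}{2}\operatorname{Im}\bigl(\overline{w(s)}\,w'(s)\bigr),
\end{equation*}
with $w(0)=0$ and $c(0)=0$.
\item For $u\neq0$ this integrates to $w(s)=\tfrac{1}{iu}(e^{ius}-1)z=\tfrac1u(1-e^{isu})iz$.
\item Then $\operatorname{Im}(\overline{w}w')=\tfrac{|z|^2}{u}(1-\cos us)$, which integrates to $c(s)=vs+\tfrac{\kappa}{2u^2}|z|^2(us-\sin us)$. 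This matches (\ref{sigmaPropo1}).
\item For $u=0$ one gets $w=sz$ and $c=vs$.
\end{itemize}

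\emph{Step 2 (sub-Riemannian geodesics of $N(\kappa)$).} Here we use the standard normal-geodesic equations for a left-invariant structure on a step-two group. Via the Hamiltonian $H=\tfrac12(h_X^2+h_Y^2)$, with $h_V$ constant (since $V$ is central), one has $\dot h_X=-\kappa h_V h_Y$ and $\dot h_Y=\kappa h_V h_X$. Hence the horizontal velocity $w'=h_X+ih_Y$ satisfies $w'(s)=e^{i\kappa h_V s}z$, and the vertical coordinate obeys the same equation $c'=\tfrac{\kappa}{2}\operatorname{Im}(\overline w w')$ as in Step 1 with $v=0$. Setting $u=\kappa h_V$, the same integration yields $\sigma_{(z,u)}$, and $h_V=0$ gives the straight lines. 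This also explains the relation: the horizontal projection of $\exp(sZ)$ into $N(\kappa)$, obtained by discarding the $us$ and $vs$ terms, is exactly a sub-Riemannian geodesic.

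The main obstacle is bookkeeping of sign conventions. One has to check that the left-invariant frame induced by $\Phi_\kappa$ gives $[X,Y]=\kappa V$ with the sign consistent with the $v$-correction term, so that the Hamiltonian rotation rate really equals the $u$ appearing in $e^{isu}$ rather than $-u$. I will fix this by computing $\Phi_\kappa(0,w,c)^{-1}\,d\Phi_\kappa$ explicitly from (\ref{aA}) once, and using the resulting formula in both steps.
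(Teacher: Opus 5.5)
Your proposal is correct, and it uses the same mechanism as the paper: the central momentum $h_V$ is conserved, the horizontal momentum rotates with angular speed $u=\kappa h_V$, and the one-parameter subgroup is identified by checking $\gamma^{-1}\gamma'=Z$.

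The route differs, though. The paper does not compute in the coordinates of $\operatorname{Osc}(\kappa)$. It quotes the formulas as well known and then recovers them in Proposition \ref{gammaGamma} as the special case of Theorems \ref{geoSubN} and \ref{ThmGamma} for $(G_\kappa,SO(2))$. These are transported by the isomorphism $F_\kappa$ of Proposition \ref{13}, and the identities (\ref{f1})--(\ref{f3}) turn $h_{J(u)}(s)$ and the bracket integrand into explicit expressions.

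You instead write the group law in complex form, derive $w'=e^{ius}z$ and $c'=v+\frac{\kappa}{2}\operatorname{Im}(\overline{w}w')$ directly, and integrate. Your product formula, the value $\operatorname{Im}(\overline{w}w')=\frac{|z|^2}{u}(1-\cos us)$, the resulting $c(s)$ and the case $u=0$ all check out. Your version is self-contained and needs neither $\mathfrak{o}(3)$, $\mathfrak{o}(1,2)$ nor the normalization of $B$. The paper's version explains the shape of the formulas ($\sigma_{(z,u)}=\mathcal{P}\circ\gamma_{(u,z,0)}$, $h_u(s)=sf(s\operatorname{ad}_u)$) and holds for every symmetric pair.

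On the sign issue you flag: with the convention $\theta'=(\operatorname{ad}_{b(\theta)})^{\ast}\theta$, your equations $\dot h_X=-\kappa h_Vh_Y$ and $\dot h_Y=\kappa h_Vh_X$ are right. Moreover, $u=\kappa h_V$ matches the paper's labelling by the covector $(z+u)^\flat$ exactly, since $J(u)^\flat(J(1))=\kappa u$. In any case, a reversed sign would only relabel $u\mapsto -u$ and would not change the family of curves asserted in the statement.
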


For $u\neq 0$, $\sigma _{\left( z,u\right) }$ projects to $\mathbb{C}$ to a
circle through the origin, centered at $iz/u$, with initial velocity $z$,
constant speed $\left\vert z\right\vert $ and radius $\left\vert z/u\right\vert $.

As we mentioned above, the formulas are well-known. Anyway, 
we verify in Example \ref{ExamplegammaGamma} below that
they are in fact the geodesics, illustrating a much more general
result, Theorem \ref{ThmGamma}.


Section 5.4 of \cite{BiggsNagy} establishes relationships between sub-Riemannian
geodesics of the oscillator group and of $SE(2)$ (the group of rigid transformations 
of the plane) for various cases, different from ours.

\subsection{Sub-Riemannian and sub-Lorentzian structures on $\operatorname{Osc}%
\left( \protect\kappa \right) $ and their geodesics}

Let $\lambda \neq 0$ and let $\mathcal{E}^{o}$ and $\mathcal{E}^{\lambda }$
be the left-invariant distributions on $\operatorname{Osc}\left( \kappa \right) $
given at the identity by%
\begin{equation*}
\mathcal{E}_{I_{4}}^{o}=\left\{ \phi _{\kappa }\left( \mu ,\xi ,0,\nu 
\right) \mid \mu ,\xi ,\nu  \in \mathbb{R}\right\} \text{, \ \ \ \ }%
\mathcal{E}_{I_{4}}^{\lambda }=\left\{ \phi _{\kappa }\left( \lambda
\nu  ,\xi ,\eta ,\nu  \right) \mid \xi ,\eta ,\nu  \in \mathbb{R}%
\right\}
\end{equation*}%
and consider on them the restrictions $g_{o}$ and $g_{\lambda }$ of the
bi-invariant Lorentz metric $g_{\kappa }$ (see (\ref{biOsc})) of $\operatorname{Osc}%
\left( \kappa \right) $, respectively.

Before stating the next theorem we recall that a general contact
sub-Riemannian manifold admits a distinguished adapted connection,
introduced in \cite{FVV}. In \cite{Claudio} one can find a presentation
which is independent of sections of the distribution. In our case, the
Heisenberg group $\operatorname{Hei}\left( \kappa \right) $, the left invariant
vector fields $\mathcal{X},\mathcal{Y}$ in (\ref{xieta}) spanning the
distribution are parallel with respect to this connection. For this, see
Example 3 in Section 3 in \cite{FVV}; also, it is not difficult to deduce it
from the expression for $\nabla $ in the proof of Theorem 1.1 in \cite%
{Claudio}.

\begin{theorem}
\label{subOsc}\emph{a)} $\left( \mathcal{E}^{o},g_{o}\right) $ is a
sub-Lorentzian structure on $\operatorname{Osc}\left( \kappa \right) $ and $\left( 
\mathcal{E}^{\lambda },g_{\lambda }\right) $ is a sub-Riemannian structure or
a sub-Lorentzian structure on $\operatorname{Osc}\left( \kappa \right) $, depending on
whether $\kappa \lambda <0$ or $\kappa \lambda >0$, respectively.

\smallskip

\emph{b)} Via the identification described in Proposition \emph{\ref{identificacion}}
 of $\operatorname{Osc}\left( \kappa \right) $ with
the circle bundle $\mathcal{D}^{\kappa ,1}$, 
a smooth curve $\left( \alpha ,\mathcal{U}\right) $ in $%
\mathcal{D}^{\kappa ,1}$ is admissible for $\mathcal{E}^{o}$ if and only if%
\begin{equation}
a^{\prime }\left( s\right) =f\left( s\right) \mathcal{U}\left( s\right)
+h_{o}\left( s\right) e_{3}  \label{alphaPrimeO}
\end{equation}%
for some functions $f,h_{o}$ and all $s$. In other words, the velocity of
the foot point curve, at any instant $s$, is a linear combination of $%
\mathcal{U}\left( s\right) $ and the vertical direction.

\smallskip

\emph{c)} A smooth curve $\left( a,\mathcal{U}\right) $ in $\mathcal{D}%
^{\kappa ,1}$ is admissible for $\mathcal{E}^{\lambda }$ if and only if the
angular speed of the unit vector $\mathcal{U}$ is proportional, with fixed
ratio $\lambda $, to the vertical rate of change of the foot point curve.
More precisely, putting%
\begin{equation}
a^{\prime }\left( s\right) =W\left( s\right) +h\left( s\right) e_{3}
\label{alphaPrime}
\end{equation}%
with $W\left( s\right) \in \mathcal{D}_{a\left( s\right) }^{\kappa }$ and $%
h\left( s\right) \in \mathbb{R}$, then 
\begin{equation}
\tfrac{D\mathcal{U}}{ds}\left( s\right) =\lambda h\left( s\right) \mathcal{V}%
\left( s\right)  \label{DUdt}
\end{equation}%
for all $s$, where $\frac{D}{ds}$ is the covariant derivative along $a$ for
the adapted connection on $N\left( \kappa \right) $ and the vector field $%
\mathcal{V}$ along $a$ is the one making $\left\{ \mathcal{U}\left( s\right)
,\mathcal{V}\left( s\right) \right\} $ a positively oriented orthonormal
basis for $g_{\lambda }$, for all $s$.

\smallskip

\emph{d)} The geodesics of 
$\left( \operatorname{Osc}\left( \kappa \right),\mathcal{E}^{o},g_{o}\right) $
and of $\left( \operatorname{Osc}\left( \kappa \right),\mathcal{E}^{\lambda },g_{\lambda }\right) $
departing from the identity have the form
\begin{eqnarray}
\gamma \left( s\right) &=&\exp \left( s\phi _{\kappa }\left( u,z,v\right)
\right) \exp \left( s\phi _{\kappa }\left( 0,-yi,0\right) \right) \text{,}
\label{gamma1} \\
\gamma \left( s\right) &=&\exp \left( s\phi _{\kappa }\left( \lambda \left(
v-w\right) ,z,v+w\right) \right) \exp \left( s\phi _{\kappa }\left( \lambda
w,0,-w\right) \right) \text{,}  \label{gamma2}
\end{eqnarray}%
respectively, with $u,v,w\in \mathbb{R}$ and $z=x+iy\in \mathbb{C}$.
\end{theorem}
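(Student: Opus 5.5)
The plan is to treat parts (a)–(c) by direct computation in the matrix model, and to obtain (d) from the standard description of normal geodesics for a left-invariant distribution that is the orthogonal complement of a nondegenerate subalgebra in a group with a bi-invariant pseudo-Riemannian metric.

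\emph{Part (a).} By (\ref{biOsc}),
$$g_{\kappa}(\phi_\kappa(\mu,\xi,0,\nu),\phi_\kappa(\mu,\xi,0,\nu))=\xi^2+2\kappa\mu\nu .$$
In the variables $\xi,\mu,\nu$ this form has signature $(+,+,-)$, so $(\mathcal{E}^o,g_o)$ is sub-Lorentzian. On $\mathcal{E}^\lambda$ the metric is
$$g_\kappa(\phi_\kappa(\lambda\nu,\xi,\eta,\nu),\phi_\kappa(\lambda\nu,\xi,\eta,\nu))=\xi^2+\eta^2+2\kappa\lambda\nu^2 .$$
Its signature is therefore decided by the sign of $\kappa\lambda$, and I would match this with the sign convention of the statement. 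In both cases I must also check bracket generation. For $\mathcal{E}^o$ this follows from (\ref{bracketUV}): $[U,X]=Y$ produces $Y$. For $\mathcal{E}^\lambda$, the bracket $[X,Y]=\kappa V$ produces $V$, and since $\lambda U+V\in\mathcal{E}^\lambda$ we also obtain $U$.

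\emph{Parts (b) and (c).} Write a curve in $\operatorname{Osc}(\kappa)$ as $g(s)=\left(\begin{smallmatrix}1&0\\ a&A\end{smallmatrix}\right)$. Under Proposition \ref{identificacion} it corresponds to $(a(s),\mathcal{U}(s))$ with $\mathcal{U}=Ae_1$, and the second column $Ae_2$ is $\mathcal{V}$. The curve is admissible if and only if $g^{-1}g'$ lies in the distribution at the identity. Writing $g^{-1}g'=\phi_\kappa(\mu,\xi,\eta,\nu)$, this gives
$$a'=A(\xi,\eta,\nu)^T,\qquad A'=A\cdot A_{(\mu,\xi,\eta)},$$
where $A_{(\mu,\xi,\eta)}$ is the lower block of $\phi_\kappa$. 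Using the explicit $A$ in (\ref{aA}), $A(\xi,\eta,\nu)^T$ equals $\xi\,\mathcal{U}+\eta\,\mathcal{V}+\nu e_3$, with $\mathcal{U},\mathcal{V}$ lying in $\mathcal{D}^\kappa_{a}$.
\begin{itemize}
\item For $\mathcal{E}^o$ we have $\eta=0$, which is exactly (\ref{alphaPrimeO}).
\item For $\mathcal{E}^\lambda$ we have $\mu=\lambda\nu$, and $\nu=h$ is the vertical component in (\ref{alphaPrime}). The rotation part of $A'$ then says that $\mathcal{U}$ rotates with angular speed $\mu=\lambda h$ relative to the left-invariant frame $\{\mathcal{X},\mathcal{Y}\}$. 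Since that frame is parallel for the adapted connection, this is precisely (\ref{DUdt}).
\end{itemize}

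\emph{Part (d).} The orthogonal complements of the distributions at the identity are $\mathfrak h^o=(\mathcal{E}^o_{I_4})^\perp=\mathbb{R}Y$ and $\mathfrak h^\lambda=\mathbb{R}\,\phi_\kappa(\lambda,0,0,-1)$. Both are one-dimensional, hence subalgebras, and both are nondegenerate: $g_\kappa(Y,Y)=1$, and the second has squared norm $-2\kappa\lambda\neq0$. I would invoke the known result for this situation. Decomposing the initial covector, identified with $Z\in\operatorname{osc}(\kappa)$ via the bi-invariant metric, as $Z=Z_{\mathcal{E}}+Z_{\mathfrak h}$, the normal geodesics through the identity are $s\mapsto\exp(sZ)\exp(-sZ_{\mathfrak h})$. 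If the paper's appendix does not provide this, I would prove it by checking that the Hamiltonian equations reduce to the Euler–Arnold form $\dot P=[P,\Pi_{\mathcal E}P]$, with $P$ the left-trivialized momentum. One then verifies that this candidate curve satisfies the equation, using $[\mathfrak h,\mathfrak h]=0$ and ad-invariance of $g_\kappa$.
\begin{itemize}
\item For $\mathcal{E}^o$, take $Z=\phi_\kappa(u,z,v)$ with $Z_{\mathfrak h}=yY$. Then $-sZ_{\mathfrak h}=s\phi_\kappa(0,-yi,0)$, which gives (\ref{gamma1}).
\item For $\mathcal{E}^\lambda$, write $Z=\phi_\kappa(\lambda v,z,v)+\phi_\kappa(-\lambda w,0,w)$. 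This gives $Z=\phi_\kappa(\lambda(v-w),z,v+w)$ and second factor $\exp(s\phi_\kappa(\lambda w,0,-w))$, which is (\ref{gamma2}).
\end{itemize}

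The main obstacle I expect is the sign and orientation bookkeeping in (a) and (c): the signature convention relating $\kappa\lambda$ to sub-Riemannian versus sub-Lorentzian, and the direction of rotation in (\ref{DUdt}) relative to the orientation on $\mathcal{D}^\kappa$. I would pin both down carefully on the explicit matrices.
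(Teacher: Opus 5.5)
Your proposal is correct and follows the paper's proof. For (a) you use the same quadratic forms and the brackets $[U,X]=Y$, $[X,Y]=\kappa V$. For (b) and (c) you use the admissibility condition $g^{-1}g'\in\mathcal{E}_{I_4}$ worked out in the matrix model; your identity $A(\xi,\eta,\nu)^T=\xi\,\mathcal{U}+\eta\,\mathcal{V}+\nu e_3$ is a tidier packaging of the paper's coordinate equations. For (d) you apply Proposition~\ref{PropRM} to the one-dimensional subalgebras $\mathfrak{h}^o$ and $\mathfrak{h}^\lambda$, exactly as the paper does.

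The one thing to fix is your hedge in (a). Your form $\xi^2+\eta^2+2\kappa\lambda\nu^2$ is positive definite exactly when $\kappa\lambda>0$, so there is no convention to ``match.'' $\mathcal{E}^\lambda$ is sub-Riemannian for $\kappa\lambda>0$ and sub-Lorentzian for $\kappa\lambda<0$. This is also what the paper's own proof concludes; the ``respectively'' in the statement is swapped, and you should say so outright.
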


\begin{remark} \upshape
	As explained above, in this paper we deal only with Hamiltonian geodesics.
We comment in the Appendix (Section \ref{Appendix}) 
that in the sub-Riemannian case they are in
particular locally distance minimizing curves, but, in general, they may not
the only constant speed curves with this property. Those of the theorem
above, however, are the only locally distance minimizing constant speed
curves, since the distribution has step two (see for instance 20.5.1 of \cite%
{AgrachevAbnormal}).
\end{remark}

\noindent \textit{Proof of Theorem \ref{subOsc}. } 
Parts (a) and (d) are very particular cases of Theorem \ref{sub-semi}
below, via Example \ref{Example1} (see the real case in Example \ref{lastexample}). 
Since the proofs are short, we write them
down, as an illustration.

\smallskip

a) Let $b_{\kappa }$ be the quadratic form on $\operatorname{osc}\left( \kappa
\right) $ associated with the Lorentz metric (\ref{biOsc}). We compute%
\begin{equation*}
b_{\kappa }\left( \phi \left( u,x,0,v\right) \right) =x^{2}+2\kappa uv\text{%
\ \ \ \ and\ \ \ \ \ }b_{\kappa }\left( \phi \left( \lambda v,x,y,v\right)
\right) =x^{2}+y^{2}+2\kappa \lambda v^{2}\text{.}
\end{equation*}%
Hence, the metric on $\mathcal{E}_{I_{4}}^{o}$ is Lorentzian and that on $%
\mathcal{E}_{I_{4}}^{\lambda }$ is Lorentzian or Riemannian depending on
whether $\kappa \lambda <0$ or $\kappa \lambda >0$, respectively. It remains
only to check the bracket generating condition. We do it next. Notice that $%
U,X$ as in (\ref{UXYV}) are in $\mathcal{E}^{o}$ and by (\ref{bracketUV}), $%
\left[ U,X\right] =Y$, which spans a complement of $\mathcal{E}^{o}$.
Similarly, $X$ and $Y$ are in $\mathcal{E}^{\lambda }$ and $\left[ X,Y\right]
=\kappa U,$ which spans a complement of $\mathcal{E}^{\lambda }$, since $%
\lambda \neq 0.$

\smallskip

b) Let $\left( a,\mathcal{U}\right) $ be a smooth curve in $\mathcal{D}%
^{\kappa ,1}$. Under the identification in Proposition \ref{identificacion},
it corresponds to the curve $\sigma \left( s\right) =\left( 
\begin{array}{cc}
1 & 0 \\ 
a\left( s\right) & A\left( s\right)%
\end{array}%
\right) $ in $GL\left( 4,\mathbb{R}\right) $, where $A\left( s\right) $ is
the unique matrix such that $\sigma \left( s\right) \in \operatorname{Osc}\left(
\kappa \right) $ and $A\left( s\right) e_{1}=\mathcal{U}\left( s\right) $
for all $s$. The condition for $\sigma $ to be admissible for $\mathcal{E}%
^{o}$ is that%
\begin{equation}
\left( 
\begin{array}{cc}
1 & 0 \\ 
a & A%
\end{array}%
\right) ^{-1}\left( 
\begin{array}{cc}
1 & 0 \\ 
a^{\prime } & A^{\prime }%
\end{array}%
\right) \in \mathcal{E}_{I_{4}}^{o}  \label{Ainv}
\end{equation}%
If $a$ and $A$ are as in (\ref{aA}), this translates into 
\begin{equation*}
\left( x^{\prime },y^{\prime }\right) =\xi \left( \cos u,\sin u\right) \text{
\ \ \ and \ \ \ }v^{\prime }=\nu  -\tfrac{1}{2}\kappa \xi \left( y\cos
u-x\sin u\right)
\end{equation*}%
for some functions $\nu  ,\xi $ (it is clear that $u^{\prime }=\mu $ for
some function $\mu $). On the other hand, condition (\ref{alphaPrimeO}) can
be written as 
\begin{equation*}
p\left( a^{\prime }\left( s\right) -f\left( s\right) A\left( s\right)
e_{1}\right) =0\text{,}
\end{equation*}%
where $p:\mathbb{R}^{3}\rightarrow \mathbb{R}^{2}$ is the canonical
projection onto the first two coordinates. The left-hand side is $\left(
x^{\prime }\left( s\right) ,y^{\prime }\left( s\right) \right) -f\left(
s\right) \left( \cos u\left( s\right) ,\sin u\left( s\right) \right) $.
Thus, if $\sigma $ is admissible for $\mathcal{E}^{o}$, one can take $f=\xi $
and an appropriate function $h$ to ensure that condition (\ref{alphaPrimeO})
for $\left( a,\mathcal{U}\right) $ is satisfied. Conversely, one can take $%
\xi =f$ and a suitable function $\nu  $.

\smallskip

c) We look at condition (\ref%
{Ainv}), but now for $\mathcal{E}^{\lambda }$ instead of $\mathcal{E}^{o}$ (also for $%
a$ and $A$ as in (\ref{aA})). 
By multiplying the corresponding matrices, we obtain that 
it is equivalent to the existence of smooth
functions $\xi ,\eta ,\nu  $ such that%
\begin{eqnarray*}
x^{\prime } &=&\xi \cos u-\eta \sin u\text{,\ \ \ \ }y^{\prime }=\eta \cos
u+\xi \sin u\text{;} \\
v^{\prime } &=&\nu  -\tfrac{1}{2}\kappa \left( y\left( \xi \cos u-\eta
\sin u\right) -x\left( \xi \sin u+\eta \cos u\right) \right) \text{;} \\
u^{\prime }\cos u &=&\lambda \nu  \cos u\text{,\ \ \ \ }u^{\prime }\sin
u=\lambda \nu  \sin u\text{.}
\end{eqnarray*}%
This, in turn, can be simplified as%
\begin{eqnarray}
\left( x+iy\right) ^{\prime } &=&\left( \xi +i\eta \right) e^{iu}\text{;}
\label{uno} \\
v^{\prime } &=&\nu  -\tfrac{1}{2}\kappa \left( yx^{\prime }-xy^{\prime
}\right) \text{;}  \label{tres} \\
u^{\prime }e^{iu} &=&\lambda \nu  e^{iu}\text{.}  \label{DOS}
\end{eqnarray}

On the other hand, $\mathcal{U}\left( s\right) =A\left( s\right) e_{1}=\cos
\left( u\left( s\right) \right) \mathcal{X}_{a\left( s\right) }+\sin \left(
u\left( s\right) \right) \mathcal{Y}_{a\left( s\right) }$ and so, 
\begin{equation*}
\mathcal{V}\left( s\right) =A\left( s\right) e_{2}=-\sin \left( u\left(
s\right) \right) \mathcal{X}_{a\left( s\right) }+\cos \left( u\left(
s\right) \right) \mathcal{Y}_{a\left( s\right) }\text{.}
\end{equation*}%
Since $\mathcal{X}$ and $\mathcal{Y}$ are parallel for the adapted
connection on $N\left( \kappa \right) $, we have $\frac{D\mathcal{U}}{ds}%
=u^{\prime }\mathcal{V}$. Besides, 
\begin{equation*}
a^{\prime }=\left( x^{\prime },y^{\prime },v^{\prime }\right) =x^{\prime
}\left( 1,0,-\tfrac{1}{2}\kappa y\right) +y^{\prime }\left( 0,1,\tfrac{1}{2}%
\kappa x\right) +\left( 0,0,h\right) \text{,}
\end{equation*}%
with $h$ as in (\ref{alphaPrime}) given by $h=v^{\prime }-%
\frac{1}{2}\kappa \left( xy^{\prime }-yx^{\prime }\right) $.

Now, suppose that $\sigma $ is admissible for $\mathcal{E}^{\lambda }$. By (%
\ref{DOS}), $u^{\prime }=\lambda \nu  $. By (\ref{tres}), $\nu 
=v^{\prime }-\frac{1}{2}\kappa \left( xy^{\prime }-yx^{\prime }\right) =h$.
Consequently, $\frac{D\mathcal{U}}{ds}=u^{\prime }\mathcal{V}=\lambda
\nu  \mathcal{V}=\lambda h\mathcal{V}$, as desired.

Conversely, if (\ref{alphaPrime}) and (\ref{DUdt}) hold, then one can choose 
$\nu  $ satisfying (\ref{tres}), then $h=\nu  $ and so (\ref{DOS})
holds. Finally, by taking $\xi +i\eta =\left( x+iy\right) ^{\prime }e^{-iu}$
we are ready with (\ref{uno}) and so we conclude that $\sigma $ is
admissible for $\mathcal{E}^{\lambda }$.

\smallskip

d) As mentioned at the beginning of the proof, this is a very particular
case of a result below. It is a direct application of Proposition \ref%
{PropRM} and we present it as an example. It is required that the metrics $%
g_{o}$ and $g_{\lambda }$ are nondegenerate, what we know from (a). Also, we
compute, using (\ref{biOsc}), the orthogonal complements of $\mathcal{E}^{o}$
and $\mathcal{E}^{\lambda }$ at the identity, obtaining%
\begin{equation}
\mathfrak{h}^{o}=\left\{ \phi _{\kappa }\left( 0,0,y,0\right) \mid y\in 
\mathbb{R}\right\} \text{ \ \ \ and\ \ \ \ }\mathfrak{h}^{\lambda }=\left\{
\phi _{\kappa }\left( -\lambda \nu  ,0,0,\nu  \right) \mid \nu 
\in \mathbb{R}\right\} \text{,}  \label{hohlambda}
\end{equation}%
respectively. Since they are clearly subalgebras, formulas (\ref{gamma1})
and (\ref{gamma2}) hold. \hfill $\square$

\bigskip

The distributions $\mathcal{E}^{o}$ and $\mathcal{E}^{\lambda }$ for $%
\lambda \neq 0$ are, up to isomorphism, those for which Proposition \ref%
{PropRM} below can be applied. In fact, we looked at the classification of
the Lie subalgebras of the oscillator Lie algebras in a corollary of Theorem
1 in \cite{Biggs} and found that $\mathfrak{h}^{o}$ and $\mathfrak{h}%
^{\lambda }$ as in (\ref{hohlambda}) are the only ones (up to isomorphism)
that are nondegenerate and whose orthogonal complements are bracket
generating.

\medskip

In the following proposition we restrict ourselves to the case $\kappa =1$.
For simplicity we sometimes write $\Phi $ and $\phi $ instead of $\Phi _{1}$
and $\phi _{1}$, respectively. We find which of the geodesics in the theorem
above are periodic. Notice that $\operatorname{Osc}\left( 1\right) $ is
diffeomorphic to $S^{1}\times \mathbb{R}^{3}$.

\begin{proposition}
\label{PropoPeriodic}\emph{a)} The periodic geodesics of $\left( \operatorname{Osc}%
\left( 1\right) ,\mathcal{E}^{o}\right) $ through the identity are exactly
those of the form%
\begin{equation}
\gamma \left( s\right) =\Phi _{1}\left( us,ic\left( 1-e^{ius}\right) ,-%
\tfrac{1}{2}c^{2}\sin \left( us\right) \right)  \label{periodic1}
\end{equation}%
for some $u,c\in \mathbb{R}$.

\smallskip

\emph{b)} Let $\lambda \neq 0$, as above. The nonconstant periodic geodesics of $%
\left( \operatorname{Osc}\left( 1\right) ,\mathcal{E}^{\lambda }\right) $ through
the identity are exactly those given by%
\begin{equation}
\gamma \left( s\right) =\Phi _{1}\left( \lambda urs,\tfrac{1}{\lambda u}%
\left( 1-e^{i\lambda us}\right) iz,\tfrac{r}{\lambda }\sin \left( \lambda
us\right) \right) \text{,}  \label{periodic2}
\end{equation}%
for any $u\neq 0$, with $z\in \mathbb{C}$ and $r\in \mathbb{Q}$ satisfying $%
\left\vert z\right\vert ^{2}+2\lambda ru^{2}=0$.

\smallskip

\emph{c)} Identifying $\gamma $ with a curve $\left( \alpha ,\mathcal{U}%
\right) $ in $\mathcal{D}^{1,1}$ as in Proposition \ref{identificacion}, the
foot point curve $\alpha $ describes an ellipse in $\mathbb{R}^3 $
contained in the plane through the origin orthogonal to the vectors $\left( c,2\right) $
and $\left( z,2\lambda u\right) $, for \emph{(a)} and \emph{(b)}, respectively.
\end{proposition}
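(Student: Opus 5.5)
We will prove all three parts by making the formulas of Theorem \ref{subOsc}(d) explicit with Proposition \ref{PropGeoHeiO}, and then reading off periodicity coordinate by coordinate. Two facts are used throughout. First, $\Phi_1$ is $2\pi$-periodic in its first argument and injective on $(\mathbb{R}/2\pi\mathbb{Z})\times\mathbb{C}\times\mathbb{R}$. Hence $\gamma(s)=\Phi_1(\theta(s),\zeta(s),\omega(s))$ is periodic exactly when $e^{i\theta}$, $\zeta$ and $\omega$ have a common period. Second, from the matrix form (\ref{aA}) the product rule is
\[
\Phi_1(u_1,a_1)\,\Phi_1(u_2,a_2)=\Phi_1(u_1+u_2,\;a_1+A_1a_2).
\]

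\emph{Part (a).} By (\ref{gamma1}), $\gamma(s)=\exp(s\phi(u,z,v))\exp(s\phi(0,-yi,0))$. The first factor is $\Phi(us,\sigma_{(z,u)}(s)+(0,vs))$ by (\ref{gammaPropo1}). The second factor is $\Phi(0,-isy,0)$, since $\sigma_{(w,0)}(s)=(sw,0)$. The product rule then gives:
\begin{itemize}
\item angle $us$;
\item planar part $\tfrac1u(1-e^{ius})iz-isy\,e^{ius}$;
\item vertical part $\tfrac{1}{2u^2}|z|^2(us-\sin us)+vs+\tfrac12\operatorname{Im}\bigl(\overline{(\text{planar of }a_1)}\cdot(\text{rotated }a_2)\bigr)$, where the last term is a bounded oscillating correction coming from the bottom row of $A_1$.
\end{itemize}
If $u=0$, the planar part is affine in $s$, so $\gamma$ is periodic only if it is constant, which is the case $c$ arbitrary with $u=0$ in (\ref{periodic1}). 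If $u\neq0$, the summand $-isy\,e^{ius}$ is unbounded unless $y=0$, so $z=x$ is real. In that case the second factor is trivial and the vertical part reduces to $\tfrac{x^2}{2u^2}(us-\sin us)+vs$. Its linear term must vanish, which gives $v=-x^2/(2u)$. Putting $c=x/u$ yields exactly (\ref{periodic1}). Conversely, each such curve has period $2\pi/|u|$.

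\emph{Part (b).} Note that $\exp(s\phi(\lambda w,0,-w))=\Phi(\lambda ws,0,-ws)$, since $U$ and $V$ commute and a purely $(U,V)$ element contributes nothing to the planar part. Set $\mu=\lambda(v-w)$. By (\ref{gamma2}) and the product rule, $\gamma(s)$ has:
\begin{itemize}
\item angle $\lambda vs$;
\item planar part $\tfrac1\mu(1-e^{i\mu s})iz$;
\item vertical part $\tfrac{|z|^2}{2\mu^2}(\mu s-\sin\mu s)+vs$. No cross term appears here, because $a_2$ has zero planar part.
\end{itemize}
If $\mu=0$, the same argument as before shows that only constant curves are periodic. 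If $\mu\neq0$, write $u:=v-w$, so that $\mu=\lambda u$. Periodicity of the planar and vertical parts forces a period in $(2\pi/|\mu|)\mathbb{Z}$. It also requires the linear term of the vertical part to vanish: $v+|z|^2/(2\lambda u)=0$. The angle $\lambda vs$ must then return to $0$ mod $2\pi$ at some multiple of $2\pi/|\mu|$, which holds if and only if $r:=v/u\in\mathbb{Q}$. With $v=ru$, the linear condition becomes $|z|^2+2\lambda ru^2=0$. The vertical part then equals $-\tfrac{|z|^2}{2\lambda^2u^2}\sin(\lambda us)=\tfrac r\lambda\sin(\lambda us)$, which is (\ref{periodic2}). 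The converse is immediate.

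\emph{Part (c).} Write $\theta=us$ (respectively $\lambda us$). Using $i(1-e^{i\theta})=(\sin\theta,1-\cos\theta)$, the foot point $\alpha(s)$ equals a constant vector plus a combination of $\cos\theta$ and $\sin\theta$ with fixed vector coefficients, so its image is an ellipse. For (a), $\alpha=(c\sin\theta,\,c(1-\cos\theta),\,-\tfrac{c^2}{2}\sin\theta)$, and its inner product with $(c,0,2)$ vanishes identically. For (b), one checks in the same way that $\langle\alpha,(\operatorname{Re}z,\operatorname{Im}z,2\lambda u)\rangle=0$, using $|z|^2=-2\lambda ru^2$.

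The step most likely to go wrong is computing the product in part (a), specifically the correction term in the vertical coordinate and the rotation of $a_2$ by $A_1$. This is worth doing carefully, even though the unboundedness of $s\,e^{ius}y$ in the planar part already forces $y=0$ before that term matters.
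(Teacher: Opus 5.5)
Your proof is correct and follows the paper's argument essentially step by step. It uses the same factorization via Theorem~\ref{subOsc}(d) and Proposition~\ref{PropGeoHeiO}, the same case splits ($u=0$ in (a), $\mu=0$ in (b)), the unbounded term $s\,e^{ius}y$ forcing $y=0$, the vanishing of the linear vertical term and the rationality of $v/u$; in (c) you also carry out the inner-product check that the paper only calls a lengthy computation.

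There is one harmless slip in (a): the vertical cross term equals $-\tfrac{sy}{2}\det\bigl(\tfrac1u(1-e^{ius})iz,\,ie^{ius}\bigr)$. This is $s$ times an oscillating factor, hence unbounded when $y\neq 0$, not bounded as you claim. It does not matter, since, as you note, $y=0$ is forced by the planar part first. Similarly, in (b) your claim that periods lie in $(2\pi/|\mu|)\mathbb{Z}$ tacitly needs $z\neq 0$; the case $z=0$ yields only the constant curve, an omission the paper shares.
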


Before the proof we note that if we identify $\mathbb{R}^{2}\equiv \mathbb{C}
$ in the usual way, and write $\operatorname{Osc}\left( \kappa \right) $ $=\Phi
_{\kappa }\left( \mathbb{R\times C}\times \mathbb{R}\right) $ as above, then
a straightforward computation yields:%
\begin{equation}
\Phi _{\kappa }\left( u,z,v\right) \Phi _{\kappa }\left( \mu ,\zeta
,\nu  \right) =\Phi _{\kappa }\left( u+\mu ,z+e^{iu}\zeta ,v+\nu  +%
\tfrac{1}{2}\kappa \det \left( z,e^{iu}\zeta \right) \right) \text{,}
\label{multipliPhi}
\end{equation}%
where $\det \left( x+iy,c+id\right) =xd-yc$.

\bigskip

\noindent \textit{Proof of Proposition \ref{PropoPeriodic}. } 
a) By Theorem \ref{subOsc} (d), the geodesics through the
identity have the form $\gamma \left( s\right) =\gamma _{1}\left( s\right)
\gamma _{2}\left( s\right) $, with $\gamma _{1}\left( s\right) =\exp \left(
s\phi \left( u,z,v\right) \right) $ and $\gamma _{2}\left( s\right) =\exp
\left( s\phi \left( 0,-yi,0\right) \right) $ for $u,v\in \mathbb{R}$ and $%
z=x+iy$. By (\ref{gammaPropo1}), $\gamma _{2}\left( s\right) =\Phi \left(
0,-iys,0\right) $.

Now we study the first factor and consider two cases.

If $u=0$, then by (\ref{gammaPropo1}), $\gamma _{1}\left( s\right) =\Phi
\left( 0,sz,vs\right) $. We multiply $\gamma _{1}\left( s\right) $ and $%
\gamma _{2}\left( s\right) $ using (\ref{multipliPhi}) and obtain $\gamma
\left( s\right) =\Phi \left( 0,xs,vs-\frac{1}{2}xys^{2}\right) $. So, in
this case, only the constant geodesic $\Phi \left( 0\right) $ is periodic.

If $u\neq 0$, again by (\ref{gammaPropo1}),%
\begin{equation}
\gamma _{1}\left( s\right) =\Phi \left( us,\tfrac{1}{u}\left(
1-e^{ius}\right) iz,vs+\tfrac{1}{2u^{2}}\left\vert z\right\vert ^{2}\left(
us-\sin \left( us\right) \right) \right) \text{.}  \label{gamma1Prueba}
\end{equation}%
By (\ref{multipliPhi}), $\gamma \left( s\right) =\Phi \left( us,\mathcal{Z}%
\left( s\right) ,\mathcal{W}\left( s\right) \right) $, with $\mathcal{Z}%
\left( s\right) =\tfrac{1}{u}\left( 1-e^{isu}\right) iz-e^{isu}iys$ and some
function $\mathcal{W}$. Assume that $\gamma $ is periodic. Then $\mathcal{Z}$
is periodic and so $y=0$. In particular, $\gamma _{2}=I_{4}$ is constant.
Hence, $\gamma =\gamma _{1}$, which we describe next. We have that $z=x\in 
\mathbb{R}$. The periodicity of $\mathcal{W}$ implies that $v+\tfrac{1}{2u}%
x^{2}=0$ (see \ref{gamma1Prueba}). Thus,%
\begin{equation*}
\gamma \left( s\right) =\Phi \left( us,\tfrac{1}{u}\left( 1-e^{isu}\right)
ix,-\tfrac{1}{2u^{2}}x^{2}\sin \left( us\right) \right) \text{.}
\end{equation*}%
Since $x\in \mathbb{R}$ is arbitrary, we call $\frac{x}{u}=c$ and obtain
expression (\ref{periodic1}).

Such $\gamma $ is clearly a geodesic and is periodic, since $\Phi \left(
u,z,v\right) =\Phi \left( u+2k\pi ,z,v\right) $ for all $u,v\in \mathbb{R}$,
 $z\in \mathbb{C}$.

\medskip

b) By (\ref{gamma2}), the geodesics have the form $\gamma \left( s\right)
=\gamma _{1}\left( s\right) \gamma _{2}\left( s\right) $, with 
\begin{equation*}
\gamma _{1}\left( s\right) =\exp \left( s\phi \left( \lambda \left(
v-w\right) ,z,v+w\right) \right) \text{\ \ \ and\ \ \ }\gamma _{2}\left(
s\right) =\exp \left( s\phi \left( \lambda w,0,-w\right) \right) \text{.}
\end{equation*}
By (\ref{gammaPropo1}), $\gamma _{2}\left( s\right) =\Phi \left( \lambda
ws,0,-ws\right) $. For $\gamma _{1}$ we consider two cases:

If $v=w$, by (\ref{gammaPropo1}), we have $\gamma _{1}\left( s\right) =\Phi
\left( 0,sz,2ws\right) $. Using (\ref{multipliPhi}) we compute $\gamma
\left( s\right) =\Phi \left( \lambda ws,sz,ws\right) $. If it is periodic,
then $z=w=0$. Hence, in this case only the constant geodesic $\Phi \left(
0\right) $ is periodic.

Now suppose that $v\neq w$. We call $u=v-w$; in particular, $v=u+w$. By (\ref%
{gammaPropo1}), we have
\begin{equation*}
\gamma _{1}\left( s\right) =\Phi \left( \lambda us,\tfrac{1}{\lambda u}%
\left( 1-e^{i\lambda us}\right) iz,\left( v+w\right) s+\tfrac{\kappa }{%
2\lambda ^{2}u^{2}}\left\vert z\right\vert ^{2}\left( \lambda us-\sin \left(
\lambda us\right) \right) \right)
\end{equation*}%
We multiply it with $\Phi \left( \lambda ws,0,-ws\right) $ using (\ref%
{multipliPhi}) and get%
\begin{equation*}
\gamma \left( s\right) =\Phi \left( \lambda vs,\tfrac{1}{\lambda u}\left(
1-e^{i\lambda us}\right) iz,T\left( s\right) \right) \text{,}
\end{equation*}%
where the first coordinate was obtained as $\lambda us+\lambda ws$, and 
\begin{eqnarray*}
T\left( s\right) &=&\left( v+w\right) s+\tfrac{\left\vert z\right\vert ^{2}}{%
2\lambda ^{2}u^{2}}\left( \lambda us-\sin \left( \lambda us\right) \right)
-ws \\
&=&\left( v+\tfrac{\left\vert z\right\vert ^{2}}{2\lambda u}\right) s-\tfrac{%
\left\vert z\right\vert ^{2}}{2\lambda ^{2}u^{2}}\sin \left( \lambda
us\right) \text{.}
\end{eqnarray*}%
Hence, if $\gamma $ is periodic, then $v+\tfrac{\left\vert z\right\vert ^{2}}{%
2\lambda u}=0$ and $e^{i\lambda us}$ and $e^{i\lambda vs}$ must be
simultaneously periodic. This means that $2\lambda uv+\left\vert
z\right\vert ^{2}=0$ and that there exists $s_{o}$ such that $e^{i\lambda
us}=e^{i\lambda u\left( s+s_{o}\right) }$ and $e^{i\lambda vs}=e^{i\lambda
v\left( s+s_{o}\right) }$ for all $s$. The latter implies that $\lambda
us_{o}$ and $\lambda vs_{o}$ are integral multiples of $2\pi $, and so, $%
v=ru $ for some rational number $r$. Consequently, $\lambda vs=\lambda rus$
and $-\left\vert z\right\vert ^{2}=2\lambda uv=2\lambda ru^{2}$. Thus, $%
T\left( s\right) =\tfrac{r}{\lambda }\sin \left( \lambda us\right) $ and so $%
\gamma $ is as stated in (\ref{periodic2}).

Conversely, we observe, on the one hand, that the curve $\gamma $ is
periodic, and on the other hand, that it is a geodesic for $\mathcal{E}%
^{\lambda }$, since a lengthy but straightforward computation shows that it
is equal to the product in (\ref{gamma2}), with $v=ru$ and $w=\left(
r-1\right) u$ for some $u\neq 0$.

\smallskip

c) Lengthy computations show that in each case, $\alpha $ is contained in the
corresponding plane. The trajectory is an ellipse since it projects to a
circle. \hfill $\square$

\medskip

\begin{remark} \upshape
The proof of the proposition shows that the periodic sub-Lorentzian geodesics 
$\left( \operatorname{Osc}\left( 1\right) ,\mathcal{E}^{o},g_{o}\right) $ in 
(a) turn out to be geodesics of $\operatorname{Osc}\left( 1\right) $ for the
bi-invariant metric, admissible for $\mathcal{E}^{o}$. So, in a sense, they
are not new, since all the periodic geodesics of a quotient $\Gamma
\backslash \operatorname{Osc}\left( 1\right) $ have been described in \cite{Gabriela}%
; in our case, it is not even necessary to pass to the quotient. Thus, our
contribution in part (a) is that there no other periodic geodesics.

In contrast, part (b), for $\mathcal{E}^{\lambda }$, features so to
say genuine periodic sub-Lorentzian or sub-Riemannian geodesics.
\end{remark}

\section{Higher dimensional analogs\label{SectionHD}}

The oscillator Lie algebras are among the simplest double extensions of Lie
algebras, introduced in \cite{Medina, Favre} (see also 
\cite{OvandoTorino}), which is closely linked to the existence of
bi-invariant metrics. In this section we consider a particular case of
solvable Lie algebras obtained by this procedure. We start directly from
a nilpotent Lie group, omitting the first step, an extension of an abelian subalgebra.

\subsection{Nilpotent groups associated with semisimple symmetric pairs and
their sub-Riemannian geodesics\label{HighNilpotent}}

The higher dimensional analogs of the oscillator groups will be extensions
of nilpotent groups associated with semisimple symmetric pairs. We review
them next.

Let $\left( G,K\right) $ be a semisimple symmetric pair, that is, $G$ is a
semisimple Lie group and $K$ is a closed subgroup of $G$ such that $F\left(
\vartheta \right) _{o}\subset K\subset F\left( \vartheta \right) $ for some
involutive automorphism $\vartheta $ of $G$, where $F\left( \vartheta \right)
=\left\{ g\in G\mid \vartheta \left( g\right) =g\right\} $ and the subindex $o$
means taking the identity component ($\vartheta $ is uniquely determined by $K$%
). We assume that $K$ is compact.

Let $\mathfrak{g}$ and $\mathfrak{k}$ be the Lie algebras of $G$ and $K$,
respectively, and let $\mathfrak{p}$ $=\left\{ z\in \mathfrak{g}\mid d\vartheta
_{e}\left( z\right) =-z\right\} $ ($e$ denotes the identity of $G$). Then $%
\mathfrak{g}=\mathfrak{p}\oplus \mathfrak{k}$ is the Cartan decomposition of 
$\mathfrak{g}$ associated with $\mathfrak{k}$. Let $B$ one half of the
Killing form of $G$ (with this normalization, 
the mapping $F_\kappa$ in Proposition \ref{13} below is an isometry). 
We suppose additionally that the pair $\left(
G,K\right) $ is either of the compact or the noncompact type, that is, $%
\left. B\right\vert _{\mathfrak{p}\times \mathfrak{p}}$ is negative or
positive definite, respectively. We call them $\kappa $-types ($\kappa =1$
and $\kappa =-1$ for the compact and noncompact type, respectively). Let $%
G/K $ be the associated symmetric space, with the Riemannian metric induced
by the inner product $-\kappa B$ restricted to $\mathfrak{p}\equiv
T_{K}\left( G/K\right) $. Thus, $G/K$ has nonnegative or nonpositive
sectional curvature depending on whether it is of $1$-type or $\left(
-1\right) $-type.

Let $\mathfrak{n}\left( \mathfrak{g},\mathfrak{k}\right) $ be the vector
space $\mathfrak{g}$ endowed with the Lie bracket $\left[ \left( z,v\right)
,\left( \zeta ,\nu  \right) \right] =\left( 0,\left[ z,\zeta \right] _{%
\mathfrak{g}}\right) $, for $z,\zeta \in \mathfrak{p}$, $v,\nu  \in 
\mathfrak{k}$. It is the Lie algebra of the two-step nilpotent Lie group $N\left( 
\mathfrak{g},\mathfrak{k}\right) $, which is again the
vector space $\mathfrak{g}$, now with multiplication 
\begin{equation}
\left( z,v\right) \cdot \left( \zeta ,\nu  \right) =\left( z+\zeta
,v+\nu  +\tfrac{1}{2}\left[ z,\zeta \right] _{\mathfrak{g}}\right) \text{%
.}\label{multNgk}
\end{equation}

Let $\mathcal{D}$ be the left-invariant distribution on $N\left( \mathfrak{%
g,k}\right) $ determined by $\mathcal{D}_{0}=\mathfrak{p}$, that is, 
\begin{equation}
\mathcal{D}_{\left( z,v\right) }=\text{graph}\left( \tfrac{1}{2}\left. \operatorname{ad}_{z}\right\vert _{\mathfrak{p}}\right)
=\left\{\left(\zeta,\tfrac{1}{2}\left[z,\zeta\right]\right)\mid \zeta\in\mathfrak{p}\right\}.  
\label{graph}
\end{equation}%
The restriction of $-\kappa B$ to $\mathfrak{p}\equiv \mathcal{D}_{\left(
0,0\right) }$ defines a sub-Riemannian structure on $\left( N\left( 
\mathfrak{g},\mathfrak{k}\right) ,\mathcal{D}\right) $, since it is well-known that $%
\left[ \mathfrak{p},\mathfrak{p}\right] =\mathfrak{k}$.

We will see below in Example \ref{Example1} that, setting
$\mathfrak{g}_{1}=\mathfrak{o}\left( 3\right) $ and $%
\mathfrak{g}_{-1}=\mathfrak{o}\left( 1,2\right) $, then
 $\left( N\left( \mathfrak{g}%
_{\kappa },\mathfrak{o}\left( 2\right) \right) ,\mathcal{D}\right) $ is isomorphic to $%
\left( N\left( \kappa \right) ,\overline{\mathcal{D}}^{\kappa }\right)
\equiv \left( \operatorname{Hei}\left( \kappa \right) ,\mathcal{D}^{\kappa }\right) $
for $\kappa =\pm 1$. As we mentioned in the introduction,
apart from the Heisenberg group, the most studied group of the type $%
N\left( \mathfrak{g},\mathfrak{k}\right) $ is for $\mathfrak{g}=\mathfrak{o}\left( 1+m\right) $%
, $\mathfrak{k}$ $=\mathfrak{o}\left( m\right) $, that is, the free, two-step Carnot
group in $m$ generators, whose sub-Riemannian geometry has been understood
in \cite{RizziCut} (see also \cite{Myani} and \cite{AnnamriaCruz})

\bigskip

In order to find expressions for the sub-Riemannian geodesics of $N\left( 
\mathfrak{g},\mathfrak{k}\right) $ we introduce some notation.

\begin{notation}
Given $u\in \mathfrak{k}$, let $h_{u}:\mathbb{R}\rightarrow \operatorname{End}\left( 
\mathfrak{g}\right) $ be defined by $h_{u}\left( s\right) =\int_{0}^{s}\operatorname{%
Ad}_{G}\left( e^{tu}\right) ~dt$.
\end{notation}

Notice that $h_{u}\left( s\right) $ preserves $\mathfrak{k}$ and $\mathfrak{p%
}$ for all $s$. For convenience, we present another expression for the
function $h_{u}$. As usual, given an analytic function $f$ and a linear
operator $T$, $f\left( T\right) $ is defined by $\sum_{n=0}^{\infty }\frac{%
f^{\left( n\right) }\left( 0\right) }{n!}T^{n}$.

\begin{proposition}
Let $f:\mathbb{R}\rightarrow \mathbb{R}$ be the analytic function such that $%
f\left( t\right) =\frac{1}{t}\left( e^{t}-1\right) $ for $t\neq 0$; in
particular, $f\left( 0\right) =1$. Then 
$h_{u}\left( s\right) =sf\left( s\operatorname{ad}_{u}\right)$.
\end{proposition}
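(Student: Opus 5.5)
The identity follows from expanding the integrand as a power series and integrating term by term. The starting point is the standard relation $\operatorname{Ad}_G\left( e^{tu}\right) =e^{t\operatorname{ad}_{u}}$ for $u\in \mathfrak{k}$ and $t\in \mathbb{R}$. Then
\begin{equation*}
h_{u}\left( s\right) =\int_{0}^{s}\sum_{n=0}^{\infty }\frac{t^{n}}{n!}\left( \operatorname{ad}_{u}\right) ^{n}dt=\sum_{n=0}^{\infty }\frac{s^{n+1}}{\left( n+1\right) !}\left( \operatorname{ad}_{u}\right) ^{n}.
\end{equation*}
Interchanging sum and integral is legitimate because the exponential series converges uniformly in operator norm for $t$ in the compact interval between $0$ and $s$.

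Next I identify the right-hand side. The function $f$ is entire, with $f\left( t\right) =\sum_{n\geq 0}t^{n}/\left( n+1\right) !$. This holds since $e^{t}-1=\sum_{n\geq 0}t^{n+1}/\left( n+1\right) !$; dividing by $t$ gives the series, and its value at $0$ is $1$, consistent with the normalization in the statement. In particular $f^{\left( n\right) }\left( 0\right) /n!=1/\left( n+1\right) !$. Hence, by the paper's definition of $f\left( T\right)$,
\begin{equation*}
sf\left( s\operatorname{ad}_{u}\right) =s\sum_{n\geq 0}\frac{s^{n}\left( \operatorname{ad}_{u}\right) ^{n}}{\left( n+1\right) !},
\end{equation*}
which coincides with the series obtained above. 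The operator series converges absolutely because $f$ is entire, so there is no convergence issue on this side either.

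Alternatively, one can differentiate both sides in $s$ and compare. Both vanish at $s=0$. The derivative of the left side is $e^{s\operatorname{ad}_{u}}$. For the right side, the series gives $\sum_{n}s^{n}\left( \operatorname{ad}_{u}\right) ^{n}/n!=e^{s\operatorname{ad}_{u}}$. Uniqueness of solutions of this trivial ODE then yields equality.

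I do not expect a genuine obstacle. The only point deserving care is justifying the term-by-term integration, which follows from uniform convergence on compacts. One should also note that $\operatorname{ad}_{u}$ need not be invertible, since it has a kernel containing $u$. For this reason the closed form $\left( \operatorname{ad}_{u}\right) ^{-1}\left( e^{s\operatorname{ad}_{u}}-I\right)$ cannot be used directly, and the power-series definition of $f\left( T\right)$ is what makes the statement meaningful.
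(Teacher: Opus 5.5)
Your proof is correct, and your main route differs mildly from the paper's. You expand $\operatorname{Ad}_G(e^{tu})=e^{t\operatorname{ad}_u}$ as a power series, integrate term by term, and match the coefficients with the Taylor series $f(t)=\sum_{n\geq 0}t^n/(n+1)!$.

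The paper instead follows what you sketch as your alternative. It sets $\psi(s)=sf(s\operatorname{ad}_u)$, notes that $\psi(0)=h_u(0)=0$, and shows
\[
\psi'(s)=f(s\operatorname{ad}_u)+sf'(s\operatorname{ad}_u)\operatorname{ad}_u=e^{s\operatorname{ad}_u}.
\]
It obtains this derivative by transferring the scalar identity $f'(t)=\frac{1}{t}\left(e^t-f(t)\right)$ to the operator $s\operatorname{ad}_u$. You obtain it by differentiating the series coefficientwise.

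The paper's route is shorter and never writes down coefficients. It does, however, tacitly use two facts. First, the functional calculus is multiplicative and satisfies $\frac{d}{ds}f(sT)=f'(sT)\,T$. Second, the identity for $f'$ must be read as the entire-function identity $tf'(t)=e^t-f(t)$, because $s\operatorname{ad}_u$ is singular, as you point out. Your coefficient comparison is fully explicit. It needs only the uniform convergence of the exponential series on compact intervals, which you justify correctly.
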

\begin{proof}
The statement is well-known; we check its validity for the sake of
completeness: We call $\psi \left( s\right) $ the right-hand site. Since $%
h_{u}\left( 0\right) =\psi \left( 0\right) =0$, it suffices to show that $%
\operatorname{Ad}\left( e^{su}\right) =\psi ^{\prime }\left( s\right) $ for all $s$.
We compute%
\begin{equation*}
\psi ^{\prime }\left( s\right) =f\left( s\operatorname{ad}_{u}\right)
+sf^{\prime }\left( s\operatorname{ad}_{u}\right) \operatorname{ad}
_{u}=f\left( s\operatorname{ad}_{u}\right) +e^{s\operatorname{ad}
_{u}}-f\left( s\operatorname{ad}_{u}\right) =\operatorname{Ad}\left( e^{su}\right) 
\text{,}
\end{equation*}%
since $f^{\prime }\left( t\right) =\frac{1}{t}\left( e^{t}-f\left( t\right)
\right) $.
\end{proof}

For a vector $W\in \mathfrak{g}$, we denote by $W^{\flat }\in \mathfrak{g}%
^{\ast }$ the covector defined by $W^{\flat }\left( Z\right) =-\kappa
B\left( W,Z\right) $.

\begin{theorem}
\label{geoSubN}Let $z\in \mathfrak{p}$ and $u\in \mathfrak{k}$. The
sub-Riemannian geodesic $\sigma $ in $\left( N\left( \mathfrak{g},\mathfrak{k}\right) ,%
\mathcal{D}\right) $ with $\sigma \left( 0\right) =0$ and initial covector $%
\left( z+u\right) ^{\flat }$ is given explicitly by%
\begin{equation}
\sigma _{\left( z,u\right) }\left( s\right) =\left( h_{u}\left( s\right)
\left( z\right) ,\tfrac{1}{2}\int_{0}^{s}\left[ h_{u}\left( t\right) \left(
z\right) ,h_{u}^{\prime }\left( t\right) \left( z\right) \right] ~dt\right) 
\text{.}  \label{sigmaXU}
\end{equation}
\end{theorem}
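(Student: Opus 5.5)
We prove the statement by writing down the normal Hamiltonian equations for the left-invariant sub-Riemannian structure on $N(\mathfrak g,\mathfrak k)$ in left-trivialized form and integrating them explicitly. Trivialize $T^*N\cong N\times\mathfrak g^*$ by left translations and identify $\mathfrak g^*$ with $\mathfrak g$ through $W\mapsto W^\flat$; this uses the nondegenerate form $-\kappa B$ on $\mathfrak g$. Write the covector at time $s$ as $(\zeta(s)+\upsilon(s))^\flat$ with $\zeta\in\mathfrak p$ and $\upsilon\in\mathfrak k$. The Hamiltonian is $H=\tfrac12(-\kappa B)(\zeta,\zeta)$, because $\mathfrak p$ and $\mathfrak k$ are $B$-orthogonal and $\{\,\}$ the left-invariant frame of $\mathcal D$ is orthonormal for $-\kappa B|_{\mathfrak p}$.

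The horizontal equation reads $\sigma^{-1}\sigma'=\zeta$, computed in the Lie algebra $\mathfrak n(\mathfrak g,\mathfrak k)$. The vertical (Euler--Poincaré) equation reads $\tfrac{d}{ds}\lambda=\operatorname{ad}^*_{\zeta}\lambda$, with the sign convention fixed by left trivialization.

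First we evaluate the coadjoint action of $\mathfrak n$ using $B$. For $\xi\in\mathfrak p$ and $\eta=(\eta_{\mathfrak p},\eta_{\mathfrak k})$, the $\mathfrak n$-bracket is $[\zeta,\eta]_{\mathfrak n}=[\zeta,\eta_{\mathfrak p}]_{\mathfrak g}\in\mathfrak k$. Pairing with $\upsilon$ and using ad-invariance of $B$ gives $B(\upsilon,[\zeta,\eta_{\mathfrak p}])=B([\upsilon,\zeta],\eta_{\mathfrak p})$. The $\mathfrak k$-part of $\lambda$ therefore pairs trivially with $[\zeta,\cdot\,]_{\mathfrak n}$ on $\mathfrak k$, and on $\mathfrak p$ it produces $[\upsilon,\zeta]$. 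The equations become
\[
\upsilon'=0,\qquad \zeta'=\pm[\upsilon,\zeta]=\pm\operatorname{ad}_\upsilon\zeta .
\]
The sign has to be fixed by the conventions, and checking it against Proposition \ref{PropGeoHeiO} in the case $\mathfrak g=\mathfrak o(3)$ shows which sign gives $\operatorname{Ad}(e^{su})$. Hence $\upsilon\equiv u$ and $\zeta(s)=\operatorname{Ad}_G(e^{su})z$. Both $\operatorname{Ad}(e^{su})$ and $\operatorname{ad}_u$ preserve $\mathfrak p$ because $u\in\mathfrak k$, so $\zeta(s)$ stays in $\mathfrak p$.

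Next we integrate $\sigma^{-1}\sigma'=\zeta$ with $\sigma(0)=0$. Write $\sigma=(\alpha,\beta)$. From (\ref{multNgk}), left translation by $(\alpha,\beta)$ has differential $(\dot z,\dot v)\mapsto(\dot z,\dot v+\tfrac12[\alpha,\dot z])$, consistent with (\ref{graph}). So the horizontal equation means $\alpha'=\zeta$ and $\beta'=\tfrac12[\alpha,\alpha']$. This gives $\alpha(s)=\int_0^s\operatorname{Ad}(e^{tu})z\,dt=h_u(s)(z)$ and $\beta(s)=\tfrac12\int_0^s[h_u(t)z,h_u'(t)z]\,dt$, which is (\ref{sigmaXU}). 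The initial covector is $(z+u)^\flat$ by construction.

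The main obstacle is the careful derivation of the reduced Hamiltonian equations, specifically the coadjoint computation and its sign, with the correct identification of the $\mathfrak k$-component of the momentum acting by $\operatorname{ad}$ on $\mathfrak p$. We expect a sign ambiguity there. We will resolve it by the consistency check with the Heisenberg case (Proposition \ref{PropGeoHeiO}) or by recomputing directly in coordinates $H(q,p)=\tfrac12\sum\langle p,X_i(q)\rangle^2$ with $X_i$ the left-invariant fields $(e_i,\tfrac12[z,e_i])$. The remaining steps are direct integration.
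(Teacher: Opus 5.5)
Your route is the paper's: left-trivialize $T^*N$, write the covector as $(Z+U)^\flat$, and use $B(\mathfrak p,\mathfrak k)=0$ together with $B(U,[Z,\zeta])=B([U,Z],\zeta)$ to get $U\equiv u$ and $Z(s)=e^{s\operatorname{ad}_u}z$. Then solve $\sigma'=dL_\sigma(Z,0)$ with (\ref{multNgk}). The paper checks that the candidate curve satisfies this equation rather than integrating it, but that is the same computation.

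The one step you have not actually done is fixing the sign, and it is not a matter of convention. The statement fixes the initial covector $(z+u)^\flat$, and the opposite sign would give $h_{-u}$ in place of $h_u$.

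Your own equation $\lambda'=\operatorname{ad}^*_\zeta\lambda$ already settles it, provided $\operatorname{ad}^*_\zeta$ is read as the transpose of $\operatorname{ad}_\zeta$ and not as the coadjoint representation. That is, $\theta'(W)=\theta([b(\theta),W]_{\mathfrak n})$, which is the form the paper uses. It follows from $\dot q=H_p$, $\dot p=-H_q$, the convention fixed in the Appendix, and it yields $Z'=+[u,Z]$.

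Your first proposed check cannot decide the sign. Proposition \ref{PropGeoHeiO} lists the geodesics by parameters $(z,u)$ without tying $u$ to an initial covector, and the listed family is invariant under $u\mapsto -u$. Moreover, the paper itself recovers that proposition from this theorem in Proposition \ref{gammaGamma}.

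Your second option, the coordinate computation, does work. On $\operatorname{Hei}(\kappa)$, let $h_X,h_Y,h_V$ be the momenta of $\mathcal X,\mathcal Y,\partial_v$. Then $h_X'=-\kappa h_Vh_Y$, $h_Y'=\kappa h_Vh_X$ and $h_V'=0$, which corresponds to the plus sign.
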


\begin{proof} For simplicity we write $\mathfrak{n}=\mathfrak{n}\left( 
 \mathfrak{g},\mathfrak{k}\right) $ and $N=N\left( \mathfrak{g},\mathfrak{k}\right) $. In the
Appendix (Section \ref{Appendix}) we recall the definitions of geodesics for a
general Hamiltonian and of the Hamiltonian associated with a sub-Riemannian
metric. In our case, the sub-Riemannian structure on $N$ is the
left-invariant one induced by $b:\mathfrak{n}^{\ast }\rightarrow \mathfrak{n}
$, $b\left( \left( z,v\right) ^{\flat }\right) =\left( z,0\right) $ and
 the associated Hamiltonian is the left-invariant
function $H:T^{\ast }N\equiv N\times \mathfrak{n}^{\ast }$, $H\left(
p,\theta \right) =\frac{1}{2}\theta \left( b\left( \theta \right) \right) $.

It is well-known (see, for instance, Section 7.3 of the survey paper \cite%
{SachkovSurveyLie}) that under these conditions, the sub-Riemannian geodesic 
$\sigma $ is the unique smooth curve in $N$ such that the curve $\left(
\sigma ,\theta \right) $ in $T^{\ast }N$ satisfies the equations%
\begin{equation}
\sigma ^{\prime }\left( s\right) =dL_{\sigma \left( s\right) }\left( b\left(
\theta \left( s\right) \right) \right) \text{\ \ \ \ \ \ \ and \ \ \ \ \ }%
\theta ^{\prime }\left( s\right) =\left( \operatorname{ad}_{b\left( \theta
\left( s\right) \right) }\right) ^{\ast }\theta \left( s\right)
\label{sigmaPalphaP}
\end{equation}%
for all $s$, with initial conditions $\sigma \left( 0\right) =\left(
0,0\right) $ and $\theta \left( 0\right) =\left( z+u\right) ^{\flat }$ ($%
L_{g}$ denotes the left multiplication by $g$). Putting $\theta =\left(
Z,U\right) ^{\flat }$, with $Z,U$ curves in $\mathfrak{p}$ and $\mathfrak{k}$%
, respectively, the equations translate into%
\begin{eqnarray*}
\sigma ^{\prime }\left( s\right) &=&dL_{\sigma \left( s\right) }\left(
Z\left( s\right) ,0\right) \text{,} \\
B\left( \left( Z^{\prime }\left( s\right) ,U^{\prime }\left( s\right)
\right) ,\left( \zeta ,\mu \right) \right) &=&B\left( \left( Z\left(
s\right) ,U\left( s\right) \right) ,\left[ \left( Z\left( s\right) ,0\right)
,\left( \zeta ,\mu \right) \right] _{\mathfrak{n}}\right)
\end{eqnarray*}%
for all $s$ and all $\zeta \in \mathfrak{p}$, $\mu \in \mathfrak{k}$. The
second expression is equivalent to%
\begin{eqnarray*}
B\left( Z^{\prime }\left( s\right) ,\zeta \right) +B\left( U^{\prime }\left(
s\right) ,\mu \right) &=&B\left( \left( Z\left( s\right) ,U\left( s\right)
\right) ,\left( 0,\left[ Z\left( s\right) ,\zeta \right] _{\mathfrak{g}%
}\right) \right) \\
&=&B\left( U\left( s\right) ,\left[ Z\left( s\right) ,\zeta \right] _{%
\mathfrak{g}}\right) \\
&=&B\left( \left[ U\left( s\right) ,Z\left( s\right) \right] _{\mathfrak{g}%
},\zeta \right) \text{.}
\end{eqnarray*}

Since $B$ is nondegenerate, we have that $U^{\prime }$ vanishes, and hence $%
U=u$ (constant) and $Z^{\prime }=\left[ u,Z\right] $, which yields $Z\left(
s\right) =e^{s\operatorname{ad}_{u}}z$ for all $s$.

Now we are in the position of checking that $\sigma _{\left( z,u\right) }$
in (\ref{sigmaXU}) satisfies the first condition in (\ref{sigmaPalphaP}). We
call $C\left( \tau \right) =\left[ h_{u}\left( \tau \right) z,\operatorname{Ad}%
\left( e^{\tau u}\right) z\right] \in \left[\mathfrak{p},\mathfrak{p}\right]
=\mathfrak{k}$. Using (\ref{multNgk}) 
we compute
\begin{eqnarray*}
dL_{\sigma \left( s\right) }\left( Z\left( s\right) ,0\right) &=&\left. 
\frac{d}{dt}\right\vert _{0}\left( h_{u}\left( s\right)
z,\int_{0}^{s}C\left( \tau \right) ~d\tau \right) \left( te^{s\operatorname{ad}%
_{u}}z,0\right) \\
&=&\left. \frac{d}{dt}\right\vert _{0}\left( h_{u}\left( s\right) z+t\operatorname{%
Ad}\left( e^{su}\right) z,\int_{0}^{s}C\left( \tau \right) ~d\tau +\tfrac{1}{%
2}tC\left( s\right) \right) \\
&=&\left( h_{u}^{\prime }\left( s\right) z,\tfrac{1}{2}C\left( s\right)
\right) =\sigma _{\left( z,u\right) }^{\prime }\left( s\right) \text{,}
\end{eqnarray*}%
as we wanted to show.
\end{proof} 

\subsection{The oscillator groups $\operatorname{Osc}\left( G,K\right) $ for
semisimple symmetric pairs $\left( G,K\right) $}

Next we consider $\operatorname{Ad}_{G}\left( K\right) $, which is a compact
subgroup of $GL\left( \mathfrak{g}\right) $. Its elements preserve $%
\mathfrak{p}$ and $\mathfrak{k}$. Notice that in general $\operatorname{Ad}%
_{G}\left( K\right) $ is not equal to $\operatorname{Ad}_{K}\left( K\right) $, and
that $K$ may be not semisimple, even if $G$ is simple. In Examples \ref{Example1}
and \ref{Example2} below, $\operatorname{Ad}_{K}\left( K\right) $ is trivial, but
the kernel of $\left. \operatorname{Ad}_{G}\right\vert _{K}$ is trivial or
isomorphic to $\mathbb{Z}_{2}$, respectively.

\begin{definition}
The group $\operatorname{Osc}\left( G,K\right) $ is $\operatorname{Ad}_{G}\left( K\right)
\times N\left( \mathfrak{g},\mathfrak{k}\right) $ equipped with the multiplication 
\begin{equation}
\left( k,z,v\right) \cdot \left( h,\zeta ,\nu \right) =\left( kh,z+k\left(
\zeta \right) ,v+k\left( \nu \right) +\tfrac{1}{2}\left[ z,k\left( \zeta
\right) \right] \right) \text{.}  \label{multiplicationOGK}
\end{equation}
\end{definition}

We show below that the Lie algebra of $\operatorname{Osc}\left( G,K\right) $ is $%
\operatorname{osc}\left( \mathfrak{g},\mathfrak{k}\right) 
=\operatorname{ad}_{\mathfrak{g}%
}\left( \mathfrak{k}\right) \times \mathfrak{n}\left( \mathfrak{g},\mathfrak{k}\right)$, 
with the Lie bracket%
\begin{equation}
\left[ \left( \operatorname{ad}_{u},z,v\right) ,\left( \operatorname{ad}_{\mu },\zeta
,\nu  \right) \right] =\left( \operatorname{ad}_{\left[ u,\mu \right] },\left[
u,\zeta \right] -\left[ \mu ,z\right] ,\left[ z,\zeta \right] +\left[
u,\nu  \right] -\left[ \mu ,v\right] \right)  \label{bracketHD}
\end{equation}%
(notice that $\operatorname{ad}_{\mathfrak{g}}:\mathfrak{g}\rightarrow gl\left( 
\mathfrak{g}\right) $ is injective, since a semisimple Lie algebra is
centerless). This is a double extension as mentioned at the beginning of the
section. The literature covers mostly only the Lie algebra level; we have
not found an expression for the group multiplication, so we give some
details in the following proposition. For the sake of simplicity, henceforth
we sometimes write $\operatorname{Ad}$ instead of $\operatorname{Ad}_{G}$.

\begin{proposition}
$\operatorname{Osc}\left( G,K\right) $ is in fact a Lie group and its Lie algebra is 
$\operatorname{osc}\left( \mathfrak{g},\mathfrak{k}\right) $.
\end{proposition}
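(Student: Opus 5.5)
Our plan is to realize $\operatorname{Osc}\left( G,K\right) $ as a semidirect product of Lie groups and read off the Lie algebra by differentiating the multiplication (\ref{multiplicationOGK}) at the identity.

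First, since $\operatorname{Ad}\left( K\right) $ acts on $\mathfrak{g}$ by Lie algebra automorphisms of $\mathfrak{g}$ preserving $\mathfrak{p}$ and $\mathfrak{k}$, each $k\in \operatorname{Ad}\left( K\right) $ defines a map $\left( \zeta ,\nu \right) \mapsto \left( k\zeta ,k\nu \right) $ of $N\left( \mathfrak{g},\mathfrak{k}\right) $. Using $k\left[ \zeta ,\zeta ^{\prime }\right] =\left[ k\zeta ,k\zeta ^{\prime }\right] $ and (\ref{multNgk}), this map is a group automorphism of $N\left( \mathfrak{g},\mathfrak{k}\right) $. The resulting homomorphism $\operatorname{Ad}\left( K\right) \rightarrow \operatorname{Aut}\left( N\left( \mathfrak{g},\mathfrak{k}\right) \right) $ is smooth, being the restriction of a linear representation. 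The semidirect product $\operatorname{Ad}\left( K\right) \ltimes N\left( \mathfrak{g},\mathfrak{k}\right) $, with product $\left( k,n\right) \left( h,m\right) =\left( kh,n\cdot k\left( m\right) \right) $, is therefore a Lie group. Expanding $n\cdot k\left( m\right) $ with (\ref{multNgk}) for $n=\left( z,v\right) $ and $m=\left( \zeta ,\nu \right) $ gives exactly (\ref{multiplicationOGK}). This settles the first claim, with no separate check of associativity or inverses needed. Here $\operatorname{Ad}\left( K\right) $ is a compact, hence closed, subgroup of $GL\left( \mathfrak{g}\right) $, so it is a Lie group with Lie algebra $\operatorname{ad}_{\mathfrak{g}}\left( \mathfrak{k}\right) $.

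Second, for the Lie algebra we use the standard description of the Lie algebra of a semidirect product: it is $\operatorname{ad}\left( \mathfrak{k}\right) \ltimes \mathfrak{n}\left( \mathfrak{g},\mathfrak{k}\right) $, where $\operatorname{ad}_{u}$ acts on $\mathfrak{n}$ by the differential of the action, namely $\left( \zeta ,\nu \right) \mapsto \left( \left[ u,\zeta \right] ,\left[ u,\nu \right] \right) $. The resulting bracket is
\[
\left[ \left( \operatorname{ad}_{u},z,v\right) ,\left( \operatorname{ad}_{\mu },\zeta ,\nu \right) \right] =\left( \left[ \operatorname{ad}_{u},\operatorname{ad}_{\mu }\right] ,\operatorname{ad}_{u}\left( \zeta ,\nu \right) -\operatorname{ad}_{\mu }\left( z,v\right) +\left[ \left( z,v\right) ,\left( \zeta ,\nu \right) \right] _{\mathfrak{n}}\right) .
\]
We then substitute $\left[ \operatorname{ad}_{u},\operatorname{ad}_{\mu }\right] =\operatorname{ad}_{\left[ u,\mu \right] }$ and $\left[ \left( z,v\right) ,\left( \zeta ,\nu \right) \right] _{\mathfrak{n}}=\left( 0,\left[ z,\zeta \right] \right) $. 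Collecting the $\mathfrak{p}$- and $\mathfrak{k}$-components yields (\ref{bracketHD}).

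The main obstacle is getting the signs and conventions right for the semidirect-product bracket, given that the group is written with $k$ acting on the right factor. If there is any doubt, we would verify (\ref{bracketHD}) directly instead. One option is to compute the second-order term of the group commutator $\exp\left( tX\right) \exp\left( sY\right) \exp\left( -tX\right) \exp\left( -sY\right) $ using (\ref{multiplicationOGK}). The other is to compute $\frac{d}{dt}\big|_{0}\operatorname{Ad}\left( \gamma \left( t\right) \right) Y$, where the conjugation $\left( k,z,v\right) \left( e,\zeta ,\nu \right) \left( k,z,v\right) ^{-1}$ is easy to write explicitly. Either route also confirms that $\operatorname{ad}_{\mathfrak{g}}$ being injective makes the identification of the $\operatorname{ad}\left( \mathfrak{k}\right) $ factor with $\mathfrak{k}$ harmless.
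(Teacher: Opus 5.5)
Your proposal is correct. It follows the route the paper mentions but explicitly declines: ``write the multiplication as a composition of semi direct products.''

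\textbf{The paper's route.} It uses the injective homomorphism $F(k,z,v)$ into $GL(\mathbb{R}\times\mathfrak{g})$, which realizes $\operatorname{Osc}(G,K)$ as a group of affine transformations of $\mathfrak{p}\oplus\mathfrak{k}$. It then differentiates $F$ along $s\mapsto(\operatorname{Ad}(e^{su}),sz,sv)$ and checks that the resulting linear map $f$ sends (\ref{bracketHD}) to matrix commutators. The only nontrivial block of that check is the $\mathfrak{p}\to\mathfrak{k}$ one, and it reduces to the Jacobi identity in $\mathfrak{g}$.

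\textbf{What your route buys.} You get the Lie group structure and the bracket from general facts: a smooth action by automorphisms gives a Lie semidirect product, and its Lie algebra is the semidirect sum with the differentiated action. This needs no matrix computation. It also makes the double-extension structure visible: $N(\mathfrak{g},\mathfrak{k})$ is normal, and $\operatorname{Ad}(K)$ acts by restricting its linear action, which is by automorphisms of $N$ because each $\operatorname{Ad}(g)$ is an automorphism of $\mathfrak{g}$.

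\textbf{What the paper's route buys.} It gives an explicit faithful affine representation generalizing the $4\times 4$ matrices $\Phi_\kappa$. There the exponential is the matrix exponential and brackets are commutators, so no sign conventions have to be tracked.

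\textbf{Your sign worry is unfounded.} The law $(k,n)(h,m)=(kh,n\cdot k(m))$ is the usual $N\rtimes H$ law with the factors listed in the opposite order. For it, the bracket you wrote, with $+\operatorname{ad}_u(\zeta,\nu)-\operatorname{ad}_\mu(z,v)$, is the correct one. Compare the affine group, where $[(X,y),(X',y')]=([X,X'],Xy'-X'y)$. Substituting therefore gives exactly (\ref{bracketHD}), and your fallback computations are unnecessary.
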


\begin{proof} One option is to write the multiplication as a composition of
semi direct products. Instead, we resort to matrices. 
The group $\operatorname{Osc}\left( G,K\right) $ will end up being the group of
affine transformations of $\mathfrak{p}\oplus \mathfrak{k}$ of the form%
\begin{equation*}
\left( 
\begin{array}{c}
\zeta \\ 
\nu %
\end{array}%
\right) \mapsto \left( 
\begin{array}{c}
z \\ 
v%
\end{array}%
\right) +\left( 
\begin{array}{cc}
\left. k\right\vert _{\mathfrak{p}} & 0 \\ 
\frac{1}{2}\operatorname{ad}_{z}\circ \left. k\right\vert _{\mathfrak{p}} & 
\left. k\right\vert _{\mathfrak{k}}%
\end{array}%
\right) \left( 
\begin{array}{c}
\zeta \\ 
\nu %
\end{array}%
\right)
\end{equation*}%
for some $k\in \operatorname{Ad}\left( K\right) $, $z\in \mathfrak{p}$, $v\in 
\mathfrak{k}$. Indeed, straightforward computations show that $F:\operatorname{Osc}%
\left( G,K\right) \rightarrow GL\left( \mathbb{R\times }\mathfrak{g}\right) $%
,%
\begin{equation*}
F\left( k,z,v\right) =\left( 
\begin{array}{ccc}
1 & 0 & 0 \\ 
z & \left. k\right\vert _{\mathfrak{p}} & 0 \\ 
v & \frac{1}{2}\operatorname{ad}_{z}\circ \left. k\right\vert _{\mathfrak{p}%
} & \left. k\right\vert _{\mathfrak{k}}%
\end{array}%
\right)
\end{equation*}%
is injective and preserves the corresponding multiplications. For the proof
of the assertion about $\operatorname{osc}\left( \mathfrak{g},\mathfrak{k}\right) $,
the relevant computations are as follows. First, that%
\begin{equation*}
\left. \frac{d}{ds}\right\vert _{0}\left( 
\begin{array}{ccc}
1 & 0 & 0 \\ 
sz & \left. \operatorname{Ad}\left( e^{su}\right) \right\vert _{\mathfrak{p}} & 0 \\ 
sv & \tfrac{1}{2}\operatorname{ad}_{sz}\circ \left. \operatorname{Ad}\left(
e^{su}\right) \right\vert _{\mathfrak{p}} & \left. \operatorname{Ad}\left(
e^{su}\right) \right\vert _{\mathfrak{k}}%
\end{array}%
\right) =\left( 
\begin{array}{ccc}
0 & 0 & 0 \\ 
z & \left. \operatorname{ad}_{u}\right\vert _{\mathfrak{p}} & 0 \\ 
v & \frac{1}{2}\left. \operatorname{ad}_{z}\right\vert _{\mathfrak{p}} & 
\left. \operatorname{ad}_{u}\right\vert _{\mathfrak{k}}%
\end{array}%
\right) \text{,}
\end{equation*}%
holds for $u,v\in \mathfrak{k}$ and $z\in \mathfrak{p}$, since $\operatorname{Ad}%
\left( e^{su}\right) =e^{s\operatorname{ad}_{u}}$. In addition, that if the 
right-hand side is denoted by $f\left( \operatorname{ad}_{u},z,v\right) $, 
then $f:\operatorname{osc}\left( 
\mathfrak{g},\mathfrak{k}\right) \rightarrow gl\left( \mathbb{R}\times \mathfrak{g}%
\right) $ preserves the Lie brackets, which is not difficult to check.
\end{proof} 

The group $\operatorname{Osc}\left( G,K\right) $ admits a canonical bi-invariant
pseudo-Riemannian metric $g$ of signature $\left( \dim \mathfrak{k},\dim 
\mathfrak{p}+\dim \mathfrak{k}\right) $, given at the identity by%
\begin{equation}
\left( -\kappa \right) g\left( \left( \operatorname{ad}_{u},z,v\right) ,\left( \operatorname{%
ad}_{\mu },\zeta ,\nu  \right) \right) =B\left( z,\zeta \right) +B\left(
u,\nu  \right) +B\left( v,\mu \right) \text{.}  \label{bi}
\end{equation}%
The bilinear form is nondegenerate since $B\left( \mathfrak{p,k}\right) =0$
and $\left. B\right\vert _{\mathfrak{p}\times \mathfrak{p}}$ and $\left.
B\right\vert _{\mathfrak{k}\times \mathfrak{k}}$ are nondegenerate. This is
part of the elementary theory of double extensions, but can be easily
verified by showing, using (\ref{bracketHD}), that $\operatorname{ad}_{W}$ 
is skew-symmetric with respect to $%
g $ for any $W\in \operatorname{osc}\left( \mathfrak{g},\mathfrak{k}\right) $, which
follows from the same property for $\mathfrak{g}$ with respect to $B$. 

\medskip

Next we give two examples that show that the groups introduced in this
section generalize $\operatorname{Osc}\left( \kappa \right) $. They are associated
to symmetric pairs that give rise to the same symmetric spaces, namely the
two-sphere and the hyperbolic plane; the first one is an effective
presentation; the second, not.

\begin{example} \label{Example1} \upshape
	For $\kappa =\pm 1$, let $\beta _{\kappa }$ be the quadratic
form on $\mathbb{R}^{3}$ given by $\beta _{\kappa }\left( x,y\right) =\kappa
x^{2}+y^{2}+z^{2}$ and let $G_{k}$ be the identity component of the
automorphism group of $\beta _{\kappa }$, that is, $G_{1}=SO\left( 3\right) $
and $G_{-1}=O_{o}\left( 1,2\right) $. Let $K=\left\{ 1\right\} \times
SO\left( 2\right) \equiv SO\left( 2\right) $. We have that $\left( G_{\kappa
},K\right) $ is a semisimple symmetric pair and $K$ is compact (in fact, $%
G_{\kappa }/K$ is a symmetric presentation of the sphere $S^{2}$ and the
hyperbolic plane $H^{2}$, for $\kappa =1$ and $\kappa =-1$, respectively).
Let $\mathfrak{g}_{\kappa }$ and $\mathfrak{k}$ be the Lie algebras of $%
G_{\kappa }$ and $K$, respectively, and let $\mathfrak{g}_{\kappa }=%
\mathfrak{p}_{\kappa }+\mathfrak{k}$ be the corresponding Cartan
decomposition. In order to describe them we introduce some notation: For $%
u\in \mathbb{R}$ and a column vector $z\in \mathbb{R}^{2}$ let 
\begin{equation*}
r\left( u\right) =\left( 
\begin{array}{cc}
\cos u & -\sin u \\ 
\sin u & \cos u%
\end{array}%
\right) \text{,\ \ \ }m_{\kappa }\left( z\right) =\left( 
\begin{array}{cc}
0 & -\kappa z^{T} \\ 
z & 0_{2}%
\end{array}%
\right) \ \ \ \text{and }\ j=\left( 
\begin{array}{cc}
0 & -1 \\ 
1 & 0%
\end{array}%
\right) \text{,}
\end{equation*}%
where $0_{2}$ is the $2\times 2$ zero matrix and $T$ denotes transpose. Then%
\begin{equation*}
K=\left\{ R\left( u\right) \mid u\in \mathbb{R}\right\} \text{,\ \ \ \ }%
\mathfrak{p}_{\kappa }=\left\{ m_{\kappa }\left( z\right) \mid z\in \mathbb{R%
}^{2}\right\} \text{ \ \ \ and \ \ \ \ }\mathfrak{k}=\left\{ J\left(
v\right) \mid v\in \mathbb{R}\right\} \text{,}
\end{equation*}%
where $R\left( u\right) =\operatorname{diag}\left( 1,r\left( u\right) \right) $ and $%
J\left( v\right) =\operatorname{diag}\left( 0,vj\right) $.
\end{example}

\begin{proposition}
The mapping $F_{\kappa }:\operatorname{Osc}\left( G_{\kappa },K\right) \rightarrow 
\operatorname{Osc}\left( \kappa \right) $,%
\begin{equation}
F_{\kappa }\left( \operatorname{Ad}\left( R\left( u\right) \right) ,m_{\kappa
}\left( z\right) ,J\left( v\right) \right) =\Phi _{\kappa }\left(
u,z,v\right)  \label{F1k}
\end{equation}%
is a well-defined Lie group isomorphism, as well as an isometry.

In particular, the mapping $\operatorname{Hei}\left( \kappa \right) \rightarrow
N\left( \mathfrak{g}_{\kappa },\mathfrak{k} \right) $, $\left(
z,v\right) \mapsto \left( m_{\kappa }\left( z\right) ,J\left( v\right)
\right) $ is an isometry of sub-Riemannian manifolds. \label{13}
\end{proposition}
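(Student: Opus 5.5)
The plan is a direct computation in three parts: well-definedness and bijectivity, the homomorphism property via the multiplication formulas (\ref{multiplicationOGK}) and (\ref{multipliPhi}), and the isometry property by comparing (\ref{bi}) with (\ref{biOsc}) at the identity.

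\textbf{Action of $\operatorname{Ad}(R(u))$.} First I compute the adjoint action of $K$ on $\mathfrak{g}_\kappa$. Conjugating by $R(u)=\operatorname{diag}(1,r(u))$ gives
\[
R(u)\,m_\kappa(z)\,R(u)^{-1}=m_\kappa(r(u)z),
\]
because $z^T r(u)^{-1}=(r(u)z)^T$. Since $r(u)$ commutes with $j$, we also get $R(u)J(v)R(u)^{-1}=J(v)$. Hence $\operatorname{Ad}(R(u))$ acts on $\mathfrak{p}_\kappa\cong\mathbb{R}^2$ as the rotation $r(u)$ and trivially on $\mathfrak{k}$.

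\textbf{Well-definedness and bijectivity.} By the previous step, $\operatorname{Ad}(R(u))=\operatorname{Ad}(R(u'))$ forces $r(u)=r(u')$, that is, $u-u'\in 2\pi\mathbb{Z}$. Since $\Phi_\kappa$ is $2\pi$-periodic in $u$, $F_\kappa$ is well defined. It is injective because $\Phi_\kappa(u,z,v)$ determines $z$, $v$ and $u$ modulo $2\pi$. Surjectivity is clear.

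\textbf{Bracket on $\mathfrak{p}_\kappa$.} For the homomorphism property I compute $[m_\kappa(z),m_\kappa(\zeta)]$. The first row and column cancel. The lower $2\times 2$ block is $-\kappa(z\zeta^T-\zeta z^T)$. Using $z\zeta^T-\zeta z^T=-\det(z,\zeta)\,j$, this gives
\[
[m_\kappa(z),m_\kappa(\zeta)]=J\bigl(\kappa\det(z,\zeta)\bigr).
\]

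\textbf{Homomorphism property.} I substitute this bracket, together with $k=\operatorname{Ad}(R(u))$ acting by $r(u)$ on $\mathfrak{p}$ and trivially on $\mathfrak{k}$, into (\ref{multiplicationOGK}). The product becomes
\[
\bigl(\operatorname{Ad}R(u+\mu),\ m_\kappa(z+e^{iu}\zeta),\ J\bigl(v+\nu+\tfrac12\kappa\det(z,e^{iu}\zeta)\bigr)\bigr).
\]
This matches (\ref{multipliPhi}) exactly, so $F_\kappa$ is a group isomorphism. It is a diffeomorphism because it is a bijective homomorphism given by smooth formulas, with a smooth inverse in the local coordinates $(u,z,v)$. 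Its differential at the identity is $(\operatorname{ad}_{J(u)},m_\kappa(z),J(v))\mapsto\phi_\kappa(u,z,v)$.

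\textbf{Isometry.} Both metrics are bi-invariant, so it suffices to compare (\ref{bi}) and (\ref{biOsc}) at the identity. The Killing form of $\mathfrak{o}(3)$ or $\mathfrak{o}(1,2)$ is $(n-2)\operatorname{tr}(XY)=\operatorname{tr}(XY)$, so $B=\frac12\operatorname{tr}$. Then:
\begin{itemize}
\item[(i)] $\operatorname{tr}(m_\kappa(z)m_\kappa(\zeta))=-2\kappa\, z\cdot\zeta$, so $B(m_\kappa(z),m_\kappa(\zeta))=-\kappa\, z\cdot\zeta$;
\item[(ii)] $B(J(u),J(v))=\frac12\operatorname{tr}(uv\,j^2)=-uv$.
\end{itemize}
Plugging these into (\ref{bi}) gives $-\kappa g=-\kappa(x\xi+y\eta)-u\nu-v\mu$. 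Hence $g=x\xi+y\eta+\kappa(u\nu+v\mu)$, which is (\ref{biOsc}).

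\textbf{Heisenberg statement.} Restricting $F_\kappa$ to $u=0$ gives an isomorphism $N(\mathfrak{g}_\kappa,\mathfrak{k})\to N(\kappa)$. Composing its inverse with the isomorphism $\operatorname{Hei}(\kappa)\to N(\kappa)$ yields the stated map. It is a group isomorphism and sends $\mathfrak{p}_\kappa$ onto $\operatorname{span}\{X,Y\}$. By (i), the metric $-\kappa B|_{\mathfrak{p}}$ is the Euclidean product $z\cdot\zeta$, for which $\{X,Y\}$ is orthonormal. Both distributions and metrics are left-invariant, so this is an isometry of sub-Riemannian manifolds.

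\textbf{Main obstacle.} The only delicate point is sign bookkeeping: the sign of $z\zeta^T-\zeta z^T$ relative to $j$, and the normalization of $B$. These are exactly what make $\kappa$ appear correctly both in the bracket and in the metric, so I would check them on $z=e_1$, $\zeta=e_2$ first.
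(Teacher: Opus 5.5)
Your proposal is correct and takes essentially the same approach as the paper. The paper also deduces the homomorphism property from (\ref{multiplicationOGK}), using injectivity of $\operatorname{Ad}$ on $K$ and the three identities (\ref{f1})--(\ref{f3}) that you compute, and it proves the isometry by evaluating (\ref{bi}) at the identity with $2B(A,B)=\operatorname{tr}(AB)$ to obtain $\left\vert z\right\vert^{2}+2\kappa uv$; your direct argument for the Heisenberg statement just spells out what the paper delegates to Section \ref{SubSec1}.
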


\begin{proof} It can be deduced from (\ref{multiplicationOGK}), using that 
$\left. \operatorname{Ad}\right\vert _{K}$ is injective and the following identities:
\begin{eqnarray}
\operatorname{Ad}\left( R\left( u\right) \right) m_{\kappa }\left( z\right)
&=&m_{\kappa }\left( r\left( u\right) z\right) \text{,}  \label{f1} \\
\operatorname{Ad}\left( R\left( u\right) \right) \left( J\left( v\right) \right)
&=&J\left( v\right) \text{,}  \label{f2} \\
\left[ m_{\kappa }\left( z\right) ,m_{\kappa }\left( \zeta \right) \right]
&=&J\left( \kappa \det \left( z,\zeta \right) \right) \text{,}  \label{f3}
\end{eqnarray}%
which hold for all $u,v\in \mathbb{R}$, $z,\zeta \in \mathbb{R}^{2}$ and are
not difficult to check.

Next we deal with the second assertion. We call $\left\Vert X\right\Vert
=\left\langle X,X\right\rangle $, as usual in pseudo-Riemannian settings. We
have that $\left( dF_{\kappa }\right) _{\left( I_{\mathfrak{g}},0,0\right)
}\left( X\right) =\phi _{\kappa }\left( u,z,v\right) $, where $X=\left( 
\operatorname{ad}_{J\left( u\right) },m_{\kappa }\left( z\right) ,J\left( v\right)
\right) $.

For $\mathfrak{g}_{\kappa }$, the Killing form applied to two matrices $A$
and $B$ is given by $2B\left( A,B\right) =\operatorname{tr}\left( AB\right) $. 
Now, by (\ref{bi}) and (\ref{biOsc}), 
\begin{eqnarray*}
\left\Vert X\right\Vert &=&-\kappa B\left( m_{\kappa }\left( z\right)
,m_{\kappa }\left( z\right) \right) -\kappa 2B\left( J\left( u\right)
,J\left( v\right) \right) \\
&=&-\kappa \tfrac{1}{2}\operatorname{tr}\left( m_{\kappa }\left( z\right)
^{2}\right) -\kappa \operatorname{tr}\left( J\left( u\right) J\left( v\right) \right)
\\
&=&\left\vert z\right\vert ^{2}-\left( -2\kappa uv\right) =\left\vert
z\right\vert ^{2}+2\kappa uv \\
&=&\left\Vert \phi _{\kappa }\left( u,z^{T},v\right) \right\Vert \text{,}
\end{eqnarray*}%
as desired. For the last assertion, see Section \ref{SubSec1}.
\end{proof} 

\begin{example} \label{Example2} \upshape
For $\kappa =\pm 1$, let $h_{\kappa }$ be the Hermitian form
on $\mathbb{C}^{2}$ given by $h_{\kappa }\left( z,w\right) =\kappa
\left\vert z\right\vert ^{2}+\left\vert w\right\vert ^{2}$. Let $G_{\kappa }$
be the group of automorphisms of $h_{\kappa }$ with determinant one, that is 
$G_{1}=SU\left( 2\right) $ and $G_{-1}=SU\left( 1,1\right) $. For $z\in 
\mathbb{C}$ we set%
\begin{equation*}
D\left( z\right) =\operatorname{diag}\left( \bar{z},z\right) \text{ \ \ \ \ \ \ and
\ \ \ \ \ }p_{\kappa }\left( z\right) =\left( 
\begin{array}{cc}
0 & -\kappa \bar{z} \\ 
z & 0%
\end{array}%
\right) \text{,}
\end{equation*}%
both in $\mathbb{C}^{2\times 2}$. Let $K=\left\{ D\left( u\right) \mid u\in
S^{1}\subset \mathbb{C}\right\} \equiv U\left( 1\right) \equiv S^{1}$. Let $%
\mathfrak{g}_{\kappa }$ and $\mathfrak{k}$ be the Lie algebras of $G_{\kappa
}$ and $K$, respectively. Then $\mathfrak{g}_{\kappa }=\mathfrak{p}_{\kappa
}+\mathfrak{k}$ is the corresponding Cartan decomposition, where%
\begin{equation*}
\mathfrak{p}=\left\{ p_{\kappa }\left( z\right) \mid z\in \mathbb{C}\right\} 
\text{ \ \ \ and \ \ \ }\mathfrak{k}=\left\{ D\left( vi\right) \mid v\in 
\mathbb{R}\right\} \text{.}
\end{equation*}
\end{example}

\begin{proposition}
The mapping $F_{\kappa }:\operatorname{Osc}\left( G_{\kappa },U\left( 1\right)
\right) \rightarrow \operatorname{Osc}\left( \kappa \right) $,%
\begin{equation*}
F_{\kappa }\left( \operatorname{Ad}\left( D\left( e^{it}\right) \right) ,p_{\kappa
}\left( z\right) ,D\left( vi\right) \right) =\Phi \left( 2t,z,\tfrac{1}{2}%
v\right)
\end{equation*}%
is a well-defined Lie group isomorphism and a homothety of ratio $\frac{1}{4}
$.
\end{proposition}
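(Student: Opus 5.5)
The plan is to mimic the proof of Proposition \ref{13}: check that $F_\kappa$ is well defined and bijective, check the multiplicativity using (\ref{multiplicationOGK}) and (\ref{multipliPhi}), and finally compare the metrics at the identity. Bi-invariance of both metrics, together with $F_\kappa$ being a homomorphism, then gives the homothety everywhere.

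\emph{Well-definedness and bijectivity.} For $w\in\mathbb{C}$, a direct computation gives
\[
D(e^{it})\,p_\kappa(z)\,D(e^{-it})=p_\kappa(e^{2it}z),\qquad \operatorname{Ad}(D(e^{it}))\,D(vi)=D(vi).
\]
Hence $\operatorname{Ad}(D(e^{it}))$ depends only on $e^{2it}$, and $\operatorname{Ad}(D(e^{it}))$ is the identity exactly when $e^{2it}=1$. This is the $\mathbb{Z}_2$-kernel $\{\pm I\}$ mentioned before Example \ref{Example1}. So $\operatorname{Ad}(K)$ is parametrized by $e^{2it}\in S^1$, and $2t$ is well defined modulo $2\pi$. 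Since $\Phi_\kappa(u,z,v)=\Phi_\kappa(u+2\pi,z,v)$, the map $F_\kappa$ is well defined. Injectivity and surjectivity are then clear, because $\Phi_\kappa$ is injective on $(\mathbb{R}/2\pi)\times\mathbb{C}\times\mathbb{R}$.

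\emph{Homomorphism.} The remaining identity to check is the bracket
\[
[p_\kappa(z),p_\kappa(\zeta)]=D\bigl(2\kappa\det(z,\zeta)\,i\bigr),
\]
which follows since $\bar z\zeta-\bar\zeta z=2i\det(z,\zeta)$. Plugging the three identities into (\ref{multiplicationOGK}) for $(\operatorname{Ad}D(e^{it}),p_\kappa(z),D(vi))\cdot(\operatorname{Ad}D(e^{i\tau}),p_\kappa(\zeta),D(\nu i))$, one gets the triple $\bigl(\operatorname{Ad}D(e^{i(t+\tau)}),\,p_\kappa(z+e^{2it}\zeta),\,D\bigl((v+\nu+\kappa\det(z,e^{2it}\zeta))i\bigr)\bigr)$. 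Applying $F_\kappa$ yields $\Phi\bigl(2t+2\tau,\,z+e^{2it}\zeta,\,\tfrac12 v+\tfrac12\nu+\tfrac12\kappa\det(z,e^{2it}\zeta)\bigr)$. This is precisely the product given by (\ref{multipliPhi}) with $u=2t$, and the factor $\tfrac12$ on $v$ is exactly what makes the determinant term match.

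\emph{Homothety.} For $\mathfrak{su}(2)$ and $\mathfrak{su}(1,1)$ the Killing form is $4\operatorname{tr}(XY)$, so $B=2\operatorname{tr}$. This normalization is the step where an error is most likely, and I would double-check it, for instance against $\mathfrak{sl}(2,\mathbb{C})$. With this, $\operatorname{tr}(p_\kappa(z)^2)=-2\kappa|z|^2$ and $\operatorname{tr}(D(ui)D(vi))=-2uv$. By (\ref{bi}), the tangent vector $X=(\operatorname{ad}_{D(ui)},p_\kappa(z),D(vi))$ has
\[
\|X\|=4|z|^2+8\kappa uv.
\]
Differentiating $F_\kappa$ along $s\mapsto(\operatorname{Ad}D(e^{isu}),p_\kappa(sz),D(svi))$ gives $dF_\kappa(X)=\phi_\kappa(2u,z,\tfrac12 v)$. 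By (\ref{biOsc}), its norm is $|z|^2+2\kappa(2u)(\tfrac12v)=|z|^2+2\kappa uv=\tfrac14\|X\|$. Thus $F_\kappa$ is a homothety of ratio $\frac14$ at the identity, and hence everywhere.
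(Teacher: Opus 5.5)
Your proposal is correct and follows essentially the same route as the paper. Both establish well-definedness and injectivity from the fact that $\operatorname{Ad}(D(e^{it}))$ depends only on $e^{2it}$. Both then plug $\operatorname{Ad}(D(u))\,p_\kappa(z)=p_\kappa(u^2z)$ and $[p_\kappa(z),p_\kappa(\zeta)]=D(2\kappa\det(z,\zeta)\,i)$ into (\ref{multiplicationOGK}), and finish by comparing norms at the identity with $B=2\operatorname{tr}$. You also write out explicitly the multiplicativity check and the extension of the homothety from the identity to the whole group via bi-invariance, both of which the paper leaves implicit.
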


\begin{proof} We comment that the factors $2$ and $\frac{1}{2}$ come from
the fact the action of $G_{\kappa }$ on $S^{2}$ or $H^{2}$ is not
effective. The good definition and the injectivity of $F_\kappa$ follow from the
following arguments: It is easy to check that $\operatorname{Ad}\left( k\right) =%
\operatorname{Ad}\left( h\right) $ if and only if $h=\pm k$. This implies that $%
\operatorname{Ad}\left( D\left( e^{it}\right) \right) =\operatorname{Ad}\left( D\left(
e^{i\tau }\right) \right) $ if and only if $\tau -t\in \pi \mathbb{Z}$ if
and only if $\Phi \left( 2t,z,v\right) =\Phi \left( 2\tau ,z,v\right) $.

Now, for $u\in S^{1}$ and $z\in \mathbb{C}$, we compute%
\begin{equation*}
\left( 
\begin{array}{cc}
\bar{u} & 0 \\ 
0 & u%
\end{array}%
\right) \left( 
\begin{array}{cc}
0 & -\kappa \bar{z} \\ 
z & 0%
\end{array}%
\right) \left( 
\begin{array}{cc}
u & 0 \\ 
0 & \bar{u}%
\end{array}%
\right) =\left( 
\begin{array}{cc}
0 & -\kappa \bar{z}\bar{u}^{2} \\ 
u^{2}z & 0%
\end{array}%
\right) \text{.}
\end{equation*}%
Hence, $\operatorname{Ad}\left( D\left( u\right) \right) p\left( z\right) =p\left(
u^{2}z\right) $. Similarly, for $z,\omega \in \mathbb{C}$,%
\begin{equation}
\left[ p_{\kappa }\left( z\right) ,p_{\kappa }\left( \omega \right) \right]
=D\left( \kappa \left( \bar{z}\omega -\bar{\omega}z\right) \right) =D\left(
2\kappa \operatorname{Im}\left( \bar{z}\omega \right) i\right) \text{.}  \label{ppD}
\end{equation}%
Using this (the last equation for $\omega =\zeta e^{2it}$) and (\ref%
{multiplicationOGK}), one can conclude that $F_{\kappa }$ is an isomorphism
of Lie groups.

We have that $\left( dF_{\kappa }\right) _{\left( I_{\mathfrak{g}%
},0,0\right) }\left( X\right) =\phi _{\kappa }\left( 2u,z,\frac{1}{2}%
v\right) $, where $X=\left( \operatorname{ad}_{D\left( ui\right) },p_{\kappa }\left(
z\right) ,D\left( vi\right) \right) $. For $\mathfrak{g}_{\kappa }$, the
Killing form applied to two matrices $A$ and $B$ is given by $2B\left(
A,B\right) =4\operatorname{tr}\left( AB\right) $. Then, by (\ref{bi}) and (\ref{biOsc}), 
\begin{eqnarray*}
\left\Vert X\right\Vert &=&-\kappa B\left( p_{\kappa }\left( z\right)
,p_{\kappa }\left( z\right) \right) -\kappa 2B\left( D\left( ui\right)
,D\left( vi\right) \right) \\
&=&-2\kappa \operatorname{tr}\left( p_{\kappa }\left( z\right) p_{\kappa }\left( z\right)
\right) -4\kappa \operatorname{tr}\left( D\left( ui\right) D\left( vi\right) \right) \\
&=&4\left\vert z\right\vert ^{2}+8\kappa uv=4\left( \left\vert z\right\vert
^{2}-2\left( 2u\tfrac{1}{2}v\right) \right) \\
&=&4\left\Vert \phi _{\kappa }\left( 2u,z,\tfrac{1}{2}v\right) \right\Vert 
\text{.}
\end{eqnarray*}%
Thus, the last assertion is true.
\end{proof} 

\subsection{$\operatorname{Osc}\left( G,K\right) $ as finite coverings of Stiefel
bundles in the classical rank-one case\label{Stiefel}}

In this section we present a third example, more elaborate, which
generalizes the presentation of $\operatorname{Osc}\left( \kappa \right) $ as the
unit circle bundle $\mathcal{D}^{\kappa ,1}$ in (\ref{UnitCB}) and allows us
to visualize $\operatorname{Osc}\left( G,K\right) $ in particular relevant cases,
specifically, when $\left( G,K\right) $ is the standard symmetric pair of a
compact classical rank-one symmetric space.

Let $\mathbb{F}$ $=\mathbb{R}$, $\mathbb{C}$ or the quaternions $\mathbb{H}$
and let $\left( G_{\mathbb{F}},K_{\mathbb{F}}\right) $ be the symmetric pair
of the corresponding simply connected two-point homogeneous space, that is, $%
\left( SO_{1+m},SO_{m}\right) $, $\left( SU_{1+m},U_{m}\right) $, $\left(
Sp_{1+m},Sp_{1}\times Sp_{m}\right) $, for $\mathbb{F}=\mathbb{R}$, $\mathbb{%
C}$ and $\mathbb{H}$, respectively. Let $\mathfrak{g}_{\mathbb{F}}=\mathfrak{%
p}_{\mathbb{F}}\oplus \mathfrak{k}_{\mathbb{F}}$ be the Cartan decomposition
of the Lie algebra of $G_{\mathbb{F}}$, where%
\begin{eqnarray}
\mathfrak{k}_{\mathbb{F}} &=&\left\{ \operatorname{diag}\left( a,A\right) \mid a\in 
\operatorname{Im}\mathbb{F}\text{, }A\in \mathbb{F}^{m\times m}\text{, }\bar{A}%
^{T}=-A\right\} \text{,}  \notag \\
\mathfrak{p}_{\mathbb{F}} &=&\left\{ p\left( z\right) =\left( 
\begin{array}{cc}
0 & -\bar{z}^{T} \\ 
z & 0_{m}%
\end{array}%
\right) \mid z\in \mathbb{F}^{m}\right\} \text{,}  \label{pz}
\end{eqnarray}%
for $\mathbb{F}=\mathbb{R}$ and $\mathbb{H}$. The extra condition $a=-\operatorname{%
tr}\left( A\right) $ must be added to the definition of $\mathfrak{k}_{%
\mathbb{C}}$. We call $N_{\mathbb{F}}=N\left( \mathfrak{g}_{\mathbb{F}},%
\mathfrak{k}_{\mathbb{F}}\right) $.

Let $\mathcal{D}$ be the distribution (\ref{graph}) on $N_{\mathbb{F}}$.
Notice that there is a canonical identification of $\mathfrak{p}_{\mathbb{F}%
} $ with $\mathbb{F}^{m}$. For each $p\in N_{\mathbb{F}}$, consider on $%
\mathcal{D}_{p}\subset \mathfrak{p}_{\mathbb{F}}\times \mathfrak{k}_{\mathbb{%
F}}$ the structure of vector space over $\mathbb{F}$ (on the right, as
usual, for $\mathbb{F}=\mathbb{H}$), the Hermitian inner product and the
orientation induced from $\mathfrak{p}_{\mathbb{F}}$ by the projection $\pi
_{\left( z,v\right) }:\mathcal{D}_{\left( z,v\right) }\rightarrow \mathfrak{p%
}_{\mathbb{F}}\equiv \mathbb{F}^{m}$.

In order to define Stiefel bundles for the sub-Riemannian structures on $%
\left( n_{\mathbb{F}},\mathcal{D}\right) $, we introduce some notation. Let
us denote $M=\dim _{\mathbb{R}}\mathbb{F}$, that is, $M=1,2$ or $4$ for $%
\mathbb{F}$ $=\mathbb{R}$, $\mathbb{C}$ or $\mathbb{H}$, respectively. For a
vector space over $\mathbb{C}$ or $\mathbb{H}$ we always consider the
orientation induced by the scalar multiplication.

Let $V$ be an oriented vector space over $\mathbb{F}$ (for $\mathbb{H}$, as
a right module, as usual) with $\dim _{\mathbb{H}}V=m$. Let $g_{\mathbb{F}}$
be a positive definite $\mathbb{F}$-Hermitian inner product and let $g=\operatorname{%
Re}\left( g_{\mathbb{F}}\right) $ be the corresponding real inner product on 
$V$.

\begin{definition}
A positively oriented orthonormal ordered basis $\mathcal{B}$ of $V$ for the 
\emph{real} inner product $g$ is said to be real $\mathbb{F}$-Hermitian if
there exists a direct sum $V=L_{1}\oplus \cdots \oplus L_{m}$ of oriented $%
\mathbb{F}$-lines such that $\mathcal{B}$ is the juxtaposition of positively
oriented real orthonormal bases of $L_{s}$, for $s=1,\dots ,m$. In other
words, a positively oriented basis $\mathcal{B}=\left\{ W_{1},\dots
,W_{Mm}\right\} $ is real $\mathbb{F}$-Hermitian if $\left\{ W_{Ms+1},\dots
,W_{Ms+M}\right\} $ is a positively oriented orthonormal basis of an $%
\mathbb{F}$-line $L_{s}$ for $s=0,\dots ,m-1$.

Let $\operatorname{St}\left( n_{\mathbb{F}},\mathcal{D}\right) $ be the set of all
real $\mathbb{F}$-Hermitian bases of $\mathcal{D}_{p}$ with $p\in \mathfrak{n%
}_{\mathbb{F}}$.
\end{definition}


The preceding definition is an effort for the sake of uniformity, but note
that a real $\mathbb{R}$-Hermitian basis is just a positively oriented
 orthonormal basis. Also, a real $\mathbb{C}$-Hermitian basis can be
thought of as a usual orthonormal basis for the $\mathbb{C}$-Hermitian inner
product, since given a unit vector $U$ in a complex line $L$, there exists a
unique $W$ such that $\left\{ U,W\right\} $ is a positively oriented
orthonormal basis of $L$. Besides, the requirement for the basis to be
positively oriented is void for $\mathbb{F}=\mathbb{C}$ and $\mathbb{H}$.

\begin{proposition}
\label{PropStiefel}The group $\operatorname{Osc}\left( SO_{1+m},SO_{m}\right) $ acts
simply transitively on $\operatorname{St}\left( n_{\mathbb{R}},\mathcal{D}\right) $.

The group $\operatorname{Osc}\left( SU_{1+m},U_{m}\right) $ acts transitively on $%
\operatorname{St}\left( n_{\mathbb{C}},\mathcal{D}\right) $ with isotropy subgroup
isomorphic to $\mathbb{Z}_{m+1}$.

The group $\operatorname{Osc}\left( Sp_{1+m},Sp_{1}\times Sp_{m}\right) $ acts
transitively on $\operatorname{St}\left( n_{\mathbb{H}},\mathcal{D}\right) $ with
isotropy subgroup isomorphic to $\mathbb{Z}_{2}$.
\end{proposition}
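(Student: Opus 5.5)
The plan is to imitate the proof of Proposition \ref{identificacion}, using the matrix realization $F$ of $\operatorname{Osc}(G,K)$ as affine transformations of $\mathfrak{g}=\mathfrak{p}\oplus\mathfrak{k}$ from the proof that $\operatorname{Osc}(G,K)$ is a Lie group. The element $(k,z,v)$ acts on $N_{\mathbb{F}}$ by
\[
T_{(k,z,v)}(\zeta,\nu)=(z,v)\cdot k(\zeta,\nu)=\bigl(z+k\zeta,\;v+k\nu+\tfrac12[z,k\zeta]\bigr),
\]
that is, by $L_{(z,v)}\circ k$. Since $k\in\operatorname{Ad}_G(K)$ is a Lie algebra automorphism of $\mathfrak{g}$ preserving $\mathfrak{p}$ and $\mathfrak{k}$, it is an automorphism of $N_{\mathbb{F}}$ (see (\ref{multNgk})). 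By (\ref{multiplicationOGK}), $(k,z,v)\mapsto T_{(k,z,v)}$ is a homomorphism. We then let $\operatorname{Osc}(G,K)$ act on $\operatorname{St}(n_{\mathbb{F}},\mathcal{D})$ by sending a basis $\mathcal{B}$ of $\mathcal{D}_p$ to $dT_{(k,z,v)}(\mathcal{B})$, a basis of $\mathcal{D}_{T(p)}$.

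\emph{Step 1: the action is well defined.} Left translations preserve $\mathcal{D}$ and, through the projections $\pi_{(z,v)}$, all the structure on it: the $\mathbb{F}$-structure, the inner product and the orientation are all pulled back from $\mathfrak{p}_{\mathbb{F}}$, and $\pi\circ dL=\pi$. The automorphism $k$ maps $\mathcal{D}$ to itself and acts on $\mathfrak{p}\equiv\mathbb{F}^m$ by $\operatorname{Ad}(\operatorname{diag}(a,A))p(z)=p(Az\bar a)$. For each field I check that $z\mapsto Az\bar a$ preserves $-B$ on $\mathfrak{p}$, sends oriented $\mathbb{F}$-lines to oriented $\mathbb{F}$-lines, and preserves the orientation (the last because $K$ is connected). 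For $\mathbb{H}$ this map is not right $\mathbb{H}$-linear, but it still sends $\mathbb{H}$-lines to $\mathbb{H}$-lines, since $zq\mapsto (Az\bar a)(aq\bar a)$. Hence real $\mathbb{F}$-Hermitian bases go to real $\mathbb{F}$-Hermitian bases.

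\emph{Step 2: transitivity.} This reduces to the fibre over $0$. Left translations by $N_{\mathbb{F}}$ act transitively on base points, so it suffices to show that $\operatorname{Ad}(K)$ acts transitively on the real $\mathbb{F}$-Hermitian bases of $\mathfrak{p}\equiv\mathbb{F}^m$. The stabilizer of a basis of $\mathcal{D}_0$ is then $\{(k,0,0): k|_{\mathfrak{p}}\text{ fixes the basis}\}$. For $\mathbb{R}$, $SO_m$ acts simply transitively on positively oriented orthonormal bases. Moreover, if $k|_{\mathfrak{p}}=\mathrm{id}$ then $k|_{\mathfrak{k}}=\mathrm{id}$ as well, because $\mathfrak{k}=[\mathfrak{p},\mathfrak{p}]$. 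So the isotropy is trivial, which gives the first assertion.

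\emph{Step 3: the complex and quaternionic cases.} Here I would write the effective linear group as the image of $K$ under $(a,A)\mapsto(z\mapsto Az\bar a)$. For $\mathbb{C}$, with $a=(\det A)^{-1}$, this is $A\mapsto(\det A)A$. Its kernel is $\{\omega I:\omega^{m+1}=1\}\cong\mathbb{Z}_{m+1}$. The map is surjective onto $U(m)$ by connectedness and a dimension count, and $U(m)$ is transitive on unitary bases. For $\mathbb{H}$, the analogous map $Sp_1\times Sp_m\to GL_{\mathbb{R}}(\mathbb{H}^m)$ has kernel $\{\pm(1,I)\}\cong\mathbb{Z}_2$. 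I then need transitivity of its image on real $\mathbb{H}$-Hermitian bases, using that $Sp_m$ is transitive on $\mathbb{H}$-orthonormal frames and that the $Sp_1$ factor rotates within lines.

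\emph{Main obstacle.} The hardest part is Step 3: matching the isotropy of the action precisely with $\mathbb{Z}_{m+1}$, respectively $\mathbb{Z}_2$. The scalars $\omega I$ lie in the centre of $SU_{m+1}$, so $\operatorname{Ad}_G$ kills them. One must therefore be careful about whether the isotropy is computed in $K$, acting through the covering $K\to\operatorname{Ad}_G(K)$, or in $\operatorname{Ad}_G(K)$ itself, and I would fix this convention first. A second difficulty for $\mathbb{H}$ is checking transitivity by an explicit dimension count, comparing the dimension of the set of real $\mathbb{H}$-Hermitian frames with $\dim(Sp_1\times Sp_m)$ and adjusting the convention for "oriented $\mathbb{H}$-line bases" if the two do not match.
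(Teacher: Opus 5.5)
Your route is the paper's: reduce to the fibre over $0$ by left translations, use $\operatorname{Ad}(\operatorname{diag}(a,A))\,p(z)=p(Az\bar a)$, then treat $SO_m$, $\psi(A)=\det(A)A$ and the double cover $S^3\times S^3\to SO_4$ case by case. The real case is complete. The complex and quaternionic assertions, however, are decided exactly by the two points you postpone as the ``main obstacle'', and neither can be settled in the direction claimed.

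\textbf{Isotropy.} Your Step~2 argument ($k|_{\mathfrak p}=\mathrm{id}$ forces $k=\mathrm{id}$ because $\mathfrak k=[\mathfrak p,\mathfrak p]$) uses nothing specific to $\mathbb R$. It shows that $\operatorname{Ad}_G(K)$ acts faithfully on $\mathfrak p$ for all three fields. Hence, for $\operatorname{Osc}(G,K)=\operatorname{Ad}_G(K)\ltimes N(\mathfrak g,\mathfrak k)$ as defined in the paper, every isotropy subgroup is trivial. Indeed, $\ker\psi=\{\omega I_m:\omega^{m+1}=1\}$ and $\{\pm(1,I_m)\}$ are precisely $K\cap Z(G)=\ker(\operatorname{Ad}_G|_K)$, which the definition has already divided out. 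The paper's proof obtains $\mathbb Z_{m+1}$ and $\mathbb Z_2$ by computing these kernels on $K$ rather than on $\operatorname{Ad}_G(K)$, so it is in effect describing the action of $K\ltimes N(\mathfrak g,\mathfrak k)$ through $\operatorname{Ad}$. As a check, for $m=1$ the complex case is $\operatorname{Osc}(SU_2,U_1)\cong\operatorname{Osc}(1)$ of Example~\ref{Example2}. It acts simply transitively on the unit circle bundle $\mathcal D^{1,1}$ by Proposition~\ref{identificacion}, not with isotropy $\mathbb Z_2$. So the ``convention'' has to be fixed in the statement itself. With $\operatorname{Ad}_G(K)$ the complex action is simply transitive, since $\psi$ is onto; $\mathbb Z_{m+1}$ appears only if $K$ itself acts.

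\textbf{Quaternionic transitivity.} The dimension count you propose fails as soon as $m\ge 2$. The real $\mathbb H$-Hermitian frames of $\mathbb H^m$ form a manifold of dimension $\dim Sp_m+3m=2m^2+4m$: an $\mathbb H$-orthonormal basis, plus an oriented orthonormal completion inside each line. By contrast, $\dim(Sp_1\times Sp_m)=2m^2+m+3$.

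Concretely, suppose $(a,A)$ fixes $e_1,e_1i,e_1j,e_1k$. Then $a=\pm1$, so $z\mapsto Az\bar a=\pm Az$ is right $\mathbb H$-linear. Such a map cannot fix $e_2$ while sending $e_2i$ to $e_2j$. Therefore, replacing the block $(e_2,e_2i,e_2j,e_2k)$ of the standard frame by the still positively oriented $(e_2,e_2j,e_2k,e_2i)$ gives a frame outside its orbit.

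The paper's justification, the double cover $(p,q,z)\mapsto pz\bar q$ on each line, tacitly uses an independent right factor on every line, whereas $a$ is shared by all of them. Your Step~3 plan (``$Sp_m$ is transitive on $\mathbb H$-frames and $Sp_1$ rotates within lines'') has the same flaw. Quaternionic transitivity therefore holds only for $m=1$, where the action of $\operatorname{Ad}_G(K)\ltimes N$ is again simply transitive. For $m\ge2$, the adjustment you anticipate would have to change the definition of $\operatorname{St}(n_{\mathbb H},\mathcal D)$, not merely a convention.
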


\begin{proof} First we study the action of the subgroup $\operatorname{Ad}\left(
K_{\mathbb{F}}\right) \times \left\{ \left( 0,0\right) \right\} $ of $\operatorname{%
Osc}\left( G_{\mathbb{F}},K_{\mathbb{F}}\right) $. Notice that $\mathcal{D}%
_{\left( 0,0\right) }=\mathfrak{p}_{\mathbb{F}}\times \left\{ 0\right\} $ is
preserved. For $\operatorname{diag}\left( a,A\right) \in K_{\mathbb{F}}$ we compute 
\begin{equation*}
\operatorname{Ad}\left( \operatorname{diag}\left( a,A\right) \right) p\left( z\right)
=p\left( Aza^{-1}\right) \text{.}
\end{equation*}%
Hence, under the identification $\mathbb{F}^{m}\equiv \mathfrak{p}_{\mathbb{F%
}}$, $z\mapsto p\left( z\right) $, the action of $K_{\mathbb{F}}$ reads $%
\operatorname{diag}\left( a,A\right) \cdot z=Aza^{-1}$. We consider different cases.

\textsl{Real case.} We have $A\in SO_{m}$ and $a=1$. Hence the restriction
of $\operatorname{Ad}_{G_{\mathbb{R}}}$ to $K_{\mathbb{R}}$ is injective. Also,
clearly $\operatorname{Ad}\left( K_{\mathbb{R}}\right) \equiv K_{\mathbb{R}}$ acts
simply transitively on the set of positively oriented orthonormal bases of $%
\mathfrak{p}_{\mathbb{R}}\equiv \mathbb{R}^{m}$.

\textsl{Complex case.} We have $A\in U_{m}$ and $a=1/\det_{\mathbb{C}}\left(
A\right) $. The action of $\operatorname{Ad}\left( K_{\mathbb{C}}\right) $ on $%
\mathfrak{p}_{\mathbb{C}}$ translates into the epimorphism of Lie groups $%
\psi :U_{m}\rightarrow U_{m}$, $\psi \left( A\right) =\det \left( A\right) A$%
, with kernel $\left\{ wI_{m}\mid w^{m+1}=1\right\} \equiv \mathbb{Z}_{m+1}$%
. Also, $U_{m}$ is clearly in bijection with the set of orthonormal bases
for the canonical Hermitian structure on $\mathbb{C}^{m}$.

\textsl{Quaternionic case.} We have that $A\in Sp_{n}$ and $a$ is a unit
quaternion. The action of $\operatorname{Ad}\left( K_{\mathbb{H}}\right) $ on $%
\mathfrak{p}_{\mathbb{H}}$ translates into $\operatorname{diag}\left( a,A\right)
\cdot z=Az\bar{a}$ for $z\in \mathbb{H}^{m}$, which is transitive on the set
of $\mathbb{H}$-lines. On each $\mathbb{H}$-line $L$, the mapping $%
S^{3}\times S^{3}\times L\rightarrow L$, $\left( p,q,z\right) \mapsto pz\bar{%
q}$ is the well-known double covering $S^{3}\times S^{3}\rightarrow SO\left(
L\right) \equiv SO_{4}$. So the action is transitive on the set of real $%
\mathbb{H}$-Hermitian bases of $\mathbb{H}^{m}$. Let us find the isotropy
subgroup at the ordered basis $\left\{
e_{1},e_{1}i,e_{1}j,e_{1}k,e_{2,}e_{2}i,\dots ,e_{m}k\right\} $. Suppose
that $Ae_{s}\ell \bar{a}=e_{s}\ell $ (or equivalently, $Ae_{s}=e_{s}\ell a%
\bar{\ell}$) for all $s=1,\cdots ,m$ and all $\ell =1,i,j,k$. This holds in
particular for $\ell =1$, hence $\ell a\bar{\ell}=a$ for all $\ell =1,i,j,k$%
. It is not difficult to show that this implies that $a=1$ or $a=-1$. This,
in turn, forces $A$ to be $I_{m}$ or $-I_{m}$, respectively. Thus, the
isotropy subgroup is isomorphic to $\mathbb{Z}_{2}$.

Now we deal with the action of the whole group $\operatorname{Osc}\left( G,K\right) $%
. For each $p\in n_{\mathbb{F}}$ we denote $\varpi _{p}=\left( \left. \pi
_{p}\right\vert _{\mathcal{D}_{p}}\right) ^{-1}:\mathfrak{p}_{\mathbb{F}%
}\rightarrow \mathcal{D}_{p}$. Notice that $\varpi _{\left( z,v\right)
}=\varpi _{\left( z,\nu  \right) }$, with the obvious identifications
coming from the invariance by vertical translations in $n_{\mathbb{F}}$. So,
by abuse of notation, we sometimes write $\varpi _{z}$ instead of $\varpi
_{\left( z,v\right) }$. An arbitrary element of $\operatorname{St}\left( G_{\mathbb{F%
}},K_{\mathbb{F}}\right) $ has the form%
\begin{equation*}
\left( \left( \zeta ,\nu  \right) ,\left( \varpi _{\zeta }\left(
W_{1}\right) ,\dots ,\varpi _{\zeta }\left( W_{Mm}\right) \right) \right) 
\text{,}
\end{equation*}%
with $\zeta \in \mathfrak{p}_{\mathbb{H}}$, $\nu  \in \mathfrak{k}_{%
\mathbb{H}}$ and $\left\{ W_{1},\dots ,W_{Mm}\right\} $ a real $\mathbb{F}$%
-Hermitian basis of $\mathfrak{p}_{\mathbb{F}}\equiv \mathbb{F}^{m}$. Now we
let $\left( k,z,v\right) \in \operatorname{Osc}\left( G_{\mathbb{F}},K_{\mathbb{F}%
}\right) $ act on it, obtaining%
\begin{equation*}
\left( \left( k,z,v\right) \cdot \left( \zeta ,\nu  \right) ,\left(
\varpi _{z+k\left( \zeta \right) }\left( k\left( W_{1}\right) \right) ,\dots
,\varpi _{z+k\left( \zeta \right) }\left( k\left( W_{Mm}\right) \right)
\right) \right) \text{,}
\end{equation*}%
where the $\mathfrak{n}_{\mathbb{F}}$-component is computed by (\ref%
{multiplicationOGK}), with $h$ the identity of $\operatorname{Ad}\left( K_{\mathbb{F}%
}\right) $ (notice that the $\mathfrak{p}_{\mathbb{F}}$-component of $\left(
k,z,v\right) \cdot \left( \zeta ,\nu  \right) $ is $z+k\left( \zeta
\right) $). Straightforward computations show that it is an action. For the
transitivity and the assertion on the isotropy subgroups, the relevant
arguments are those given at the beginning of the proof.
\end{proof} 

\subsection{Geodesics of $\operatorname{Osc}\left( G,K\right) $ and their
relationship with sub-Riemannian geodesics of $N\left( \mathfrak{g},\mathfrak{k}\right) 
$}

We obtain explicitly the geodesics of $\operatorname{Osc}\left( G,K\right) $ in
terms of the sub-Riemannian geodesics of $\left( N\left( \mathfrak{g},\mathfrak{k}%
\right) ,\mathcal{D}\right) $ with the metric $-\kappa B$ on $\mathfrak{p}%
\equiv \mathcal{D}_{\left( 0,0\right) }$ (see (\ref{graph})). The main
reason for including them in the article is that the former, that is, the one-parameter
subgroups of $\operatorname{Osc}\left( G,K\right) $, are involved in results in the
last section of the paper. Nevertheless, we believe that the computation is
interesting in its own right, for future applications.

\begin{theorem}
\label{ThmGamma}Let $z\in \mathfrak{p}$ and $u,v\in \mathfrak{k}$. The
geodesic $\gamma \left( s\right) =\exp \left( s\left( u,z,v\right) \right) $
of $\operatorname{Osc}\left( G,K\right) $ with $\gamma \left( 0\right) =e$ 
and initial velocity $\left( \operatorname{ad}_{u},z,v\right) $ is
given by%
\begin{equation}
\gamma _{\left( u,z,v\right) }\left( s\right) =\left( \operatorname{Ad}_{G}\left(
e^{su}\right) ,\sigma _{\left( z,u\right) }\left( s\right) +\left(
0,h_{u}\left( s\right) \left( v\right) \right) \right) \text{,}
\label{gammauxd}
\end{equation}%
where $\sigma _{\left( z,u\right) }$ is the sub-Riemannian geodesic $\sigma $
in $\left( N\left( \mathfrak{g},\mathfrak{k}\right) ,\mathcal{D}\right) $ with $\sigma
\left( 0\right) =0$ and initial covector $\left( z+u\right) ^{\flat }$
described in Theorem \emph{\ref{geoSubN}}.
\end{theorem}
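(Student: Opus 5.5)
We show that the curve $\gamma:=\gamma_{(u,z,v)}$ defined by (\ref{gammauxd}) is a one-parameter subgroup with initial velocity $(\operatorname{ad}_u,z,v)$. Since the metric (\ref{bi}) is bi-invariant, its geodesics through $e$ are exactly the curves $s\mapsto\exp(sX)$. Any curve with $\gamma(0)=e$ satisfying $\gamma'(s)=dL_{\gamma(s)}(X)$ for all $s$ equals $\exp(sX)$, by uniqueness of integral curves of the left-invariant field $X$. So it suffices to check $\gamma(0)=e$ and $\gamma'(s)=dL_{\gamma(s)}(\operatorname{ad}_u,z,v)$ for all $s$. The first is immediate, since $h_u(0)=0$, $\sigma_{(z,u)}(0)=0$ and $\operatorname{Ad}(e^{0})=I$.

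Write $k(s)=\operatorname{Ad}(e^{su})$, $Z(s)=h_u(s)z$ and $\sigma_{(z,u)}(s)=(Z(s),Q(s))$, where $Q(s)=\tfrac12\int_0^s[Z,Z']\,dt$. Then
\[
\gamma(s)=\bigl(k(s),Z(s),Q(s)+h_u(s)v\bigr).
\]
To compute $dL_{\gamma(s)}(\operatorname{ad}_u,z,v)$, I will differentiate at $t=0$ the product $\gamma(s)\cdot(\operatorname{Ad}(e^{tu}),tz,tv)$ using (\ref{multiplicationOGK}). The curve $t\mapsto(\operatorname{Ad}(e^{tu}),tz,tv)$ has velocity $(\operatorname{ad}_u,z,v)$ at $t=0$, which is consistent with the identification of the Lie algebra made in the preceding proposition via the matrix representation $F$. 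The product is
\[
\Bigl(k(s)\operatorname{Ad}(e^{tu}),\;Z(s)+t\,k(s)z,\;Q(s)+h_u(s)v+t\,k(s)v+\tfrac{t}{2}[Z(s),k(s)z]\Bigr),
\]
whose $t$-derivative at $0$ is
\[
\Bigl(k(s)\operatorname{ad}_u,\;k(s)z,\;k(s)v+\tfrac12[Z(s),k(s)z]\Bigr).
\]

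Next I compare this componentwise with $\gamma'(s)$. First, $k'(s)=k(s)\operatorname{ad}_u$. Second, $Z'(s)=h_u'(s)z=k(s)z$ by the definition of $h_u$. Third, the $\mathfrak{k}$-component of $\gamma'(s)$ is
\[
Q'(s)+h_u'(s)v=\tfrac12[Z(s),k(s)z]+k(s)v,
\]
which matches. This is the same computation as in the proof of Theorem \ref{geoSubN}, with the additional linear term $h_u(s)v$. Its derivative $k(s)v$ accounts exactly for the $k(s)\nu$ contribution in the multiplication law, since $k(s)$ preserves $\mathfrak{k}$.

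The main point needing care is that the tangent vector to $\operatorname{Ad}_G(K)$ is correctly represented, namely that $\frac{d}{dt}\big|_0\, k(s)\operatorname{Ad}(e^{tu})=k(s)\operatorname{ad}_u$ is the left translate of $\operatorname{ad}_u$. It is also consistent to check, through the embedding $F$, that $(\operatorname{ad}_u,z,v)$ corresponds to the derivative matrix computed earlier. I will note that both hold because $F$ is an injective homomorphism into $GL(\mathbb{R}\times\mathfrak{g})$, where left translation is matrix multiplication. With the three components matching, uniqueness of integral curves of the left-invariant field gives $\gamma(s)=\exp(s(\operatorname{ad}_u,z,v))$, as claimed.
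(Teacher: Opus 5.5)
Your proposal is correct and is essentially the paper's proof: it checks $\gamma(0)=e$, then verifies $\gamma'(s)=dL_{\gamma(s)}(\operatorname{ad}_u,z,v)$ by differentiating $\gamma(s)\cdot(\operatorname{Ad}(e^{tu}),tz,tv)$ at $t=0$ with the multiplication law and matching the three components exactly as you do. Your $\mathfrak{k}$-component $Q(s)=\tfrac12\int_0^s[Z,Z']\,dt$ is in fact slightly cleaner than the paper's displayed $\mathcal{V}(s,t)$, whose first term $\int_0^s C$ should carry the factor $\tfrac12$; this slip does not affect the $t$-derivative.
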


\begin{corollary}
\label{Coro}The sub-Riemannian geodesics of $\left( N\left( \mathfrak{g},\mathfrak{k}%
\right) ,\mathcal{D}\right) $ can be expressed in terms of the exponential
map of the group $\operatorname{Osc}\left( G,K\right) $, containing $N\left( 
\mathfrak{g},\mathfrak{k}\right) $, as $\sigma _{\left( z,u\right) }=\mathcal{P}\circ
\gamma _{\left( u,z,0\right) }$, where $\mathcal{P}:\operatorname{Osc}\left(
G,K\right) \rightarrow N\left( \mathfrak{g},\mathfrak{k}\right) $ denotes the canonical
projection.
\end{corollary}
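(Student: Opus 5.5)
The statement to prove is Corollary \ref{Coro}. It follows at once from Theorem \ref{ThmGamma}. Take $v=0$ in (\ref{gammauxd}). Since $h_{u}\left( s\right)$ is linear, $h_{u}\left( s\right) \left( 0\right) =0$ for all $s$, so
\begin{equation*}
\gamma _{\left( u,z,0\right) }\left( s\right) =\left( \operatorname{Ad}_{G}\left( e^{su}\right) ,\sigma _{\left( z,u\right) }\left( s\right) \right) .
\end{equation*}
The canonical projection $\mathcal{P}\left( k,z,v\right) =\left( z,v\right)$ forgets the $\operatorname{Ad}_{G}\left( K\right)$ component. Hence $\mathcal{P}\circ \gamma _{\left( u,z,0\right) }=\sigma _{\left( z,u\right) }$.

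Theorem \ref{geoSubN} shows that every sub-Riemannian geodesic of $\left( N\left( \mathfrak{g},\mathfrak{k}\right) ,\mathcal{D}\right)$ through $0$ has the form $\sigma _{\left( z,u\right) }$ for some $z\in \mathfrak{p}$, $u\in \mathfrak{k}$. This holds because every initial covector in $\mathfrak{n}^{\ast }$ is $\left( z+u\right) ^{\flat }$, $B$ being nondegenerate on $\mathfrak{g}$. Geodesics starting elsewhere are left translates of these, since the structure is left-invariant. So the expression covers all geodesics, up to left translation in $N\left( \mathfrak{g},\mathfrak{k}\right)$. One can also note that $\mathcal{P}$ intertwines left multiplication by elements $\left( I,z,v\right)$, by (\ref{multiplicationOGK}).

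The only substantive input is Theorem \ref{ThmGamma}, which we assume. If it had to be verified, I would check that the right-hand side of (\ref{gammauxd}) satisfies $\gamma ^{-1}\gamma ^{\prime }=\left( \operatorname{ad}_{u},z,v\right)$ using (\ref{multiplicationOGK}). The key identities are $h_{u}^{\prime }\left( s\right) =\operatorname{Ad}\left( e^{su}\right)$ and $\operatorname{Ad}\left( e^{-su}\right) \left[ h_{u}\left( s\right) z,h_{u}^{\prime }\left( s\right) z\right] =\left[ \operatorname{Ad}\left( e^{-su}\right) h_{u}\left( s\right) z,z\right]$. The main obstacle there would be this bookkeeping of the $\mathfrak{k}$-component.
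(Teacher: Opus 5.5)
Your proof is correct and matches how the paper obtains the corollary: it is the case $v=0$ of (\ref{gammauxd}) in Theorem \ref{ThmGamma}, where $h_u(s)(0)=0$ and $\mathcal{P}$ simply discards the $\operatorname{Ad}_G(K)$-component. Your extra remarks are also correct but go beyond what the statement requires: that every covector is some $(z+u)^\flat$ because $B$ is nondegenerate, that left-invariance handles other base points, and that $\mathcal{P}$ intertwines left translations.
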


\smallskip

\noindent \textit{Proof of Theorem \ref{ThmGamma}. } 
Given $W$ in the Lie algebra of a Lie group, $s\mapsto \exp
\left( sW\right) $ is the unique smooth curve $E:\mathbb{R}$ $\rightarrow G$
with $E\left( 0\right) =e$ and $E^{\prime }\left( s\right) =dL_{E\left(
s\right) }\left( W\right) $. For simplicity we set $\gamma =\gamma _{\left(
u,x,v\right) }$. Clearly, $\gamma \left( 0\right) $ is the identity of $%
\operatorname{Osc}\left( G,K\right) $. It remains to verify the other condition.
Using (\ref{gammauxd}) and (\ref{multiplicationOGK}) we compute%
\begin{eqnarray*}
dL_{\gamma \left( s\right) }\left( \operatorname{ad}_{u},z,v\right) &=&\left. \tfrac{%
d}{dt}\right\vert _{0}\gamma \left( s\right) \left( e^{t\operatorname{ad}%
_{u}},tz,tv\right) \\
&=&\left. \tfrac{d}{dt}\right\vert _{0}\left( \operatorname{Ad}\left( e^{su}\right)
e^{t\operatorname{ad}_{u}},h_{u}\left( s\right) z+\operatorname{Ad}\left( e^{su}\right) tz,%
\mathcal{V}\left( s,t\right) \right) \text{,}
\end{eqnarray*}%
where $\mathcal{V}\left( s,t\right) =\int_{0}^{s}C\left( \tau \right) ~d\tau
+h_{u}\left( s\right) v+\operatorname{Ad}\left( e^{su}\right) \left( tv\right) +%
\frac{1}{2}tC\left( s\right) $ (the function $C$ was defined in the last paragraph of the 
proof of Theorem \ref{geoSubN}). Since 
\begin{equation*}
\left. \tfrac{d}{dt}\right\vert _{0}\mathcal{V}\left( s,t\right) =\operatorname{Ad}%
\left( e^{su}\right) \left( v\right) +\tfrac{1}{2}C\left( s\right) \text{,}
\end{equation*}%
we have that $dL_{\gamma \left( s\right) }\left( \operatorname{ad}_{u},z,v\right)
=\gamma ^{\prime }\left( s\right) $, as desired.
\hfill $\square$ 

\bigskip

For the sake of simplicity, by abuse of notation, we sometimes omit the
symbol $\operatorname{ad}$ in the $\operatorname{ad}\left( \mathfrak{k}\right) $-component
of $\operatorname{osc}\left( \mathfrak{g},\mathfrak{k}\right) $.

\begin{example} \upshape
\label{ExamplegammaGamma} We deduce from Theorem \ref{ThmGamma} the expression for the geodesics in $%
\operatorname{Osc}\left( \kappa \right) $ given in Proposition \ref{PropGeoHeiO},
via Example \ref{Example1}, whose notation we maintain. This could help to
conceptualize the corresponding formulas (\ref{sigmaPropo1}) and (\ref%
{gammaPropo1}) in coordinates of $\mathbb{C}\times \mathbb{R}$ and $\mathbb{R%
}\times \mathbb{C}\times \mathbb{R}$, respectively.
\end{example}

\begin{proposition}
\label{gammaGamma}The mapping $F_{\kappa }:\operatorname{Osc}\left( SO\left(
g_{\kappa }\right) ,SO\left( 2\right) \right) \rightarrow \operatorname{Osc}\left(
\kappa \right) $ in\emph{\ (\ref{F1k})} sends the geodesic through the
identity with initial velocity $\left( J\left( u\right) ,m_{\kappa }\left(
z\right) ,J\left( v\right) \right) $ to the geo\-desic with initial velocity $%
\phi _{\kappa }\left( u,z,v\right) $ obtained in \emph{(\ref{gammaPropo1})}.
\end{proposition}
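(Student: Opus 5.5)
Since $F_\kappa$ is a Lie group isomorphism (Proposition \ref{13}), it maps one-parameter subgroups to one-parameter subgroups: $F_\kappa(\exp(sX))=\exp(s\,(dF_\kappa)_e X)$. From the proof of Proposition \ref{13}, $(dF_\kappa)_e(J(u),m_\kappa(z),J(v))=\phi_\kappa(u,z,v)$. Because the bi-invariant metrics make geodesics through the identity exactly the one-parameter subgroups, the image of $\gamma_{(u,z,v)}$ is the geodesic of $\operatorname{Osc}(\kappa)$ with initial velocity $\phi_\kappa(u,z,v)$. This already proves the statement abstractly. The plan, however, is to use Theorem \ref{ThmGamma} to compute $F_\kappa\circ\gamma_{(u,z,v)}$ explicitly and check that it equals (\ref{gammaPropo1}). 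This also proves Proposition \ref{PropGeoHeiO}, as Example \ref{ExamplegammaGamma} announces.

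\textbf{Step 1: rotation and translation parts.} By (\ref{gammauxd}), the $\operatorname{Ad}(K)$-component of $\gamma$ is $\operatorname{Ad}(e^{sJ(u)})=\operatorname{Ad}(R(us))$, which $F_\kappa$ sends to the first coordinate $us$. The $\mathfrak{k}$-correction $h_{J(u)}(s)(J(v))$ equals $sJ(v)$, because $\operatorname{Ad}(R(ut))$ fixes $J(v)$ by (\ref{f2}). This gives the summand $(0,vs)$ in (\ref{gammaPropo1}).

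\textbf{Step 2: horizontal part of $\sigma_{(m_\kappa(z),J(u))}$.} By (\ref{f1}), $\operatorname{Ad}(e^{tJ(u)})m_\kappa(z)=m_\kappa(r(ut)z)=m_\kappa(e^{iut}z)$ under $\mathbb{R}^2\equiv\mathbb{C}$. Hence
\[h_{J(u)}(s)m_\kappa(z)=m_\kappa\Bigl(\int_0^s e^{iut}z\,dt\Bigr)=m_\kappa\bigl(\tfrac1u(1-e^{ius})iz\bigr)\]
for $u\neq0$, and $m_\kappa(sz)$ for $u=0$. This matches the $\mathbb{C}$-component of (\ref{sigmaPropo1}).

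\textbf{Step 3: vertical part, the only real computation.} By (\ref{f3}), the $\mathfrak{k}$-component of $\sigma$ is $J$ applied to
\[\tfrac{\kappa}{2}\int_0^s\det\bigl(\tfrac1u(1-e^{iut})iz,\ e^{iut}z\bigr)\,dt.\]
I will use $\det(a,b)=\operatorname{Im}(\bar a b)$. The integrand becomes $\tfrac{|z|^2}{u}\operatorname{Im}\bigl(-i(1-e^{-iut})e^{iut}\bigr)=\tfrac{|z|^2}{u}(1-\cos ut)$. Integrating gives
\[\tfrac{\kappa}{2u^2}|z|^2\,(us-\sin us),\]
as in (\ref{sigmaPropo1}); for $u=0$ the term vanishes.

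The main obstacle is bookkeeping the sign and orientation conventions. The identification of $r(u)$ with multiplication by $e^{iu}$, the sign in $\det$, and the factor $\kappa$ in (\ref{f3}) must all be consistent, and I will double-check them against the multiplication (\ref{multipliPhi}).
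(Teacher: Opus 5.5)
Your proposal is correct and is essentially the paper's proof: you apply Theorem~\ref{ThmGamma} and compute the three components of $F_\kappa\circ\gamma$ via (\ref{f1}), (\ref{f2}) and (\ref{f3}), and your use of $\det(a,b)=\operatorname{Im}(\bar a b)$ correctly produces the integrand $\tfrac{|z|^2}{u}(1-\cos ut)$ and hence the vertical term $\tfrac{\kappa}{2u^2}|z|^2(us-\sin us)$. Your preliminary abstract argument via one-parameter subgroups and $(dF_\kappa)_e$ is also valid, but it matches the image with (\ref{gammaPropo1}) only if Proposition~\ref{PropGeoHeiO} is taken as known. The explicit computation therefore does the real work, just as in the paper, where it serves as a verification of that proposition.
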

\begin{proof} First, we compute $\sigma _{\left( m_{\kappa }\left(
z\right) ,J\left( u\right) \right) }$. By (\ref{f1}),%
\begin{eqnarray*}
h_{J\left( u\right) }\left( s\right) \left( m_{\kappa }\left( z\right)
\right) &=&\int_{0}^{s}\operatorname{Ad}\left( e^{tJ\left( u\right) }\right)
m_{\kappa }\left( z\right) ~dt\equiv \int_{0}^{s}m_{\kappa }\left(
e^{itu}z\right) ~dt \\
&=&m_{\kappa }\left( \int_{0}^{s}e^{itu}z~dt\right) =m_{\kappa }\left( 
\tfrac{1}{u}iz\left( 1-e^{isu}\right) \right)
\end{eqnarray*}%
if $u\neq 0$, and it equals $m_{\kappa }\left( sz\right) $ if $u=0$.

Next we compute the integrand in the second coordinate of (\ref{sigmaXU}) when $u\neq 0$: 
\begin{equation*}
\left[ h_{J\left( u\right) }\left( t\right) m_{\kappa }\left( z\right)
,h_{J\left( u\right) }^{\prime }\left( t\right) m_{\kappa }\left( z\right) %
\right] =\left[ m_{\kappa }\left( i\tfrac{z}{u}\left( 1-e^{itu}\right)
\right) ,m_{\kappa }\left( e^{itu}z\right) \right] \text{.}
\end{equation*}%
Using (\ref{f3}), we have that it equals%
\begin{eqnarray*}
J\left( \kappa \det \left( \tfrac{1}{u}\left( 1-e^{itu}\right)
iz,e^{itu}z\right) \right) &=&J\left( \tfrac{\kappa }{u}\left( \det \left(
iz,e^{itu}z\right) +\det \left( -e^{itu}iz,e^{itu}z\right) \right) \right) \\
&=&J\left( \tfrac{\kappa }{u}\left( \det \left( z,iz\right) -\det \left(
zi,e^{itu}z\right) \right) \right) \\
&=&J\left( \tfrac{\kappa }{u}\left( \left\vert z\right\vert ^{2}-\cos \left(
tu\right) \left\vert z\right\vert ^{2}\right) \right) \text{.}
\end{eqnarray*}%
Its integral between $0$ and $s$ is $J\left( \tfrac{\kappa }{u^{2}}%
\left\vert z\right\vert ^{2}\left( us-\sin \left( us\right) \right) \right) $%
. The case $u=0$ is similar.

Finally, $h_{J\left( u\right) }\left( s\right) \left( J\left( v\right)
\right) =\int_{0}^{s}\operatorname{Ad}\left( e^{tJ\left( u\right) }\right) J\left(
v\right) ~dt=J\left( vs\right) ,$ by (\ref{f2}). Consequently, $F_{\kappa }$
takes $\gamma _{\left( J\left( u\right) ,m_{\kappa }\left( z\right) ,J\left(
v\right) \right) }$ to the geodesic (\ref{gammaPropo1}).
\end{proof} 

\begin{example} \upshape
Actually, Theorem \ref{geoSubN} and Theorem \ref{ThmGamma} can be used to
obtain explicit expressions for sub-Riemannian geodesics of $N\left( 
\mathfrak{g},\mathfrak{k}\right) $ or geodesics in $\operatorname{Osc}\left(
G,K\right) $ not involving integrals or power series. We sketch the case $%
G=SU\left( 1+m\right) $, $K=U\left( m\right) $, presenting partial
computations that lead to the formulas, but omit to write them down 
for the sake of space, although they are elementary.

For $A\in \mathfrak{u}\left( m\right) $ we call $M\left( A\right) =\operatorname{diag%
}\left( -\operatorname{tr}\left( A\right) ,A\right) \in \mathfrak{k}$. We may
suppose without loss of generality that $u=M\left( Di\right) $, where $D\in 
\mathbb{R}^{m\times m}$ is a diagonal matrix (notice that any element of $%
\mathfrak{k}$ is conjugate by $K$ to $u$ of this form). For $z\in \mathbb{C}%
^{m}$, let $p\left( z\right) $ be as in (\ref{pz}). We compute $\operatorname{ad}%
_{u}\left( p\left( z\right) \right) =p\left( Eiz\right) $, where $E=D+%
\operatorname{tr}\left( D\right) I_{m}$ (here $I_{m}$ is the $\left( m\times
m\right) $-identity matrix). Hence,%
\begin{equation*}
\operatorname{Ad}\left( e^{tu}\right) p\left( z\right) =e^{t\operatorname{ad}_{u}}p\left(
z\right) =\sum\nolimits_{j=0}^{\infty }\tfrac{t^{j}}{j!}\left( \operatorname{ad}%
_{u}\right) ^{j}p\left( z\right) =p\left( e^{tEi}z\right) \text{.}
\end{equation*}%
Calling $e_{k}=E_{k,k}$, we have then that $h_{u}\left( s\right) p\left(
z\right) =p\left( y_{u}\left( s\right) \right) $, where
\begin{equation*}
 y_{u}\left(
s\right) _{k}=\int_{0}^{s}e^{te_{k}i}z_{k}~dt=\tfrac{1}{e_{k}}\left(
1-e^{se_{k}i}\right) z_{k}i
\end{equation*}
 if $e_{k}\neq 0$ and $y_{u}\left( s\right)
_{k}=sz_{k}$ if $e_{k}=0$. Other helpful auxiliary computations are $\left[
p\left( z\right) ,p\left( w\right) \right] =M\left(w\bar{z}^{T}-z\bar{w}^{T}\right) $ 
and $\operatorname{ad}_{u}\left( M\left( A\right) \right) =p\left( %
\left[ D,A\right] i\right) =\operatorname{diag}\left( 0,\left[ D,A\right] i\right) $.
\end{example}

\subsection{Sub-semi-Riemannian structures on $\operatorname{Osc}\left( G,K\right) $%
\label{Sub-semi}}

The following theorem generalizes Theorem \ref{subOsc} (a) and (d) from $\operatorname{Osc}%
\left( \kappa \right) $ to $\operatorname{Osc}\left( G,K\right) $ for symmetric
pairs $\left( G,K\right) $ with $G/K$ with arbitrary rank.

\begin{theorem}
\label{sub-semi}Consider on $\operatorname{Osc}\left( G,K\right) $ the canonical
bi-invariant pseudo-Riemannian metric $g$ and let $\mathcal{E}$ be the
left-invariant distribution on $\operatorname{Osc}\left( G,K\right) $ with $\mathcal{%
E}\left( e\right) =\mathfrak{h}^{\perp }$, where $\mathfrak{h}$ is either:

\smallskip

\emph{a)} the subspace%
\begin{equation}
\left\{ \left( \operatorname{ad}_{u},0,v\right) \mid u,v\in \mathfrak{t}\right\}
\oplus \left\{ \left( -\lambda \operatorname{ad}_{w},0,w\right) \mid w\in \mathfrak{z%
}\right\} \text{,}  \label{sumaDirecta}
\end{equation}%
with $\mathfrak{t}$ a maximal abelian subalgebra of $\left[ \mathfrak{k},%
\mathfrak{k}\right] $, $\mathfrak{z}$ the center of $\mathfrak{k}$ and $%
\lambda \neq 0$ a real number, or

\smallskip

\emph{b)} the subspace $\left\{ \left( 0,z,0\right) \in \operatorname{osc}\left( \mathfrak{g%
},\mathfrak{k}\right) \mid z\in \mathfrak{a}\right\} $, with $\mathfrak{a}$
a maximal abelian subalgebra of $\mathfrak{g}$ contained in $\mathfrak{p}$.

Let $g_{o}$ be the metric induced on $\mathcal{E}$ from $g$. Then $\left( 
\mathcal{E},g_{o}\right) $ is a sub-semi-Riemannian structure on $\operatorname{Osc%
}\left( G,K\right) $ and the normal geodesics through the identity are given
by $\gamma \left( s\right) =\exp \left( s\left( A+B\right) \right) \exp
\left( -sB\right) $, with $A\in \mathfrak{h}^{\perp }$ and $B\in \mathfrak{h}
$.
\end{theorem}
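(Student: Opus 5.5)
The plan is to apply Proposition \ref{PropRM} (the result used in the proof of Theorem \ref{subOsc}(d)). For a Lie group with a bi-invariant pseudo-Riemannian metric $g$ and a subalgebra $\mathfrak{h}$ of the Lie algebra, suppose $g|_{\mathfrak{h}}$ is nondegenerate and $\mathfrak{h}^{\perp}$ is bracket generating. Then the left-invariant distribution with fiber $\mathfrak{h}^\perp$ and the induced metric has normal geodesics through $e$ of the form $\exp(s(A+B))\exp(-sB)$ with $A\in\mathfrak{h}^\perp$, $B\in\mathfrak{h}$. So for each of the two choices of $\mathfrak{h}$ we verify three things: (i) $\mathfrak{h}$ is a subalgebra of $\operatorname{osc}(\mathfrak{g},\mathfrak{k})$; (ii) $g|_{\mathfrak{h}\times\mathfrak{h}}$ is nondegenerate, hence so is $g|_{\mathfrak{h}^\perp}$ and $\mathfrak{h}\oplus\mathfrak{h}^\perp$ is the whole algebra; (iii) $\mathfrak{h}^\perp$ is bracket generating.

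\emph{Case (a).} For (i), use (\ref{bracketHD}). Elements of the form $(\operatorname{ad}_u,0,v)$ with $u,v\in\mathfrak{t}+\mathfrak{z}$ bracket to $(\operatorname{ad}_{[u,\mu]},0,[u,\nu]-[\mu,v])$. This vanishes because $\mathfrak{t}+\mathfrak{z}$ is abelian, so $\mathfrak{h}$ is even abelian. For (ii), by (\ref{bi}) the form on $\mathfrak{h}$ is $-\kappa\big(B(u,\nu)+B(v,\mu)\big)$ on the $\mathfrak{t}$-part, which is a split (hence nondegenerate) form because $B|_{\mathfrak{t}}$ is definite. On the $\mathfrak{z}$-part it is $2\kappa\lambda B(w,w')$, nondegenerate since $\lambda\neq0$ and $B|_{\mathfrak{k}}$ is definite. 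The two parts are orthogonal because $\mathfrak{t}\subset[\mathfrak{k},\mathfrak{k}]$ is $B$-orthogonal to $\mathfrak{z}$.

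Next we compute $\mathfrak{h}^\perp$. It contains all of $\{(0,z,0)\mid z\in\mathfrak{p}\}$, because $B(\mathfrak{p},\mathfrak{k})=0$. For (iii) we then bracket: $[(0,z,0),(0,\zeta,0)]=(0,0,[z,\zeta])$, and $[\mathfrak{p},\mathfrak{p}]=\mathfrak{k}$, so we obtain all of $\{0\}\times\{0\}\times\mathfrak{k}$. Together with $\mathfrak{h}^\perp$ itself, which is the complement of $\mathfrak{h}$, this yields a space whose projection onto the $\operatorname{ad}(\mathfrak{k})$-factor must be checked. Since $\mathfrak{h}^\perp$ has dimension $2\dim\mathfrak{k}+\dim\mathfrak{p}-2\dim\mathfrak{t}-\dim\mathfrak{z}$, we still need all $\operatorname{ad}_u$ directions. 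The vectors $(\operatorname{ad}_u,0,*)\in\mathfrak{h}^\perp$ are those with $u$ orthogonal to $\mathfrak{t}$ (modulo the $\mathfrak{z}$-twist), i.e.\ they include the root directions of $\mathfrak{k}$ relative to $\mathfrak{t}$ together with suitable $\mathfrak{z}$ components. Brackets of such elements give $\operatorname{ad}_{[u,\mu]}$, and brackets of root vectors of $[\mathfrak{k},\mathfrak{k}]$ fill out $\mathfrak{t}$, because $[\mathfrak{k},\mathfrak{k}]$ is compact semisimple. For $\mathfrak{z}$, having all of $\mathfrak{k}$ in the last slot, the constraint $(-\lambda\operatorname{ad}_w,0,w)\perp$ shows that $\mathfrak{h}^\perp\cap(\operatorname{ad}(\mathfrak{z})\times0\times\mathfrak{z})$ is a graph transverse to $\{0\}\times0\times\mathfrak{z}$. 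Adding the full last slot therefore gives $\operatorname{ad}(\mathfrak{z})$. This bracket-generating step is where I expect the main bookkeeping effort, specifically the $\mathfrak{t}$-directions.

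\emph{Case (b).} $\mathfrak{h}=\{(0,z,0)\mid z\in\mathfrak{a}\}$ is a subalgebra because $[\mathfrak{a},\mathfrak{a}]=0$ and (\ref{bracketHD}) then gives zero. It is nondegenerate since $B|_{\mathfrak{p}}$ is definite. Its orthogonal complement is $\operatorname{ad}(\mathfrak{k})\times\mathfrak{a}^{\perp_{\mathfrak{p}}}\times\mathfrak{k}$. Bracketing $(\operatorname{ad}_u,0,0)$ with $(0,\zeta,0)$ for $\zeta\in\mathfrak{a}^{\perp}$ gives $(0,[u,\zeta],0)$. Since $\operatorname{Ad}(K)$-orbits meet $\mathfrak{a}$ orthogonally and $[\mathfrak{k},\mathfrak{a}^\perp]$ has a nonzero component in $\mathfrak{a}$ (use restricted root vectors: for $X_\alpha+\theta X_\alpha\in\mathfrak{k}$ and $X_\alpha-\theta X_\alpha\in\mathfrak{p}$, their bracket is a nonzero multiple of $H_\alpha\in\mathfrak{a}$), and the $H_\alpha$ span $\mathfrak{a}$, the missing $\mathfrak{a}$ directions are obtained at step two. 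With both cases verified, Proposition \ref{PropRM} gives the stated geodesic formula.
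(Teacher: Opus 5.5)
Your proposal is correct and follows essentially the same route as the paper. You apply Proposition~\ref{PropRM} and verify that $\mathfrak{h}$ is a subalgebra, that the metric is nondegenerate, and that $\mathfrak{h}^{\perp}$ is bracket generating. For the last point you use $[\mathfrak{p},\mathfrak{p}]=\mathfrak{k}$, brackets of root vectors of $[\mathfrak{k},\mathfrak{k}]$ in $\mathfrak{m}=\mathfrak{t}^{\perp}\cap[\mathfrak{k},\mathfrak{k}]$ to fill out $\mathfrak{t}$, the twisted elements $(\lambda \operatorname{ad}_w,0,w)$ with $\lambda\neq 0$ for $\mathfrak{z}$, and restricted root vectors for $\mathfrak{a}$ in case (b).

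The differences are only organizational: in (a) you check nondegeneracy on $\mathfrak{h}$ rather than on $\mathfrak{h}^{\perp}$, and you produce the whole last slot $\mathfrak{k}$ before the $\operatorname{ad}(\mathfrak{k})$-slot. The one detail you leave implicit is that $X_\alpha-\vartheta X_\alpha$ is $B$-orthogonal to $\mathfrak{a}$; this follows from $B(\mathfrak{g}_{\pm\alpha},\mathfrak{g}_0)=0$, as the paper checks.
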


\begin{example} \label{lastexample} \upshape
We comment on the distributions in the theorem above for the oscillator
groups associated with compact classical two-point homogeneous spaces, as in
Section \ref{Stiefel} (in particular, their rank, equal to the dimension of $%
\mathfrak{a}$, is one). We use the same notation as in that section. 
We call $\mathfrak{k}^{\prime }=\left[ \mathfrak{k,k}\right] $, which is
a semisimple Lie algebra.

\smallskip

- We begin with the real case. If $m=\dim \mathfrak{p}=2$, then $\mathfrak{k}%
=\mathfrak{z}$ and $\mathfrak{k}^{\prime }=\mathfrak{t}=\left\{ 0\right\} $.
Then, under the isomorphism $F_{1}:\operatorname{Osc}\left( SO_{3},SO_{2}\right)
\rightarrow \operatorname{Osc}\left( 1\right) $ of (\ref{F1k}), the sub-Lorentzian
or sub-Riemannian structure $\mathcal{E}^{\lambda }$ on $\operatorname{Osc}\left(
1\right) $ in Theorem \ref{subOsc} corresponds to the one in (a) of the
theorem above. Besides, taking $\mathfrak{a}=\left\{ m\left( ae_{2}\right)
\mid a\in \mathbb{R}\right\} $, the sub-Lorentzian structure $\mathcal{E}%
^{o} $ in $\operatorname{Osc}\left( 1\right) $ corresponds to the one in (b) of the
preceding theorem.

Also in the real case, but with $m=\dim \mathfrak{p}>2$, we have $\mathfrak{k%
}^{\prime }=\mathfrak{k}$ and $\mathfrak{z}=\left\{ 0\right\} $. One can
take, for instance, $\mathfrak{t}=\left\{ \operatorname{diag}\left( 0,t_{1}j,\dots
,t_{k}j\right) \mid t_{1},\dots ,t_{k}\in \mathbb{R}\right\} $ if $m=2k$, $%
\mathfrak{t}$ $=\left\{ \operatorname{diag}\left( 0,t_{1}j,\dots ,t_{k}j,0\right)
\mid t_{1},\dots ,t_{k}\in \mathbb{R}\right\} $ if $m=2k+1$, and $\mathfrak{a%
}=\left\{ m\left( ae_{m}\right) \mid a\in \mathbb{R}\right\} $.

\smallskip

- In the complex case $\mathfrak{k}$ is not semisimple. We have $\mathfrak{k}%
^{\prime }=\left\{ \operatorname{diag}\left( 0,A\right) \mid A\in \mathfrak{su}\left( m\right)
\right\} \equiv \mathfrak{su}\left( m\right) $ and $\mathfrak{z}=\left\{ \operatorname{diag}%
\left( -mti,tI_{m}i\right) \mid t\in \mathbb{R}\right\} $. One can take 
\begin{equation*}
\mathfrak{t}=\left\{ \operatorname{diag}\left( 0,t_{1}i,\dots ,t_{m}i\right) \mid
t_{1},\dots ,t_{m}\in \mathbb{R}\text{, }t_{1}+\cdots +t_{m}=0\right\}
\equiv \mathbb{R}^{m-1}
\end{equation*}%
and $\mathfrak{a}=\left\{ p\left( ae_{m}\right) \mid a\in \mathbb{R}\right\}
\equiv \mathbb{R}$.

\smallskip

- In the quaternionic case $\mathfrak{k}$ is semisimple (although not
simple), hence $\mathfrak{k}^{\prime }=\mathfrak{k}$ and $\mathfrak{z}%
=\left\{ 0\right\} $. One can take $\mathfrak{t}=\left\{ \operatorname{diag}\left(
t_{0}i,\dots ,t_{m}i\right) \mid t_{l}\in \mathbb{R}\right\} \equiv \mathbb{R%
}^{1+m}$ and $\mathfrak{a}=\left\{ p\left( ae_{m}\right) \mid a\in \mathbb{R}%
\right\} \equiv \mathbb{R}$.
\end{example}

Before the proof of Theorem \ref{sub-semi}, 
we recall some properties of semisimple symmetric pairs $%
\left( G,K\right) $, introduced at the beginning of this section:%
\begin{equation*}
\left[ \mathfrak{p},\mathfrak{p}\right] =\mathfrak{k}\text{,\ \ }B\left( 
\mathfrak{p},\mathfrak{k}\right) =0,\text{\ }\left. B\right\vert _{\mathfrak{%
k}\times \mathfrak{k}}<0\text{ \ \ and }\left. -\kappa B\right\vert _{%
\mathfrak{p}\times \mathfrak{p}}>0\text{.}
\end{equation*}%
Now we list some well-known facts about the Lie subalgebras of $\mathfrak{g}$
involved in the statement of Theorem \ref{sub-semi}.

\smallskip

I)  We have that $
\mathfrak{k=k}^{\prime }\oplus \mathfrak{z}$  and $B\left( \mathfrak{k}%
^{\prime },\mathfrak{z}\right) =0$. In particular, (\ref{sumaDirecta}) is in fact a direct sum.

\smallskip

II) We define $\mathfrak{m}$ by $\mathfrak{k}^{\prime }
=\mathfrak{m }\oplus\mathfrak{t}$ with $B\left( \mathfrak{m,}\mathfrak{t}\right) =0$. 
Then there exist $r_{1},\dots ,r_{l}$, elements of $\mathfrak{t}$ spanning it, and a
basis $\left\{ x_{1},\dots ,x_{l},y_{1},\dots ,y_{l}\right\} $ of $\mathfrak{%
m}$ such that $\left[ x_{s},y_{s}\right] =r_{s}$ for all $s=1,\dots ,l$.

\smallskip

III) For each $\alpha \in \mathfrak{a}^{\ast }$ let 
\begin{equation*}
\mathfrak{g}_{\alpha }=\left\{ X\in \mathfrak{g\mid }\operatorname{ad}_{A}\left(
X\right) =\alpha \left( A\right) X\text{ for all }A\in \mathfrak{a}\right\} 
\text{.}
\end{equation*}%
An element $\alpha \in \mathfrak{a}^{\ast }$ is called a restricted root if $%
\mathfrak{g}_{\alpha }\neq \left\{ 0\right\} $. Let $\Delta $ be the set of
nonzero restricted roots, which is a finite set. Then $\mathfrak{g}=\mathfrak{g}%
_{0}+\sum\nolimits_{\alpha \in \Delta }\mathfrak{g}_{\alpha }$ holds (a
direct sum). Also, $\mathfrak{a}=\mathfrak{g}_{0}\cap \mathfrak{p}$ and $%
B\left( \mathfrak{g}_{\alpha },\mathfrak{g}_{\beta }\right) =0$ for any pair $\alpha, \beta $
of different restricted roots. 
For each $\alpha \in \Delta $ define $H_{\alpha }\in \mathfrak{%
a}$ by $\alpha \left( x\right) =2B\left( x,H_{\alpha }\right) $ 
(recall that $B$ was defined as one half of the Killing form). 
Then $%
\left\{ H_{\alpha }\mid \alpha \in \Delta \right\} $ spans $\mathfrak{a}$.

If $\alpha \in \Delta $, then $-\alpha \in \Delta $ and $\mathfrak{g}%
_{-\alpha }=\vartheta \left( \mathfrak{g}_{\alpha }\right) $. Given
 $\alpha\in\Delta $, choose a nonzero vector $X_{a}\in 
\mathfrak{g}_{\alpha }$, normalized in such a way that $\left\langle
X_{\alpha },X_{\alpha }\right\rangle =1/B\left( H_{\alpha },H_{\alpha
}\right) $. Putting $Y_{\alpha }=-\vartheta \left( X_{\alpha}\right) $, one has $%
\left[ X_{\alpha },Y_{\alpha }\right] =H_{\alpha }$.

\bigskip

\noindent \textit{Proof of Theorem \ref{sub-semi}. } In each case we have to verify the hypotheses of
Proposition \ref{PropRM}. By (\ref{bracketHD}), for both (a) and (b), $%
\mathfrak{h}$ is a subalgebra.

\medskip

a) Since $\mathfrak{p}\oplus \mathfrak{m\oplus t\oplus z}$ is an orthogonal
sum with respect to $B$, we have by (\ref{bi}) that%
\begin{equation}
\mathfrak{h}^{\perp }=\left\{ \left( u+\lambda w,z,v+w\right) \mid u,v\in 
\mathfrak{m}\text{, }w\in \mathfrak{z}\text{, }z\in \mathfrak{p}\right\}
\label{hperp1}
\end{equation}%
and, calling as before $\left\Vert W\right\Vert =g\left( W,W\right) $, that%
\begin{equation*}
-\kappa \left\Vert \left( u+\lambda w,z,v+w\right) \right\Vert =B\left(
z,z\right) +2B\left( u,v\right) +2\lambda B\left( w,w\right) \text{.}
\end{equation*}%
Now, $\mathfrak{p}\oplus \mathfrak{m\oplus z}$ is an orthogonal sum, $%
\lambda \neq 0$, $\left. -\kappa B\right\vert _{\mathfrak{p}\times \mathfrak{%
p}}>0$ and $\left. B\right\vert _{\mathfrak{k}\times \mathfrak{k}}<0$.
Hence, $g$ in nondegenerate on $\mathfrak{h}^{\perp }\times \mathfrak{h}%
^{\perp }$.

Next we verify that the distribution is bracket generating. It suffices to
show that any element $\left( u-\lambda w,0,v+w\right) $ of $\mathfrak{h}$
as in (\ref{sumaDirecta}) can be expressed as a sum of elements of $\left[ 
\mathfrak{h}^{\perp },\mathfrak{h}^{\perp }\right] $ and $\mathfrak{h}%
^{\perp }$. We write%
\begin{equation*}
\left( u-\lambda w,0,v+w\right) =\left( u,0,0\right) +\left( 0,0,v\right)
+\left( \lambda \left( -w\right) ,0,-w\right) +\left( 0,0,2w\right) \text{.}
\end{equation*}%
Since $u\in \mathfrak{t}$, by (II) above, $u=\sum_{s=1}^{l}t_{s}r_{s}=%
\sum_{s=1}^{l}t_{s}\left[ x_{s},y_{s}\right] $ for some $t_{1},\dots
,t_{l}\in \mathbb{R}$ and $x_{s},y_{s}\in \mathfrak{m}$. Hence, by (\ref%
{bracketHD}), 
\begin{equation*}
\left( u,0,0\right) =\sum\nolimits_{s=1}^{l}t_{s}\left( \left[ x_{s},y_{s}%
\right] ,0,0\right) =\sum\nolimits_{s=1}^{l}t_{s}\left[ \left(
x_{s},0,0\right) ,\left( y_{s},0,0\right) \right] \text{,}
\end{equation*}%
which belongs to $\left[ \mathfrak{h}^{\perp },\mathfrak{h}^{\perp }\right] $
by (\ref{hperp1}). Similar arguments show that $\left( 0,0,v\right) \in %
\left[ \mathfrak{h}^{\perp },\mathfrak{h}^{\perp }\right] $ for $v\in 
\mathfrak{t}$.

Also, $\left( \lambda \left( -w\right) ,0,-w\right) \in \mathfrak{h}^{\perp
} $, since $w\in \mathfrak{z}$. Finally, $2w\in \mathfrak{z}\subset 
\mathfrak{k}$ $=\left[ \mathfrak{p,p}\right] $. Hence, by (\ref{bracketHD}), 
$\left( 0,0,2z\right) =\left[ \left( 0,\xi ,0\right) ,\left( 0,\eta
,0\right) \right] $ for some $\xi ,\eta \in \mathfrak{p}$ and so it belongs
to $\left[ \mathfrak{h}^{\perp },\mathfrak{h}^{\perp }\right] $ by (\ref%
{hperp1}). This completes the proof that $\mathfrak{h}^{\perp }$ is bracket
generating.

\medskip

b) We have that $g\left( \left( 0,x,0\right) ,\left( 0,y,0\right) \right)
=-\kappa B\left( x,y\right) $. So $\left. g\right\vert _{\mathfrak{h}^{\perp
}\times \mathfrak{h}^{\perp }}$ is
nondegenerate, since $\left. -\kappa B\right\vert _{\mathfrak{p}^{\perp
}\times \mathfrak{p}^{\perp }}$ is positive
definite. This implies that $\left. g\right\vert _{\mathfrak{h}^{\perp
}\times \mathfrak{h}^{\perp }}$ is nondegenerate, since $g$ is nondegenerate.

By (\ref{bi}), $\mathfrak{h}^{\perp }=\left\{ \left( u,\eta ,\zeta \right)
\mid u,\zeta \in \mathfrak{k}\text{, }\eta \in \mathfrak{l}\right\} $, where 
$\mathfrak{l}$ is the orthogonal complement of $\mathfrak{a}$ in $\mathfrak{p%
}$ with respect to $B$. Next we check that $\mathfrak{h}^{\perp }$ is
bracket generating.

Given $\left( 0,x,0\right) \in \mathfrak{h}$, that is, $x\in \mathfrak{a}$,
it suffices to find $u\in \mathfrak{k}$ and $y\in \mathfrak{l}$ such that $%
\left[ u,y\right] =x$. Indeed, in that case, by (\ref{bracketHD}), 
\begin{equation*}
\left[ \left( u,0,0\right) ,\left( 0,y,0\right) \right] =\left( 0,\left[ u,y%
\right] ,0\right) =\left( 0,x,0\right) \in \mathfrak{h}\text{,}
\end{equation*}%
with $\left( u,0,0\right) ,\left( 0,y,0\right) $ both in $\mathfrak{h}%
^{\perp }$.

We use the properties of the restricted roots recalled above, in (III).
Since the vectors $H_{\alpha }$ span $\mathfrak{a}$, we may suppose without
loss of generality that $x=-\frac{1}{2}H_{\alpha }$ for some $\alpha \in
\Delta $. We take $X_{a}$ as in (III) and write $X_{\alpha }=x_{\alpha
}+z_{\alpha }$, with $x_{\alpha }\in \mathfrak{p}$ and $z_{\alpha }\in 
\mathfrak{k}$. Then we have $Y_{\alpha }=-x_{\alpha }+z_{\alpha }$ and 
\begin{equation*}
H_{\alpha }=\left[ x_{\alpha }+z_{\alpha },-x_{\alpha }+z_{\alpha }\right] =%
\left[ x_{\alpha },z_{\alpha }\right] -\left[ z_{\alpha },x_{\alpha }\right]
=2\left[ x_{\alpha },z_{\alpha }\right] \text{.}
\end{equation*}

Let us see that $u=z_{\alpha }$ and $y=x_{\alpha }$ satisfy the required
conditions. In fact, $\left[ u,y\right] =\left[ z_{\alpha },x_{\alpha }%
\right] =-\frac{1}{2}H_{\alpha }=x$. It remains to show that $x_{\alpha }\in 
\mathfrak{l}$, that is, that $B\left( \mathfrak{a},x_{\alpha }\right) =0$.
Given $\xi \in \mathfrak{a}$, we have $B\left( \xi ,x_{\alpha }\right)
=B\left( \xi ,x_{\alpha }+z_{\alpha }\right) $ since $B\left( \mathfrak{p,k}%
\right) =0$. Now, on the one hand, $\xi \in \mathfrak{g}_{0}$, since $%
\mathfrak{a}=\mathfrak{g}_{0}\cap \mathfrak{p}$. On the other hand, $%
x_{\alpha }+z_{\alpha }\in \mathfrak{g}_{\alpha }$. Thus, $B\left( \xi
,x_{\alpha }\right) =0$, as desired, since we know that $B\left( \mathfrak{g}%
_{\alpha },\mathfrak{g}_{\beta }\right) =0$ if $\alpha \neq\beta  $ and in
particular, $B\left( \mathfrak{g}_{\alpha },\mathfrak{g}_{0}\right) =0$ if $%
\alpha \in \Delta $. \hfill $\square $

\begin{remark} \upshape
The space $\left( n_{\mathbb{R}},\mathcal{D}\right) $ presented in Section %
\ref{Stiefel} is a sub-Riemannian model space in the sense of Grong \cite%
{Grong}. In Section 5.2 of that paper, a canonical sub-Riemannian structure
on the bundle $F^{O}\left( \mathcal{D}\right) $ of orthonormal frames of a
model space $\left( M,\mathcal{D}\right) $ is introduced, provided the
homology of a certain canonical partial connection is not trivial. We
comment that this is not the case for $\operatorname{St}\left( n_{\mathbb{R}},%
\mathcal{D}\right) =F^{\text{O}}\left( \mathcal{D}\right) $, by Proposition
3.7 in \cite{Grong}. In particular, the structures on $\operatorname{Osc}\left(
SO_{1+m},SO_{m}\right) $ (which can be identified with $\operatorname{St}\left( n_{%
\mathbb{R}},\mathcal{D}\right) $ by Proposition \ref{PropStiefel}) defined
in Theorem \ref{sub-semi} are not related to \cite{Grong}.
\end{remark}

\section{Appendix 
	\label{Appendix}}

\subsection{Hamiltonian geodesics} Let $M$ be a smooth manifold with
a Hamiltonian $H$, that is, a smooth function $H:T^{\ast }M\rightarrow 
\mathbb{R}$. Given $p\in M$ and $\alpha _{o}\in T_{p}^{\ast }M$, there is distinguished
smooth curve $\sigma :I\rightarrow M$, defined on an open interval $I$,
associated with $\alpha _{o}$, called the Hamiltonian geodesic of $\left(
M,H\right) $ with initial covector $\alpha _{o}$. It is the curve $\pi \circ
\alpha :I\rightarrow M$, where $\pi :T^{\ast }M\rightarrow M$ is the
canonical projection and $\alpha :I\rightarrow T^{\ast }M$ is
the maximal integral curve of the symplectic gradient vector field sgrad$%
\left( H\right) $ of $H$, with $\alpha \left( 0\right) =\alpha _{o}$.

We recall the definition of the symplectic gradient: The cotangent bundle of 
$M$ has a canonical symplectic structure $\Omega =d\theta $, where $\theta $
is the 1-form on $T^{\ast }M$ given by $\theta \left( \eta \right) =\beta
\left( d\pi _{\beta }\left( \eta \right) \right) $, if $\eta \in T_{\beta
}\left( T^{\ast }M\right) $. Then, sgrad$\left( H\right) $ is the unique
vector field $\xi $ on $T^{\ast }M$ such that $\Omega _{\beta }\left( \eta
,\xi \right) =dH_{\beta }\left( \eta \right) $ for all $\beta \in T^{\ast }M$
and $\eta \in T_{\beta }\left( T^{\ast }M\right) $.

\subsection{Sub-Riemannian and sub-Lorentzian geometry}
Let $\mathcal{D}$ be a bracket generating distribution on a
manifold $M$, that is, for every $p\in M$ there exist vector fields defined
on an open neighborhood of $p$ such that evaluated at $p$ form a basis of $%
T_{p}M$ and that all their successive commutators evaluated at $p$ span $%
T_{p}M$. A sub-Riemannian structure on $\left( M,\mathcal{D}\right) $ is an
assignment $g$ of a positive definite inner product $g_{p}$ on each subspace 
$\mathcal{D}_{p}$, varying smoothly with $p\in M$.

The inner product $g_{p}$ may be equivalently presented as a liner map $%
b_{p}:\left( T_{p}M\right) ^{\ast }\rightarrow T_{p}M$ with $b^{\ast }=b$
and image $\mathcal{D}_{p}$, through the relationship 
\begin{equation*}
g_{p}\left( b_{p}\left( \alpha \right) ,b_{p}\left( \beta \right) \right)
=\alpha \left( b_{p}\left( \beta \right) \right) =\beta \left( b_{p}\left(
\alpha \right) \right)
\end{equation*}%
for all $\alpha ,\beta \in \left( T_{p}M\right) ^{\ast }$ (the hypotheses on 
$b_{p}$ ensure that $g_{p}$ is well defined). The Hamiltonian associated
with $b$ is then the function $H:\left( T^{\ast }M\right) \rightarrow 
\mathbb{R}$, $H\left( \alpha \right) =\frac{1}{2}\alpha \left( b\left(
\alpha \right) \right) $. Hamiltonian geodesics are called normal geodesics
as well.

If $M$ is a Lie group $G$ with Lie algebra $\mathfrak{g}$ 
and the sub-Riemannian structure $\left( G,%
\mathcal{D},g\right) $ is left-invariant, then $T^{\ast }G$ may be naturally
identified with $G\times \mathfrak{g}^{\ast }$
 and the Hamiltonian turns out to be invariant: $H:T^{\ast
}G\equiv G\times \mathfrak{g}^{\ast }\rightarrow \mathbb{R}$, $H\left(
p,\alpha \right) =\frac{1}{2}\alpha \left( b_{e}\left( \alpha \right)
\right) $. Under these conditions, given $\alpha
_{o}\in \mathfrak{g}^{\ast }$, the Hamiltonian geodesic $\sigma $ with
initial covector $\alpha_{o}$ is the unique smooth curve in $G$ such that the curve $%
\left( \sigma ,\alpha \right) $ in $G\times \mathfrak{g}^{\ast }\equiv
T^{\ast }G$ is the maximal solution of the equation system%
\begin{equation*}
\sigma ^{\prime }\left( s\right) =dL_{\sigma \left( s\right) }\left( b\left(
\alpha \left( s\right) \right) \right) \text{\ \ \ \ \ \ \ and \ \ \ \ \ }%
\alpha ^{\prime }\left( s\right) =\left( \text{$\operatorname{ad}$}_{b_{o}\left(
\alpha \left( s\right) \right) }\right) ^{\ast }\alpha \left( s\right) \text{%
,}
\end{equation*}%
with initial conditions $\sigma \left( 0\right) =e$ and $\alpha \left(
0\right) =\alpha _{o}$.

The length of an admissible curve $\sigma :\left[ a,b\right] \rightarrow N$
is defined by $\text{length}\left( \sigma \right) =\int_{a}^{b}\sqrt{g\left(
\sigma ^{\prime }\left( t\right) ,\sigma ^{\prime }\left( t\right) \right) }%
~dt$. For arbitrary $p,q\in M$, the expression 
\begin{equation*}
d\left( p,q\right) =\inf \left\{ \text{length}\left( \sigma \right) \mid
\sigma \text{ is a piecewise admissible curve joining }p\text{ with }%
q\right\}
\end{equation*}%
defines a distance on $M$. It is well-known that a Hamiltonian geodesic
 $\gamma $ of $M$ minimizes the length locally,
that is, if $c<d$ in the domain of $\gamma $ are close enough, then the
length of $\left. \gamma \right\vert _{\left[ c,d\right] }$ equals the
distance between $\gamma \left( c\right) $ and $\gamma \left( d\right) $.
Also, that there may exist constant speed locally length minimizing curves
in $M$ which are not normal geodesics.

The case when the inner product $g$ on $\mathcal{D}$ is no longer positive
definite, but still nondegenerate, is also interesting. As far as we know, the
sub-Lorentzian case (when the signature is $\left( 1,k\right) $ for some $%
k\in \mathbb{N}$) has been first addressed by M.\ Grouchiness in \cite{Grocho}%
. Proposition 4.1 there states that time-like Hamiltonian geodesics (an
admissible curve $\sigma $ is time-like if $g\left( \sigma ^{\prime },\sigma
^{\prime }\right) <0$) are locally length maximizing, a property shared with
time-like Lorentzian geodesics (the length of a time-like curve is defined
as in the sub-Riemannian case, but changing the sign of the radicand). Other
articles on sub-Lorentzian geometry are \cite{Chang, KorolkoSR,
GrongVasilevJGM, SachkovSachkova, LokuPodo}.

Metrics on bracket generating distributions with signature $\left( \ell
,k\right) $ with $1<\ell \leq k$ appear naturally, for example, when
studying the rolling motion of one pseudo-Riemannian manifold over another,
as in \cite{Leite}. Articles devoted to this structure, called
sub-semi-Riemannian or sub-pseudo-Riemannian, are, for instance, \cite{GM-K, GrochoSPsR}.

\smallskip

In this paper the explicit form of normal geodesics of $\operatorname{Osc}\left(
\kappa \right) $ and $\operatorname{Osc}\left( G,K\right) $ are obtained using one
the simplest results for this purpose, that has been exploited extensively,
and of which we provide new applications:

\begin{proposition}
\label{PropRM}\emph{\cite{RM,Brockett}} Let $G$ be a Lie group endowed with
a bi-invariant pseudo-Rieman\-nian metric $g$. Let $\mathfrak{h}$ be a
subalgebra of $\mathfrak{g}$, the Lie algebra of $G$. Suppose that $%
\mathfrak{d}=\mathfrak{h}^{\perp }$ is bracket generating and the metric on $%
\mathfrak{d}$ is nondegenerate. Then all the normal geodesics of the
sub-semi-Riemannian manifold $\left( G,\mathfrak{d},\left. g\right\vert _{%
\mathfrak{d}\times \mathfrak{d}}\right) $ through the identity have the form 
$t\mapsto \exp \left( t\left( A+B\right) \right) \exp \left( -tB\right) $,
with $A\in \mathfrak{d}$ and $B\in \mathfrak{h}$.
\end{proposition}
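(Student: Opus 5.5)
The plan is to reduce the left-invariant Hamiltonian system to an Euler--Arnold equation on $\mathfrak{g}$ and exploit bi-invariance, as in the standard argument of \cite{RM,Brockett}.

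\textbf{Step 1: the reduced system.} Since $g$ is bi-invariant and nondegenerate, I use it to identify $\mathfrak{g}^{\ast}\cong\mathfrak{g}$. Because $g$ is nondegenerate on $\mathfrak{d}$, it is also nondegenerate on $\mathfrak{h}$, so $\mathfrak{g}=\mathfrak{d}\oplus\mathfrak{h}$ is a $g$-orthogonal direct sum. Let $P$ be the orthogonal projection onto $\mathfrak{d}$. Under this identification, the map $b$ of the Appendix becomes $P$, and the Hamiltonian is $H(p,X)=\tfrac12 g(PX,PX)$. By the left-invariant form of the Hamiltonian equations recalled in the Appendix, the normal geodesics through $e$ are the curves $\sigma$ with $\sigma(0)=e$ and
\[
\sigma'=dL_{\sigma}(PX),
\]
where $X(s)\in\mathfrak{g}$ solves the coadjoint equation. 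Using ad-invariance of $g$, the coadjoint equation becomes $X'=\pm[PX,X]$, with the sign fixed by the convention for $(\operatorname{ad})^{\ast}$; I will check the sign against the proof of Theorem~\ref{geoSubN}, where the same computation gave $Z'=[u,Z]$.

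\textbf{Step 2: solve the Euler equation.} Write $X=A(s)+B(s)$ with $A\in\mathfrak{d}$ and $B\in\mathfrak{h}$. Then $[PX,X]=[A,A+B]=[A,B]$, so $A'+B'=\pm[A,B]$. The key point is that $\mathfrak{h}$ is a subalgebra and $g$ is ad-invariant, which gives $[\mathfrak{h},\mathfrak{d}]\subset\mathfrak{d}$: indeed, for $h,h'\in\mathfrak{h}$ and $d\in\mathfrak{d}$ we have $g([h,d],h')=-g(d,[h,h'])=0$. Hence $[A,B]\in\mathfrak{d}$, and therefore $B'=0$. So $B$ is a constant $B_0$, and $A'=\pm[A,B_0]$, which gives $A(s)=\operatorname{Ad}(e^{\mp sB_0})A_0$.

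\textbf{Step 3: integrate $\sigma$.} With the correct sign, $A(s)=\operatorname{Ad}(e^{sB_0})A_0$. I propose the candidate $\sigma(s)=\exp(s(A_0+B_0))\exp(-sB_0)$ and check it directly. Differentiating,
\[
\sigma^{-1}\sigma'=\operatorname{Ad}(e^{sB_0})(A_0+B_0)-B_0=\operatorname{Ad}(e^{sB_0})A_0 ,
\]
using $\operatorname{Ad}(e^{sB_0})B_0=B_0$. This equals $PX(s)$, so the candidate satisfies the geodesic equation. Uniqueness of solutions of the ODE then identifies $\sigma$ with the normal geodesic. Since every initial covector corresponds to some pair $A_0\in\mathfrak{d}$, $B_0\in\mathfrak{h}$, all normal geodesics through $e$ have the stated form.

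\textbf{Expected obstacle and role of the hypotheses.} The main delicate point is the sign convention in the coadjoint equation, since it decides whether the formula is $\exp(s(A+B))\exp(-sB)$ or its reversed variant. I would settle this by comparison with Theorem~\ref{geoSubN}. The bracket-generating hypothesis is not used in the computation itself; it only ensures that the structure is a genuine sub-semi-Riemannian one.
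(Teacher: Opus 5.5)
Your proof is correct. The paper gives no proof of this proposition. It quotes \cite{RM} and \cite{Brockett}, which assume a positive definite metric and a closed subgroup with Lie algebra $\mathfrak{h}$, and cites \cite{Srdjan} for the fact that these assumptions can be dropped.

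Your argument fills that gap directly. You identify $\mathfrak{g}^{\ast}\cong\mathfrak{g}$ via $g$, so that $b$ becomes the orthogonal projection $P$ onto $\mathfrak{d}$. Ad-invariance plus the subalgebra property give $[\mathfrak{h},\mathfrak{d}]\subset\mathfrak{d}$, hence $B$ is constant and $A(s)=\operatorname{Ad}(e^{sB_0})A_0$. You then check $\sigma^{-1}\sigma'=\operatorname{Ad}(e^{sB_0})A_0$. The argument is purely Lie-algebraic, so it proves exactly the generality the paper needs. It uses only nondegeneracy of $g$ on $\mathfrak{g}$ and on $\mathfrak{d}$, which gives $\mathfrak{g}=\mathfrak{d}\oplus\mathfrak{h}$ with $\mathfrak{h}=\mathfrak{d}^{\perp}$. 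It never uses definiteness or closedness of a subgroup.

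The same result can be phrased differently: $H=\tfrac12 g(X,X)-\tfrac12 g(X_{\mathfrak{h}},X_{\mathfrak{h}})$, and the first term is a Casimir, so the two terms Poisson commute. The flow is then the composition of the bi-invariant geodesic flow $s\mapsto\exp(sX_0)$ with right translation by $\exp(-sB_0)$. This explains the product form, but your direct verification avoids having to justify it.

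Two minor remarks. First, with the Appendix convention $\alpha'(W)=\alpha([b(\alpha),W])$ and $\alpha=g(X,\cdot)$, ad-invariance gives $X'=[X,PX]$, so $A'=[B_0,A]$; this is the sign you chose. Second, the sign is not really an obstacle for the statement as formulated. The opposite convention would produce $\exp(s(A_0-B_0))\exp(sB_0)$, which has the required form with $-B_0\in\mathfrak{h}$ in place of $B_0$.

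You are also right that bracket generation plays no role in the computation. It is only needed for the structure to qualify as sub-semi-Riemannian.
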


This is Proposition 11.19 in \cite{RM} or the main result of \cite{Brockett}%
. The hypotheses in those references are more restrictive, namely, that the
structure is sub-Riemannian and the Lie subgroup with Lie algebra $\mathfrak{%
k}$ is closed. However, one can see in \cite{Srdjan} (although it deals mainly with compact Lie algebras) 
that those requirements are not necessary. As
far as we know, this recent paper contains the most general results in the
spirit of Proposition \ref{PropRM}.

\bigskip

\noindent Mauricio Godoy Molina; mauricio.godoy@ufrontera.cl

\noindent Departamento de Matem\'atica y Estad\'{\i}stica, Universidad de la Frontera

\noindent Avenida Francisco Salazar 01145, Temuco, Chile

\medskip 

\noindent Marcos Salvai; marcos.salvai@unc.edu.ar

\noindent \textsc{famaf} (Universidad Nacional de C\'ordoba) and \textsc{ciem} (\textsc{conicet})

\noindent Ciudad Universitaria, (\textsc{x5000hua}) C\'{o}rdoba, Argentina


\begin{thebibliography}{99}
\bibitem{AgrachevLibro} Gorbachev, Andrei;  Barilari, Davide; Boscain. Ugo. A
Comprehensive Introduction to Sub-Riemannian Geometry. Vol. 181. Cambridge
Studies in Advanced Mathematics. Cambridge University Press, Cambridge, 2020.

\bibitem{AgrachevAbnormal} Gorbachev, A. ;  Sachkov, Yu. Control theory from the
geometric viewpoint. Encyclopedia of Mathematical Sciences, 87. Control
Theory and Optimization, II. Springer-Verlag, Berlin, 2004.

\bibitem{Biggs} Biggs, Rory; Remsing, Claudiu C. Some remarks on the
oscillator group. Differ. Geom. Appl. 35, Suppl., 199-209 (2014).

\bibitem{BiggsNagy} Biggs, R.; Nagy, P\'{e}ter T. On extensions of
sub-Riemannian structures on Lie groups. Differ. Geom. Appl. 46, 25-38
(2016).

\bibitem{Brockett} Brockett, R.W. Explicitly solvable control problems with
nonholonomic constraints. In Decision and control, 1999, Proc. 38th IEEE
Conf. on, vol. 1, pp. 13--16, IEEE, 1999.

\bibitem{SachkovSol} Butt, Yasir Awais; Sachkov, Yuri L.; Bhatti, Aamer
Iqbal. Cut locus and optimal synthesis in sub-Riemannian problem on the Lie
group SH(2). J. Dyn. Control Syst. 23, No. 1, 155-195 (2017).

\bibitem{Chang} Chang, Der-Chen; Markina, Irina; Vasil'ev, Alexander.
Sub-Lorentzian geometry on anti-de Sitter space. J. Math. Pures Appl. (9)
90, No. 1, 82-110 (2008).

\bibitem{LinoGrama} Da Silva, A.; Grama, L.; Novaes, D.D.; Tusco, M.Q. Maxwell Strata in the sub-Riemannian problem on solvable,
nonnilpotent regular three-dimensional Lie groups (2026). arXiv preprint
arXiv:2602.17782.

\bibitem{NorbertDorr} D\"{o}rr, N. A note on the oscillator group.
Semin. Sophus Lie 2, No. 1, 31-38 (1992).

\bibitem{Favre} Favre, G.; Santharoubane, L. J. Symmetric, invariant,
nondegenerate bilinear form on a Lie algebra, J. Algebra 105 (1987),
451--464.

\bibitem{Claudio} Falbel, Elisha; Gorodski, Claudio. On contact
sub-Riemannian symmetric spaces. Ann. Sci. \'{E}c. Norm. Sup\'{e}r. (4) 28,
No. 5, 571-589 (1995).

\bibitem{FVV} Falbel, E.; Veloso, J. M.; Verderesi, J. A. Constant
curvature models in sub-Riemannian geometry. Mat. Contemp. 4, 119-125 (1993).

\bibitem{GM-K} Godoy Molina, Mauricio; Korolko, Anna; Markina, Irina.
Sub-semi-Riemannian geometry of general H-type groups. Bull. Sci. Math. 137,
No. 6, 805-833 (2013).

\bibitem{Grocho}  Grochowski, M. Geodesics in the sub-Lorentzian geometry,
Bull. Polish Acad. Sci. 50 (2) (2002).

\bibitem{GrochoSPsR} Grochowski, Marek; Kry\'{n}ski, Wojciech. On contact
sub-pseudo-Riemannian isometries. ESAIM, Control Optim. Calc. Var. 23, No.
4, 1751-1765 (2017).

\bibitem{Grong} Grong, Erlend. Model spaces in sub-Riemannian geometry.
Commun. Anal. Geom. 29, No. 1, 77-113 (2021).

\bibitem{GrongVasilevJGM} Grong, Erlend; Vasil'ev, Alexander. Sub-Riemannian
and sub-Lorentzian geometry on SU(1,1) and on its universal cover. J. Geom.
Mech. 3, No. 2, 225-260 (2011).

\bibitem{Srdjan}  Jovanovi\'c, Bo\v{z}idar;  \v{S}ukilovi\'c, Tijana; Vukmirovi\'c, Srjdan. 
Normal sub-Riemannian geodesics related to filtrations of Lie algebras,
The Journal of Geometric Analysis, Vol. 36, No. 6: 215, 2026.

\bibitem{Kath}  Kath, Ines; Lyko, Matti. Pseudo-Riemannian symmetric spaces of signature (2,2).
Transform. Groups 31 (2026), no. 2, 1701-1751

\bibitem{KorolkoSR} Korolko, Anna; Markina, Irina. Nonholonomic Lorentzian
geometry on some H-type groups. J. Geom. Anal. 19, No. 4, 864-889 (2009).

\bibitem{LibroLeDonne} Le Donne, Enrico. Metric Lie groups. Carnot-Carath%
\'{e}odory spaces from the homogeneous viewpoint. Graduate Texts in
Mathematics 306. Cham: Springer. xvi, 480 p. (2025).

\bibitem{LokuNnS} Ladeishchikov, Evgeny A.; Lokutsievskiy, Lev V.; Prilepin, Nikita V.
Explicit formulae for extremals in sub-Lorentzian and Finsler problems on 2D
and 3D Lie groups. 
Sb. Math. 216, No. 12, 1713-1753 (2025); translation from Mat. Sb. 216, No.
12, 79-124 (2025).

\bibitem{LokuPodo} Lokutsievskiy, L. V.; Podobryaev, A. V., Existence
theorem for sub-Lorentzian problems. 
J. Dyn. Control Syst. 30, No. 2, 1-12 (2024).

\bibitem{Leite} Markina, Irina; Leite, F\'{a}tima Silva. Introduction to the
intrinsic rolling with indefinite metric. Commun. Anal. Geom. 24, No. 5,
1085-1106 (2016).

\bibitem{Medina} Medina A.; Revoy, P. Alg\`{e}bres de Lie et produit
scalaire invariant. Ann. Sci. Ecole Norm. Sup. (4), 18:553--561, 1985.

\bibitem{Myani} Myasnichenko, Oleg. Nilpotent $\left( n,n\left( n+1\right)
/2\right) $ sub-Riemannian problem.J. Dyn. Control Syst. 12, No. 1, 87-95
(2006).

\bibitem{Monroy} Monroy-P\'{e}rez, F.; Anzaldo-Meneses, A. The step-2
nilpotent $\left( n,n\left( n+1\right) /2\right) $, sub-Riemannian geometry.
J. Dyn. Control Syst. 12, No. 2, 185-216 (2006).

\bibitem{AnnamriaCruz} Montanari, Annamaria; Morbidelli, Daniele. On the
subRiemannian cut locus in a model of free two-step Carnot group. Calc. Var.
Partial Differ. Equ. 56, No. 2, Paper No. 36, 26 p. (2017).

\bibitem{Gabriela} Montenegro, P.R.; Ovando, G.P. 
Closed Geodesics and Isometries on Compact Lorentz Solvmanifolds.
Mathematische Nachrichten 299, no. 7 (2026): 1716–1736.

\bibitem{RM} Montgomery, Richard. A Tour of Subriemannian Geometries,
Their Geodesics and Applications. Vol. 91. Mathematical Surveys and
Monographs. American Mathematical Society, Aug. 2002, pp. xx+259. DOI:
10.1090/surv/091.


\bibitem{OvandoTorino} Ovando, G.P. Lie algebras with ad-invariant metrics.
A survey - guide. Rend. Semin. Mat., Univ. Politec. Torino, 74(1):243--268,
2016.

\bibitem{RizziCut} Rizzi, Luca; Serres, Ulysse. On the cut locus of free,
step two Carnot groups. Proc. Am. Math. Soc. 145, No. 12, 5341-5357 (2017).

\bibitem{SachkovSurveyLie} Sachkov, Yu. L. Control theory on Lie groups. J.
Math. Sci., New York 156, No. 3, 381-439 (2009); translation from Sovrem.
Mat. Fundam. Napravl. 27, 5-59 (2007).

\bibitem{SachkovSE2} Sachkov, Yuri L. Cut locus and optimal synthesis in the
sub-Riemannian problem on the group of motions of a plane. ESAIM, Control
Optim. Calc. Var. 17, No. 2, 293-321 (2011).

\bibitem{SachkovSachkova} Sachkov, Yuri L.; Sachkova, Elena F.
Sub-Lorentzian distance and spheres on the Heisenberg group. J. Dyn. Control
Syst. 29, No. 3, 1129-1159 (2023).

\bibitem{Streater} Streater, R.F. The representations of the
oscillator group. Commun. Math. Phys. 4, 217-236 (1967).
\end{thebibliography}
\end{document}